\theoremstyle{theorem}
\newtheorem{theorem}{Theorem}[section]
\newtheorem{corollary}[theorem]{Corollary}
\newtheorem{lemma}[theorem]{Lemma}
\newtheorem{proposition}[theorem]{Proposition}
\newtheorem{definition}{Definition}[section]
\numberwithin{equation}{section}
\title{Self-dual Yang-Mills Equations in  Split Signature}
 \date{\today}
\author{Masood Aryapoor\footnote{masood.aryapoor@yale.edu}\\ \\
Mathematics Department\\
Yale University\\
442 Dunham Lab\\
10 Hillhouse Avenue\\
New Haven, CT 06511 USA\\}
\begin{document}  
\maketitle
\begin{abstract}
We study the self-dual Yang-Mills equations in split signature. We give a special solution, called the basic split instanton, and describe the ADHM construction in the split signature. Moreover a split version of t'Hooft ansatz is described. 
\end{abstract}
\begin{section}{Introduction}
Self-dual Yang-Mills equations (SDYM for short) are well-known equations in dimension $4$.  They were introduced in the last century and  many beautiful applications of them  in other areas of mathematics have been found  since then, for their applications in four dimensional geometry see \cite{DK,FU}.  \\
These equations are defined using a metric on a four dimensional manifold. It turns out that there are two possible choices for the signature of the metric to obtain real valued solutions, namely Euclidean and split signatures.  
Much of the research has been focused on the Euclidean case. In this paper we would like to study these equations in the split signature, for their applications  in integrable systems see \cite{MW}.\\  
 The starting point for us to study the split SDYM equations is the existence of a very special solution in the split signature which is quite similar to  the basic instanton.  The existence of this solution shows that the split SDYM equations might be as important as they are in the Euclidean case.  On the other hand the split SDYM equations  are not as rigid as the ones in the Euclidean signature. More precisely, 
as it is well-known, the SDYM equations in the Euclidean signature are of elliptic type which confirms the finite dimensionality of the moduli space. In contrast to the Euclidean signature, in the split signature, the equations are not of elliptic type and one cannot hope to have a finite dimensional moduli space of solutions. In fact L.J. Mason has recently shown that there is a one to one  correspondence between SDYM solutions in the split signature over $S^2\times S^2$ and certain data on the complex projective space $\mathbb{C}\mathbb{P}^3$, see \cite{Ma}. The data consists of a holomorphic and a smooth part which shows that the moduli space cannot be finite dimensional. Therefore in the split case, we have a lot more freedom. But in order to obtain a finite dimensional moduli space of solutions in the split case we have to impose some conditions on the set of solutions. For the simplest case, i.e. when the structure group is $O(2)$ and the "charge" is $\pm 1$ we introduce an extra condition on the solutions. With this extra condition, the moduli space becomes isomorphic to $SO_0(3,3)/SO(3)\times SO(3)$ in parallel with  the classical result that the moduli space of instantons of charge 1 is isomorphic to $SO_0(5,1)/SO(5)$. \\
Another analogy between these two signatures is the existence of Atiyah-Drinfeld-Hitchin-Manin (ADHM for short) construction in the split case. It is well-known that all the solutions of SDYM equations on $S^4$ are given by the ADHM construction which is basically an algebraic construction, see \cite{ADHM}. In fact it is easy to show that this construction produces solutions but it is much harder to prove that the ADHM construction yields all the solutions.
It turns out that  there is an analogous construction for the solutions of SDYM equations in the split signature. More precisely we give a construction similar to the ADHM construction which produces solutions of  SDYM equations on  the conformal compactification of $\mathbb{R}^{2,2}$ with the split signature metric $ds^2=dx_1^2+dx_2^2-dx_3^2-dx_4^2$. \\  
Another motivation to study the split SDYM equations comes from Representation Theory.  One can use the moduli space of instantons to realize a family of  representations of certain infinite dimensional Lie algebras geometrically, see \cite{Na} and references therein. 
 However, for this application, one has to "compactify" the moduli space of instantons in a suitable manner and consider the so-called "ideal" instantons. It would be nice to realize ideal instantons  as genuine solutions! Because there is no room in the Euclidean picture, one is led to consider  the split SDYM equations. As we will see, there is a family of $O(2)$-SDYM solutions in the split case whereas there is no non-trivial Euclidean $U(1)$-instanton. \\
Here is an outline of the paper . In the first part,    we first review the construction of the basic instanton and then we introduce the basic split instanton. Finally we explain a close relation between these two special solutions in Euclidean and split signatures.\\
In the second part, we deal with the moduli problem in the split case.  We show that the whole moduli space of $O(2)$-SDYM solutions of topological charge $1$ is infinite dimensional. But if we just consider the solutions which have a large symmetry group, then the restricted moduli space is finite dimensional and is isomorphic to $SL(4,\mathbb{R})/SO(4)$.\\ 
 In the third part, we introduce the split t'Hooft ansatz. Historically, the first SDYM solutions in the Euclidean signature were given by the so-called t'Hooft ansatz. This ansatz starts with a solution $f$ of the Laplacian equation on some region in $S^4$ and construct a solution of the Euclidean SDYM equations, say $A$. A priori, the solution $A$ is nonsingular on the same region, but it could happen that the solution $A$ has less singularity than $f$. In fact since there is no nontrivial solution to the Laplacian equation on the 4-sphere, the only way to construct global solutions from the t'Hooft ansatz is to start from the local solutions of  the Laplacian equation and hope that the ansatz gives a global solution. For this reason we refer to this ansatz as the local t'Hooft ansatz. 
We show that we also have a version of the t'Hooft ansatz in the split signature as well. One only needs to start from a solution to the ultra-hyperbolic equation. Since in the split signature there is plenty of global solutions, given any global solution of the ultra-hyperbolic equation,    we can construct a global ASDYM solution. We call this ansatz, the global t'Hooft ansatz. The global t'Hooft ansatz only produces $GL(2,\mathbb{R})$-ASDYM solutions. Some of these solutions are in fact $O(2)$ solutions. We also show that, all the anti-instantons (defined in part two) can be obtained via the global t'Hooft ansatz.\\
Finally, in the last part, we present the split ADHM construction. It turns out that there is a complex version of the ADHM construction as well. The complex ADHM construction gives rise to holomorphic vector bundles on $Gr(2,\mathbb{C}^4)$. It is an analog of "monad" construction of holomorphic vector bundles on complex projective spaces, see \cite{OS}.  \\ \\
\noindent \textbf{Acknowledgment:} I would like to thank professor Igor Frenkel without whom this paper would not be done. The author is very grateful to him for suggesting this project, supports, encouragement  and 
very useful discussions. I would also like to thank \mbox{Professors} M. Kapranov and G. Zuckerman for helpful and informative conversations.

\end{section} 

\begin{section}{Basic Euclidean and Split Instantons}
 There is a very special $SU(2)$-SDYM solution on $S^4$ which is called the basic instanton.   It has a very nice description using the algebra of quaternions. It turns out that there is a special $O(2)$-SDYM solution in the split case as well which is quite similar to the basic instanton. It also has a very nice description in terms of the algebra of split quaternions. We call this solution the basic split instanton by analogy. In this section we review the construction of the basic instanton and describe the basic split instanton. Moreover we show that there is a surprising relation between the basic instanton and the basic split anti-instanton by passing to the complex picture.
\begin{subsection}{Preliminaries}
By the $\mathbb{C}$-algebra of complex quaternions, denoted by $\mathbb{H}_{\mathbb{C}}$, we just mean the $\mathbb{C}$-algebra of two by two complex matrices. So
$$\mathbb{H}_{\mathbb{C}}:=\{ Z=\begin{pmatrix}
z_{11} & z_{12}\\
z_{21} & z_{22}
\end{pmatrix} | z_{ij}\in\mathbb{C} \} $$
Given $Z=\begin{pmatrix}
z_{11} & z_{12}\\
z_{21} & z_{22}\end{pmatrix}$, we set
$$Z^t:=\begin{pmatrix}
z_{11} & z_{21}\\
z_{12} & z_{22}\end{pmatrix}$$
$$\tilde{Z}:=\begin{pmatrix}
z_{22} & -z_{12}\\
-z_{21} & z_{11}\end{pmatrix}$$
$$Z^*:=\begin{pmatrix}
\bar{z}_{11} & \bar{z}_{21}\\
\bar{z}_{12} & \bar{z}_{22}\end{pmatrix}$$
Identifying  $\mathbb{H}_{\mathbb{C}}$ with $\mathbb{C}^4$ as a complex manifold,  we consider the following (holomorphic) metric and volume form on $\mathbb{H}_{\mathbb{C}}$
$$ds^2:=2(dz_{11}dz_{22}-dz_{12}dz_{21})=2\det{dZ}$$
$$dV=dz_{11}\wedge dz_{21}\wedge dz_{12}\wedge dz_{22}$$
 Therefore we have the Hodge $*$-operator 
$$*:\Omega^2(\mathbb{H}_{\mathbb{C}})\to\Omega^2(\mathbb{H}_{\mathbb{C}})$$
where $\Omega^n(\mathbb{H}_{\mathbb{C}})$ is the sheaf of holomorphic $n$-forms on $\mathbb{H}_{\mathbb{C}}$. We recall that for 2-forms $\alpha$ and  $ \beta$ we have
$$\alpha\wedge *\beta=(\alpha,\beta)dV$$
It is easy to see that $*^2=1$.
A 2-form $\omega$ on $\mathbb{H}_{\mathbb{C}}$ is called self-dual (or SD for short) if $*\omega=\omega$ and it is called anti-self-dual (or ASD) if $*\omega=-\omega$.
The space of SD 2-forms is generated by 
$$dz_{11}\wedge dz_{21},dz_{12}\wedge dz_{22},  dz_{11}\wedge dz_{22}+dz_{12}\wedge dz_{21}$$
and the  space of ASD 2-forms is generated by
$$dz_{11}\wedge dz_{12}, dz_{21}\wedge dz_{22},  dz_{11}\wedge dz_{22}-dz_{12}\wedge dz_{21}$$
We have the following simple algebraic lemma concerning SD and ASD 2-forms on $\mathbb{H}_{\mathbb{C}}$
\begin{lemma}\label{AL}
Let $A\in \mathbb{H}_{\mathbb{C}}$. Then \\
(a) The $\mathbb{H}_{\mathbb{C}}$-valued 2-form $dZ\wedge AdZ^t$ is SD if and only if $A=A^t$.\\
(b) The $\mathbb{H}_{\mathbb{C}}$-valued 2-form $dZ^t\wedge AdZ$ is ASD if and only if $A=A^t$.\\
(c)  The $\mathbb{H}_{\mathbb{C}}$-valued 2-form $dZ\wedge Ad\tilde{Z}$ is ASD if and only if $A\in \mathbb{C}$.\\
(d)   The $\mathbb{H}_{\mathbb{C}}$-valued 2-form $d\tilde{Z}\wedge AdZ$ is SD if and only if $A\in \mathbb{C}$.\\ 
where $dZ=\begin{pmatrix}
dz_{11} & dz_{12}\\
dz_{21} & dz_{22}\end{pmatrix}$ and the same for $dZ^t$ and $d\tilde{Z}$.
\end{lemma}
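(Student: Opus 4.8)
The plan is to prove all four parts by direct computation, using linearity in $A$ to reduce each claim to a small spanning set of $\mathbb{H}_{\mathbb{C}}$. The organizing observation is that the Hodge $*$-operator acts entrywise on a $\mathbb{H}_{\mathbb{C}}$-valued $2$-form (which is just a $2\times 2$ matrix of scalar $2$-forms), so such a form is SD (resp.\ ASD) if and only if each of its four entries is SD (resp.\ ASD). In every case I would therefore expand the entries of the matrix product in the bases of SD and ASD $2$-forms given just above the statement, and read off the condition on $A$.

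For part (a), I would split $A$ into its symmetric and antisymmetric parts, $A=A_{\mathrm{sym}}+\lambda J$ with $J=\bigl(\begin{smallmatrix}0&1\\-1&0\end{smallmatrix}\bigr)$. Writing the entries as $(dZ\wedge A\,dZ^t)_{ij}=\sum_{l,k}a_{lk}\,dz_{il}\wedge dz_{jk}$, a short computation shows that each symmetric basis matrix ($I$, $\mathrm{diag}(1,-1)$, and $\bigl(\begin{smallmatrix}0&1\\1&0\end{smallmatrix}\bigr)$) yields a purely SD entry, whereas $J$ yields a purely ASD one (for instance the $(1,1)$ entry is $(a_{12}-a_{21})\,dz_{11}\wedge dz_{12}$, and all other entries likewise carry the factor $a_{12}-a_{21}$). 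Hence the SD condition is exactly the vanishing of the antisymmetric part, i.e.\ $A=A^t$. Part (b) is handled by the identical decomposition with the roles of SD and ASD interchanged: $A_{\mathrm{sym}}$ now contributes pure ASD and $J$ contributes pure SD, so $dZ^t\wedge A\,dZ$ is ASD iff $A$ is symmetric.

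For parts (c) and (d) the relevant splitting is into scalar and traceless parts, $A=\tfrac{1}{2}\mathrm{tr}(A)\,I+A_{0}$. Using $d\tilde Z=\bigl(\begin{smallmatrix}dz_{22}&-dz_{12}\\-dz_{21}&dz_{11}\end{smallmatrix}\bigr)$, I would first verify that $dZ\wedge d\tilde Z$ is purely ASD (each entry collapses to a multiple of an ASD generator), so the scalar part of $A$ always contributes ASD. Then on the traceless basis $\bigl(\begin{smallmatrix}1&0\\0&-1\end{smallmatrix}\bigr),\bigl(\begin{smallmatrix}0&1\\0&0\end{smallmatrix}\bigr),\bigl(\begin{smallmatrix}0&0\\1&0\end{smallmatrix}\bigr)$ I would check that $dZ\wedge A_{0}\,d\tilde Z$ is purely SD and that the induced map from traceless matrices to SD $2$-forms is injective (already the three $(1,1)$ entries give three linearly independent SD generators). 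Consequently $dZ\wedge A\,d\tilde Z$ is ASD iff the traceless part vanishes, i.e.\ $A\in\mathbb{C}$; part (d) follows by the same scheme with SD and ASD exchanged.

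The computations are routine linear algebra in the exterior algebra; the only thing demanding care is the bookkeeping---fixing the convention for how the constant matrix $A$ multiplies the matrix of $1$-forms and tracking the antisymmetry signs $dz_{il}\wedge dz_{jk}=-dz_{jk}\wedge dz_{il}$---so the main (and modest) obstacle is arranging the entry-by-entry expansion efficiently rather than any conceptual difficulty. The symmetric/antisymmetric splitting in (a)--(b) and the scalar/traceless splitting in (c)--(d) are precisely what compress the four matrix entries into a single clean equivalence in each case.
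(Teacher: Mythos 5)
The paper states this lemma without proof (it is introduced only as a ``simple algebraic lemma''), so there is no argument of the author's to compare against; your entrywise expansion, with the symmetric/antisymmetric splitting of $A$ for (a)--(b) and the scalar/traceless splitting for (c)--(d), is correct and supplies exactly the verification the author left implicit. I checked the representative computations --- e.g.\ $(dZ\wedge A\,dZ^t)_{11}=(a_{12}-a_{21})\,dz_{11}\wedge dz_{12}$ is indeed a pure multiple of an ASD generator, the off-diagonal entries split into the listed SD generators plus an ASD part proportional to $a_{12}-a_{21}$, and the $(1,1)$ entries of $dZ\wedge A_0\,d\tilde Z$ on the traceless basis give the three independent SD generators --- and they come out as you claim.
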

Having defined SD and ASD 2-forms, we can consider the self-dual and anti-self-dual Yang-Mills equations (or SDYM and ASDYM equation for short) on $\mathbb{H}_{\mathbb{C}}$.  The solutions are (holomorphic) connections (defined on a holomorphic vector bundle on 
$\mathbb{H}_{\mathbb{C}}$) whose curvature is SD or ASD.  We briefly recall the notion of connection and the related concepts, for the details see \cite{KN} for example.  A connection on   a holomorphic vector bundle $V$ on $\mathbb{H}_{\mathbb{C}}$ is a $\mathbb{C}$-linear morphism 
$$\nabla:\mathcal{O}_V\to\Omega^1(V)$$
which satisfies 
$$\nabla(fs)=\partial f\otimes s+f\nabla s$$
for any holomorphic function $f$ and a holomorphic section $s$ of $V$.  Here $\mathcal{O}_V$ is the sheaf of holomorphic sections of $V$ and $\Omega^n(V)$ is the sheaf of holomorphic $V$-valued $n$-forms. In the local frame $u=(s_1,...,s_n)$ of the vector bundle $V$ of rank $n$, a connection can be written as $\partial+A$ where $A$ is an $n$ by $n$  matrix of 1-forms called the connection potential of $\nabla$ in the local frame $u$.  More precisely, for an $n$ by 1 matrix $f$ of holomorphic functions  we have
$$\nabla(uf)=u(\partial f+Af)$$
If $u_1$ is another local frame for $V$, then $u_1=ug$ for some  $g$, called a gauge transformation, which is an $n$ by $n$ matrix of holomorphic functions. Then it is easy to see that the connection potential of $\nabla$ in this new local frame is given by
$$g^{-1}\partial g+g^{-1}Ag$$
It is well-known that $\nabla$ has a natural extension
$$\nabla_1:\Omega^1(V)\to \Omega^{2}(V)$$
defined by $\nabla_1(s\otimes\alpha)=\nabla(s)\wedge\alpha+s\otimes \partial\alpha$ where $s$ is a section of $\mathcal{O}_V$ and $\alpha$ is a holomorphic 2-form. The curvature $C$ of this connection is defined to be $\nabla_1\circ \nabla$. It is easy to see  that $C$ is a bundle homomorphism and hence it defines a section of $\Omega^2(End(V))$. In the local frame $u$, the curvature is given by an $n$ by $n$ matrix of 2-forms $F$, called the connection 2-form, 
$$F=\partial A+A\wedge A$$
Moreover under the  gauge transformation $g$, $F$ is transformed to $g^{-1}Fg$.\\ 
The SDYM and ASDYM equations are $*C=C$ and $*C=-C$ respectively where    $\nabla$ is a connection defined on a vector bundle $V$ and $C$ is its curvature. If 
 $$A=A_{11}dz_{11}+A_{12}dz_{12}+A_{21}dz_{21}+A_{22}dz_{22}$$
is the connection potential of $\nabla$ in some local gauge, then the SDYM equations are   
$$F_{11,12}=F_{21,22}=F_{11,22}-F_{12,21}=0$$
where $F_{ij,kl}:=\frac{\partial{A_{kl}}}{\partial z_{ij}}-\frac{\partial{A_{ij}}}{\partial z_{kl}}+[A_{ij},A_{kl}]$. Similarly the ASDYM equations are
$$F_{11,21}=F_{12,22}=F_{11,22}+F_{12,21}=0$$
Obviously the whole discussion so far has a counterpart in the category of smooth manifolds which we use as well.
There are two real forms of $\mathbb{H}_{\mathbb{C}}$ that we are interested in. The first one is the Euclidean real form
$$\mathbb{H}:=\{ \begin{pmatrix}
z_{11} & z_{12}\\
z_{21} & z_{22}
\end{pmatrix} \in\mathbb{H}_{\mathbb{C}} |  z_{22}=\bar{z}_{11},\; z_{12}=-\bar{z}_{21} \} $$
which is the $\mathbb{R}$-algebra of quaternions. The other one is the split real form
$$\mathbb{H}_{\mathbb{R}}:=\{ \begin{pmatrix}
z_{11} & z_{12}\\
z_{21} & z_{22}
\end{pmatrix} \in\mathbb{H}_{\mathbb{C}}| z_{ij}\in\mathbb{R} \} $$
which is the $\mathbb{R}$-algebra of split quaternions. we have the following simple lemma concerning these real forms of $\mathbb{H}_{\mathbb{C}}$
\begin{lemma}\label{AL1}
Let $A\in \mathbb{H}_{\mathbb{C}}$. Then \\
(a) $A\in\mathbb{H}$ if and only if $A^*=\tilde{A}$\\
(b) $A\in \mathbb{H}_{\mathbb{R}}$ if and  only if $A^*=A^t$.
\end{lemma}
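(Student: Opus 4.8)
The plan is to prove both equivalences by a direct entry-by-entry comparison, since the three operations $A^t$, $\tilde A$, $A^*$ are all defined explicitly on the matrix entries and both real forms $\mathbb{H}$, $\mathbb{H}_{\mathbb{R}}$ are cut out by explicit conditions on those entries. Writing $A = \begin{pmatrix} a_{11} & a_{12} \\ a_{21} & a_{22} \end{pmatrix}$, the whole argument amounts to matching two $2\times 2$ matrices and reading off the resulting scalar system.

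For part (a), I would first record
$$A^* = \begin{pmatrix} \bar a_{11} & \bar a_{21} \\ \bar a_{12} & \bar a_{22} \end{pmatrix}, \qquad \tilde A = \begin{pmatrix} a_{22} & -a_{12} \\ -a_{21} & a_{11} \end{pmatrix}.$$
Equating these entry-by-entry, the identity $A^* = \tilde A$ becomes the system $\bar a_{11} = a_{22}$, $\bar a_{21} = -a_{12}$, $\bar a_{12} = -a_{21}$, $\bar a_{22} = a_{11}$. The next step is to observe that the last two equations are simply the complex conjugates of the first two, hence redundant. Therefore $A^* = \tilde A$ holds if and only if $a_{22} = \bar a_{11}$ and $a_{12} = -\bar a_{21}$, which is exactly the defining condition of the Euclidean real form $\mathbb{H}$.

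For part (b), I would compute
$$A^t = \begin{pmatrix} a_{11} & a_{21} \\ a_{12} & a_{22} \end{pmatrix},$$
and compare with $A^*$ above. The identity $A^* = A^t$ then reads $\bar a_{ij} = a_{ij}$ for each of the four entries, i.e. every $a_{ij}$ is real, which is precisely the defining condition of the split real form $\mathbb{H}_{\mathbb{R}}$.

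I do not expect a genuine obstacle here: each part reduces to matching the explicit formulas for the matrix operations against a reality constraint. The only point deserving a moment of care is the redundancy noted in part (a) — checking that conjugating one scalar equation reproduces another — since this is what guarantees that the two listed conditions on $z_{11}, z_{22}, z_{12}, z_{21}$ are equivalent to the full matrix identity, and not merely necessary consequences of it.
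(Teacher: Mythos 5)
Your proof is correct; the paper states this lemma without proof, treating it as an immediate consequence of the definitions, and your entry-by-entry verification (including the observation that two of the four scalar equations in part (a) are conjugates of the other two) is exactly the direct check the paper implicitly intends.
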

The restrictions of the metric of $\mathbb{H}_{\mathbb{C}}$ to these real forms have different signatures. As the names suggest, the restriction of the metric to $\mathbb{H}$ is Euclidean and its restriction to $\mathbb{H}_{\mathbb{R}}$ has the split signature $(+,+,-,-)$.  We can also consider the restriction of SDYM and ASDYM equation on these real form.  
We are interested in SDYM and ASDYM equations on these real forms and their corresponding conformal compactifications which we introduce next.  Consider 
 $$S:=\{(Z,W)\in \mathbb{H}_{\mathbb{C}}^2|\; \nexists A\in \mathbb{H}_{\mathbb{C}}\setminus \{0\} \quad  \text{s.t.} \quad ZA=WA=0\}$$
 Two elements $(Z,W), (Z_1,W_1)\in S$ are called equivalent if there is  an invertible element $q\in  \mathbb{H}_{\mathbb{C}}$ such that $Z_1=Zq, W_1=Wq$. Let $ \mathbb{H}_{\mathbb{C}}\mathbb{P}^1$, called the complex quaternionic projective line, be the set of equivalence classes of elements in $S$. The equivalence class of $(Z,W)\in S$ in 
$ \mathbb{H}_{\mathbb{C}}\mathbb{P}^1$  is denoted by $[Z:W]$. It is easy to see that $ \mathbb{H}_{\mathbb{C}}\mathbb{P}^1$ is a complex manifold isomorphic to $Gr(2,\mathbb{C}^4)$, the Grassmannian of complex 2-planes in $\mathbb{C}^4$. One isomorphism is given by sending $[Z:W]$ to the 2-plane generated by 
$$\begin{pmatrix}
z_{11}\\
z_{21}\\
w_{11}\\
w_{21}
\end{pmatrix}\; \text{and}\; \begin{pmatrix}
z_{12}\\
z_{22}\\
w_{12}\\
w_{22}
\end{pmatrix}$$
where 
 $$Z=\begin{pmatrix}
z_{11} & z_{12}\\
z_{21} & z_{22}
\end{pmatrix}\;\text{and}\;W=\begin{pmatrix}
w_{11} & w_{12}\\
w_{21} & w_{22}
\end{pmatrix}
$$
We identify $\{[Z:1]|Z\in \mathbb{H}_{\mathbb{C}}\}$ with $\mathbb{H}_{\mathbb{C}}$. Then one can see that the conformal structure on $\mathbb{H}_{\mathbb{C}}$ (given by the metric $ds^2$ as above) and its volume form extend to   $ \mathbb{H}_{\mathbb{C}}\mathbb{P}^1$. Hence we can consider the ASDYM and SDYM equations on $ \mathbb{H}_{\mathbb{C}}\mathbb{P}^1$ as the extensions of those on $ \mathbb{H}_{\mathbb{C}}$ because these equations are conformally invariant in dimension four. \\
In the similar way we can define the quaternionic and split quaternionic projective lines which we denote by $ \mathbb{H}\mathbb{P}^1$ and $ \mathbb{H}_{\mathbb{R}}\mathbb{P}^1$ respectively. 
These are totally real sub-manifolds of $ \mathbb{H}_{\mathbb{C}}\mathbb{P}^1$ of real dimension four. It is easy to see that 
$$ \mathbb{H}\mathbb{P}^1\cong S^4\; \text{and}\;  \mathbb{H}_{\mathbb{R}}\mathbb{P}^1\cong Gr(2,\mathbb{R}^4)$$
as smooth manifolds. Here $Gr(2,\mathbb{R}^4)$ is  the Grassmannian of real 2-planes in $\mathbb{R}^4$. 
In this paper we mainly deal with SDYM and ASDYM equations on $\mathbb{H}_{\mathbb{R}}\mathbb{P}^1$. \\ \\
Finally we explain the notion of $G$-SDYM and $G$-ASDYM solutions.  If $G$ is a Lie group, then by a $G$-SDYM (or $G$-ASDYM) solution we mean a vector bundle with a $G$-structure and an SDYM (or ASDYM) connection compatible with the $G$-structure.  \\   
\end{subsection}

  \begin{subsection}{Basic (Euclidean) instanton}
The (A)SDYM equations in the Euclidean case have been studied intensively over the past years.  
Here we just review some of the basic facts in the Euclidean case, see \cite{At}.\\   We identify $\mathbb{H}$ with  $\mathbb{R}^4$ via 
$$x:=(x_1,x_2,x_3,x_4)\mapsto\begin{pmatrix}
x_1+ix_4&x_2+ix_3\\
-x_2+ix_3&x_1-ix_4
\end{pmatrix}$$
We see that under this isomorphism the operator $Z\to\tilde{Z}$ becomes $$x\mapsto\bar{x}:=(x_1,-x_2,-x_3,-x_4)$$
In these new coordinates, the metric is just the Euclidean  metric  $ds^2=2(dx_1^2+dx_2^2+dx_3^2+dx_4^2)$ and the volume form is $dV=-4dx_1\wedge dx_2\wedge dx_3\wedge dx_4. $   It is easy to see that  the space of SD 2-forms is spanned by the following two forms
$$ dx_1\wedge dx_2-dx_3\wedge dx_4,$$
$$dx_1\wedge dx_3+dx_2\wedge dx_4,$$
 $$dx_1\wedge dx_4-dx_2\wedge dx_3$$
and the space of ASD 2-forms is spanned by the following 2-forms
 $$dx_1\wedge dx_2+dx_3\wedge dx_4,$$
$$dx_1\wedge dx_3-dx_2\wedge dx_4,$$
 $$dx_1\wedge dx_4+dx_2\wedge dx_3$$
It is easy to see that the SDYM equations on the connection potential $A=A_1dx_1+A_2dx_2+A_3dx_3+A_4dx_4$ of a connection (where $A_i:\mathbb{R}^4\to g$ are smooth functions and $g$ is the Lie algebra of some Lie group $G$) are equivalent to  the  following system of partial differential equations: 
$$F_{12}+F_{34}=F_{13}-F_{24}=F_{14}+F_{23}=0$$
 where $F_{ij}=\frac{\partial A_j}{\partial x_i}-\frac{\partial A_i}{\partial x_j}+[A_i,A_j]$. Also, the ASDYM equations are given by   $F_{12}-F_{34}=F_{13}+F_{24}=F_{14}-F_{23}=0.$\\
Following Atiyah, see \cite{At}, we consider the following $\mathbb{H}$-valued 1-form  on $\mathbb{H}$
\begin{equation}
A=(1+\bar{x}x)^{-1}\bar{x}dx=\frac{\bar{x}dx}{1+\bar{x}x}
\end{equation}
which is considered to be  the connection potential of a connection on $\mathbb{H}$. It is easy to see that the curvature 2-form of this connection, $F=dA+A\wedge A$, is given by
$$F=(1+\bar{x}x)^{-1}d\bar{x}\wedge (1+x\bar{x})^{-1}dx=\frac{d\bar{x}\wedge dx}{(1+x\bar{x})^2}$$
which is SD by lemma \ref{AL} part (d). Hence this gives an $\mathbb{H}^*$-SDYM solution on $\mathbb{R}^4$.
This solution is known as the basic instanton. This solution is in fact an $Sp(1)$-SDYM solution. To see this, we identify $Sp(1)$ with quaternions of norm 1, i.e. quaternions $x$ such that $||x||^2=x\bar{x}=1$. Then its Lie algebra, $sp(1)$, is identified with the purely imaginary quaternions, i.e. quaternions with $x_1=0$.  Let $g(x)=(1+\bar{x}x)^{\frac{-1}{2}}$. Under this gauge transformation, $A$ is transformed to  $g^{-1}Ag+g^{-1}dg$.
It is easy to see that 
$$ g^{-1}Ag+g^{-1}dg=(1+\bar{x}x)^{-\frac{1}{2}}\bar{x}dx(1+\bar{x}x)^{-\frac{1}{2}}+(1+\bar{x}x)^{\frac{1}{2}}d(1+\bar{x}x)^{\frac{-1}{2}}$$
$$=Im\{ \frac{\bar{x}dx}{1+x\bar{x}}\}$$
where $Im(x)=\frac{x-\bar{x}}{2}=(0,x_2,x_3,x_4)$. Hence $g^{-1}Ag+g^{-1}dg$ is an $Sp(1)$-connection. This means that this solution is reducible to an $Sp(1)$-solution. \\
We also recall that we have a topological invariant for $Sp(1)$-SDYM solutions in the Euclidean signature called the topological charge of the solution. It is defined to be
(identifying $Sp(1)$ and $SU(2)$)
$$k=\frac{-1}{8\pi^2}\int_{\mathbb{R}^4}tr( F\wedge F)$$
The charge of the basic instanton is
\begin{flalign*}
k & =\frac{-1}{8\pi^2}\int_{\mathbb{R}^4}\frac{-12}{(1+x_1^2+x_2^2+x_3^2+x_4^2)^4}dV\\
& =-\frac{6}{\pi^2}\int_{\mathbb{R}^4}\frac{dx_1\wedge dx_2\wedge dx_3\wedge dx_4}{(1+x_1^2+x_2^2+x_3^2+x_4^2)^4}\\
& =-\frac{6}{\pi^2}\int_{0}^{\infty}\int_{S^3}{\frac{1}{(1+r^4)^2}r^3}d\sigma_{S^3}dr\\
& =-12\int_{0}^{\infty}{\frac{r^3}{(1+r^4)^2}}dr=-1
\end{flalign*}
 It is well-known that this solution extends to an $Sp(1)$-ASDYM solution on $\mathbb{H}\mathbb{P}^1$. Finally we recall that the basic anti-instanton is defined by  \mbox{$(1+x\bar{x})^{-1}xd\bar{x}$} and its topological charge is $1$.

     \end{subsection}

                                                       \begin{subsection}{Basic split instanton}
In this section, using the algebra of split quaternions, we construct an $O(2)$-(A)SDYM solution on $\mathbb{H}_{\mathbb{R}}$. This construction is quite analogous to the construction of the basic (anti-)instanton as explained above.\\
Consider the following connection potential on $\mathbb{H}_{\mathbb{R}}$
 $$A=(1+XX^t)^{-1}XdX^t$$
 where $X=\begin{pmatrix}
 x_{11}&x_{12}\\
 x_{21}&x_{22}
 \end{pmatrix}\in\mathbb{H}_{\mathbb{R}}$. 
 This defines an $\mathbb{H}_{\mathbb{R}}^*$-connection on $\mathbb{H}_{\mathbb{R}}$.  Here 
 $\mathbb{H}_{\mathbb{R}}^*$ is the group of invertible split quaternions. Hence $\mathbb{H}_{\mathbb{R}}^*$ is just $GL(2,\mathbb{R})$.  One can compute the curvature, $F=dA+A\wedge A$, of this connection as follows
 $$d[(1+XX^t)^{-1}X]\wedge dX^t+(1+XX^t)^{-1}XdX^t\wedge (1+XX^t)^{-1}XdX^t=$$
$$-(1+XX^t)^{-1}d(XX^t)(1+XX^t)^{-1}X\wedge dX^t+ 
  (1+XX^t)^{-1}dX\wedge dX^t+$$
  $$(1+XX^t)^{-1}XdX^t\wedge (1+XX^t)^{-1}XdX^t=$$
$$-(1+XX^t)^{-1}dX\wedge X^t(1+XX^t)^{-1}X dX^t+(1+XX^t)^{-1}dX\wedge dX^t=$$
$$(1+XX^t)^{-1}dX\wedge (1-X^t(1+XX^t)^{-1}X)dX^t
$$
which is simply
$$F=(1+XX^t)^{-1}dX\wedge (1+X^tX)^{-1}dX^t$$
This, by lemma \ref{AL} part (a), is SD. We show that this solution is reducible to an $O(2)$-solution. To see this, note that we have 
$$O(2)=\{X\in \mathbb{H}_{\mathbb{R}}| XX^t=X^tX=1\}$$ 
Then its Lie algebra is
$$o(2)=\{X\in \mathbb{H}_{\mathbb{R}}| X+X^t=0\}$$
 Let $g=(1+XX^t)^{\frac{-1}{2}}$. Under this gauge transformation, $A$ is transformed to 
 $g^{-1}Ag+g^{-1}dg$ which is just
 $$(1+XX^t)^{-\frac{1}{2}}XdX^t(1+XX^t)^{-\frac{1}{2}}+(1+XX^t)^{\frac{1}{2}}d[(1+XX^t)^{-\frac{1}{2}}]$$
 It is easy to see that  this defines an $O(2)$-connection. Moreover the curvature 2-form in this new gauge is  
  $$g^{-1}Fg=(1+XX^t)^{-\frac{1}{2}}dX\wedge (1+X^tX)^{-1}dX^t(1+XX^t)^{-\frac{1}{2}}$$
which is equal to 
 $$ \frac{dX\wedge (1+X^tX)^{-1}dX^t }{\sqrt{\det(1+XX^t)}}$$
 Following the Euclidean picture we define the topological charge of the basic split instanton to be 
 \begin{equation}\label{TC}
 \frac{-1}{8\pi^2}\int_{ \mathbb{H}_{\mathbb{R}}}tr(F\wedge F) 
 \end{equation}
Then we have 
 \begin{proposition}
 The topological charge  of the basic split instanton is $1$.
 \end{proposition}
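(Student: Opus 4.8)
The plan is to compute the $4$-form $\operatorname{tr}(F\wedge F)$ pointwise, identify it as an explicit density times $dV$, and then integrate. The computation is organized around the symmetry $X\mapsto UXV^t$ for constant $U,V\in O(2)$: under it $dX\mapsto U\,dX\,V^t$, so $1+XX^t\mapsto U(1+XX^t)U^t$ and $1+X^tX\mapsto V(1+X^tX)V^t$, whence $F=(1+XX^t)^{-1}dX\wedge(1+X^tX)^{-1}dX^t\mapsto UFU^t$. Since the trace is conjugation invariant and the linear map $X\mapsto UXV^t$ of $\mathbb{H}_{\mathbb{R}}$ has determinant $(\det U)^2(\det V)^2=1$, both $\operatorname{tr}(F\wedge F)$ and $dV$ are preserved; writing $\operatorname{tr}(F\wedge F)=\Phi\,dV$, the function $\Phi$ therefore depends only on the singular values of $X$. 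By the singular value decomposition it then suffices to evaluate $\Phi$ at a diagonal matrix $X=\operatorname{diag}(\sigma_1,\sigma_2)$.

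At such a point $1+XX^t=1+X^tX=\operatorname{diag}(1+\sigma_1^2,1+\sigma_2^2)$, and a short two-by-two wedge computation (retaining all four differentials $dx_{ij}$) shows that the diagonal entries of $F$ vanish, so that $\operatorname{tr}(F\wedge F)=2\,F_{12}\wedge F_{21}$. Carrying this out gives $\operatorname{tr}(F\wedge F)=-\tfrac{4}{\Delta^2}\,dV$, where $\Delta:=\det(1+XX^t)=(1+\sigma_1^2)(1+\sigma_2^2)=1+\|X\|^2+(\det X)^2$. Because $XX^t$ is positive semidefinite one has $\Delta\ge 1$ everywhere, which is the split analogue of the factor $1+\|x\|^2$ of the Euclidean instanton and guarantees that $F$ is globally smooth and the charge integral absolutely convergent. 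By the invariance of the previous paragraph the identity $\operatorname{tr}(F\wedge F)=-4\Delta^{-2}\,dV$ then holds at every $X$.

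It remains to evaluate $\tfrac{-1}{8\pi^2}\int_{\mathbb{H}_{\mathbb{R}}}\operatorname{tr}(F\wedge F)=\tfrac{1}{2\pi^2}\int_{\mathbb{R}^4}\Delta^{-2}\,d^4X$. Here I would again use the singular value decomposition: integrating out the two angular ($O(2)$) variables turns Lebesgue measure into $2\pi^2\,|\sigma_1^2-\sigma_2^2|\,d\sigma_1\,d\sigma_2$ on $\{\sigma_1,\sigma_2\ge 0\}$, the overall constant being fixed by the normalization $\int_{\mathbb{R}^4}e^{-\|X\|^2}\,d^4X=\pi^2$. Since $\Delta=(1+\sigma_1^2)(1+\sigma_2^2)$, the $2\pi^2$ cancels and the charge reduces to the elementary integral $\int_0^\infty\!\!\int_0^\infty\frac{|\sigma_1^2-\sigma_2^2|}{(1+\sigma_1^2)^2(1+\sigma_2^2)^2}\,d\sigma_1\,d\sigma_2$. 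Splitting on $\sigma_1>\sigma_2$, introducing the antiderivatives of $\frac{1}{(1+\sigma^2)^2}$ and $\frac{\sigma^2}{(1+\sigma^2)^2}$ (equivalently substituting $\sigma=\tan\theta$), and integrating by parts makes this collapse to $1$, whence the charge equals $1$.

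The main obstacle is the second half. Unlike the Euclidean computation, where the integrand depends only on $1+\|x\|^2$ and reduces to a single radial integral, here $\Delta$ involves $(\det X)^2$ in addition to $\|X\|^2$, so the integral over $\mathbb{H}_{\mathbb{R}}$ is genuinely four dimensional and not radial. The device that replaces the radial reduction is the singular value decomposition, and the one point requiring care is getting the Weyl Jacobian $|\sigma_1^2-\sigma_2^2|$ and its normalization correct (in particular accounting for both signs of $\det X$); once the measure is right, the remaining one dimensional integrations are routine.
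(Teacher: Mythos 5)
Your computation is correct and reaches the paper's conclusion, but the integration step goes by a genuinely different route. You and the paper agree on the pointwise identity $\operatorname{tr}(F\wedge F)=-4\det(1+XX^t)^{-2}\,dV$: the paper obtains it from $\operatorname{tr}(F\wedge F)=-2\det F$ for the $o(2)$-valued curvature, while you derive it from the equivariance $F\mapsto UFU^t$ under $X\mapsto UXV^t$ with $U,V\in O(2)$ together with an explicit evaluation at diagonal $X$ (your check that the diagonal entries of $F$ vanish there and that $2F_{12}\wedge F_{21}=-4\Delta^{-2}dV$ is right). For the remaining integral $\frac{1}{2\pi^2}\int\det(1+XX^t)^{-2}\,dV$, the paper makes the explicit linear change of variables in which $\det X=x_1^2+x_2^2-x_3^2-x_4^2$, passes to polar coordinates on each pair of variables, and then runs the elementary substitutions $x=r^2$, $y=s^2$, $z=x+y$, $w=x-y$ down to a closed-form antiderivative; you instead invoke the singular value decomposition of $M_2(\mathbb{R})$ with Weyl density $2\pi^2|\sigma_1^2-\sigma_2^2|\,d\sigma_1\,d\sigma_2$ on the quadrant (your Gaussian normalization of the constant is correct) and reduce to $\int_0^\infty\!\int_0^\infty|\sigma_1^2-\sigma_2^2|(1+\sigma_1^2)^{-2}(1+\sigma_2^2)^{-2}\,d\sigma_1\,d\sigma_2$, which the substitution $\sigma_i=\tan\theta_i$ turns into $\int\!\int\sin(\theta_1+\theta_2)|\sin(\theta_1-\theta_2)|\,d\theta_1\,d\theta_2=1$. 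The two reductions are in fact the same two-dimensional integral in disguise, since the paper's denominator factors as $1+2(r^2+s^2)+(r^2-s^2)^2=\bigl(1+(r+s)^2\bigr)\bigl(1+(r-s)^2\bigr)$ and $r+s$, $|r-s|$ are precisely the singular values of $X$; your version makes the factorization $\det(1+XX^t)=(1+\sigma_1^2)(1+\sigma_2^2)$ and the convergence bound $\Delta\ge 1$ manifest from the outset, at the cost of having to justify the Weyl Jacobian, whereas the paper's chain of substitutions is entirely elementary.
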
                        
  \begin{proof}
We have
$$k=\frac{-1}{8\pi^2}\int_{ \mathbb{H}_{\mathbb{R}}}tr(F\wedge F)=\frac{1}{4\pi^2}\int_{ \mathbb{H}_{\mathbb{R}}}\det(F)=\frac{1}{2\pi^2}\int_{ \mathbb{H}_{\mathbb{R}}}\frac{dV}{\det(1+XX')^2}$$
Using the change of variables 
$$(x_1,x_2,x_3,x_4)=(\frac{x_{11}+x_{22}}{2},\frac{x_{21}-x_{12}}{2},\frac{x_{12}+x_{21}}{2},\frac{x_{22}-x_{11}}{2})$$ 
we have
$$k=\frac{2}{\pi^2}\int_{ \mathbb{R}^4}\frac{dx_1\wedge dx_2\wedge dx_3\wedge dx_4}{(1+x_1^2+x_2^2+x_3^2+x_4^2+(x_1^2+x_2^2-x_3^2-x_4^2)^2)^2}$$
Using polar coordinates on $(x_1,x_2)$ and $(x_3,x_4)$ we can rewrite the integral as
$$k=8\int_{0}^{\infty}\int_{0}^{\infty}{\frac{rs}{(1+2(r^2+s^2)+(r^2-s^2)^2)^2}dr ds}$$
Using change of variables $x=r^2$, $y=s^2$ we have
$$k=2\int_{0}^{\infty}\int_{0}^{\infty}{\frac{1}{(1+2(x+y)+(x-y)^2)^2}dx dy}$$
The change of variables $z=x+y$ and $w=x-y$ gives
$$k=\int_{|w|\leq z}\frac{dz dw}{(1+2z+w^2)^2}=\int_{0}^{\infty}\int_{-z}^{z}\frac{dw}{(1+2z+w^2)^2}dz$$
which is
$$k=\int_{0}^{\infty}\frac{z}{(1+2z)(z+1)^2}+\frac{\arctan(\frac{z}{\sqrt{1+2z}})}{(1+2z)^{\frac{3}{2}}}dz$$
and finally
$$k=\frac{-1}{z+1}-\frac{\arctan(\frac{z}{\sqrt{1+2z}})}{\sqrt{1+2z}}\large{]}_{0}^{\infty}=1$$
  \end{proof}
As we will see, the basic split instanton extends to an $O(2)$-SDYM solution on $\mathbb{H}_{\mathbb{R}}\mathbb{P}^1$.\\ 
It is easy to show that $(1+X^tX)^{-1}X^tdX$ satisfies the ASDYM equations which we call the basic split anti-instanton. Its curvature is given by
$$(1+X^tX)^{-1}dX^t\wedge (1+XX^t)^{-1}dX$$ 
This solution is an $O(2)$-ASDYM solution with topological charge $-1$ and it extends to $\mathbb{H}_{\mathbb{R}}\mathbb{P}^1$.

\end{subsection}                               
                               
  \begin{subsection}{Geometrical constructions and a unifying picture}
As one can see, the basic Euclidean instanton and basic split instanton are very much similar. However,  it can be seen that the analytic continuations of these solutions are singular on the other real form. Nevertheless there is a close relation between these two solutions that we describe in this section. \\
Consider the following smooth $\mathbb{H}_{\mathbb{C}}$-valued $1$-form on $\mathbb{H}_{\mathbb{C}}$
 $$A=(1+Z^*Z)^{-1}Z^*dZ$$
This defines a smooth connection on $\mathbb{H}_{\mathbb{C}}$. Similar to the split case one can see that the curvature of this connection, $F=dA+A\wedge A$, is 
$$F=(1+Z^*Z)^{-1}dZ^*\wedge (1+ZZ^*)^{-1}dZ$$
Note that the curvature is a 2-form  of type $(1,1)$ and hence defines a holomorphic structure on  its associated 2-vector bundle.   Moreover one can see that this connection is reducible to a $U(2)$-connection (similar to the real cases, see previous sections). Now, thanks to lemma \ref{AL1}, we have the following interesting proposition 
\begin{proposition}\label{UC}
The restriction of the connection  $$A=(1+Z^*Z)^{-1}Z^*dZ$$ to $\mathbb{H}$ gives the basic Euclidean instanton and its restriction to $\mathbb{H}_{\mathbb{R}}$ gives the basic split anti-instanton.
\end{proposition}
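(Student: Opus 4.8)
The plan is to observe that the claimed identities amount to restricting the smooth $1$-form $A=(1+Z^*Z)^{-1}Z^*dZ$ to each totally real submanifold and invoking Lemma \ref{AL1} to identify the operation $Z\mapsto Z^*$ on that submanifold. Since $A$ is smooth, depending on $Z$ through both the holomorphic differential $dZ$ and the antiholomorphic quantity $Z^*$, its pullback to a four-real-dimensional totally real submanifold of $\mathbb{H}_{\mathbb{C}}$ is a genuine smooth $1$-form, and it suffices to compute what $Z^*$ becomes upon restriction.

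First I would treat the Euclidean case. On $\mathbb{H}$, Lemma \ref{AL1}(a) gives $Z^*=\tilde{Z}$, so the restriction reads $A|_{\mathbb{H}}=(1+\tilde{Z}Z)^{-1}\tilde{Z}\,dZ$. I would then pass to coordinates $(x_1,x_2,x_3,x_4)$ via the identification $x\mapsto Z$ fixed earlier. Under that map one has $Z=x$ and, as already recorded in the excerpt, the operator $Z\mapsto\tilde{Z}$ becomes $x\mapsto\bar{x}$, while $dZ$ pulls back to $dx$. Substituting these gives $A|_{\mathbb{H}}=(1+\bar{x}x)^{-1}\bar{x}\,dx$, which is exactly the basic Euclidean instanton.

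Next I would treat the split case, which is even more direct. On $\mathbb{H}_{\mathbb{R}}$ the entries $z_{ij}$ are real, so $\bar{z}_{ij}=z_{ij}$ and hence $Z^*=Z^t$; this is precisely Lemma \ref{AL1}(b). Writing $Z=X$ with $X\in\mathbb{H}_{\mathbb{R}}$ and noting that $dZ$ pulls back to $dX$, the restriction becomes $A|_{\mathbb{H}_{\mathbb{R}}}=(1+X^tX)^{-1}X^t\,dX$, which is the basic split anti-instanton defined at the end of the previous subsection.

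The computation is routine; the only point requiring care is the bookkeeping with the identifications, namely verifying that under the map $x\mapsto Z$ one genuinely has $\tilde{Z}=\bar{x}$ and $dZ=dx$, so that the Euclidean restriction matches the normalization used for the basic instanton, and confirming that restricting a non-holomorphic $1$-form to a totally real submanifold behaves as an honest pullback. I do not anticipate any serious obstacle: the entire substance of the statement is contained in the two characterizations of the real forms supplied by Lemma \ref{AL1}, and everything else is substitution.
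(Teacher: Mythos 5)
Your argument is correct and is exactly the one the paper intends: the proposition is stated immediately after the phrase ``thanks to lemma \ref{AL1},'' and the whole content is that $Z^*=\tilde{Z}$ on $\mathbb{H}$ and $Z^*=Z^t$ on $\mathbb{H}_{\mathbb{R}}$, after which substitution yields $(1+\bar{x}x)^{-1}\bar{x}\,dx$ and $(1+X^tX)^{-1}X^t\,dX$ respectively. Your extra care about the identification $x\mapsto Z$ and the pullback of $dZ$ is sound bookkeeping that the paper leaves implicit.
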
  
Therefore the above connection unifies the basic solutions in the different real forms. Next we explain this relation geometrically. As we saw, $\mathbb{H}_{\mathbb{C}}\mathbb{P}^1$ is isomorphic to $Gr(2,\mathbb{C}^4)$ as a complex manifold. Therefore we can consider the universal vector bundle on $\mathbb{H}_{\mathbb{C}}\mathbb{P}^1$ which we denote by $\mathcal{P}$.  More precisely, the fiber of $\mathcal{P}$ at $[Z:W]$ is the 2-plane generated by 
 $$\begin{pmatrix}
z_{11}\\
z_{21}\\
w_{11}\\
w_{21}
\end{pmatrix}\; \text{and}\; \begin{pmatrix}
z_{12}\\
z_{22}\\
w_{12}\\
w_{22}
\end{pmatrix}$$
where 
 $$Z=\begin{pmatrix}
z_{11} & z_{12}\\
z_{21} & z_{22}
\end{pmatrix}\;\text{and}\;W=\begin{pmatrix}
w_{11} & w_{12}\\
w_{21} & w_{22}
\end{pmatrix}
$$
So $\mathcal{P}$ is a vector sub-bundle of the trivial bundle with fibers $\mathbb{C}^4$.  Let $(\quad,\quad)$ be the standard Hermitian product on $\mathbb{C}^4$, i.e.
 $$( \begin{pmatrix}
 a_1\\
 a_2\\
  a_3\\
 a_4\\
  \end{pmatrix},\begin{pmatrix}
 b_1\\
 b_2\\
  b_3\\
 b_4\\
  \end{pmatrix})=a_1\bar{b}_1+a_2\bar{b}_2+a_3\bar{b}_3+a_4\bar{b}_4
  $$ 
We consider the trivial (smooth) connection $D$ on   $\mathbb{H}_{\mathbb{C}}\mathbb{P}^1\times \mathbb{C}^4$.  The projection of $D$ onto $\mathcal{P}$ gives us a connection $\nabla$ on $\mathcal{P}$. Clearly $\nabla$ is compatible with the Hermitian structure on $\mathcal{P}$ hence $\nabla$ is an $U(2)$-connection.  Now we have
\begin{proposition}\label{UP}
The restriction of $(\mathcal{P},\nabla)$ to $\mathbb{H}$ is the basic Euclidean instanton and its restriction to   $\mathbb{H}_{\mathbb{R}}\mathbb{P}^1$ is the basic split anti-instanton.
\end{proposition}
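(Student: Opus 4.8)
The plan is to reduce Proposition \ref{UP} to Proposition \ref{UC} by computing the connection potential of $\nabla$ in an explicit holomorphic frame over the affine chart. First I would work over the dense open set $\{[Z:1]\mid Z\in\mathbb{H}_{\mathbb{C}}\}\subset\mathbb{H}_{\mathbb{C}}\mathbb{P}^1$. By the very definition of $\mathcal{P}$, the fiber at $[Z:1]$ is spanned by the columns of the $4\times 2$ matrix
$$\Phi=\begin{pmatrix} Z\\ I\end{pmatrix},$$
so $\Phi$ furnishes a holomorphic frame for $\mathcal{P}$ together with a trivialization $c\mapsto\Phi c$ by the trivial $\mathbb{C}^2$-bundle. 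This is the gauge in which I expect to recover the connection potential of Proposition \ref{UC}.

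Next I would recall the standard description of the projected connection. Since $D$ is the flat connection $d$ on $\mathbb{H}_{\mathbb{C}}\mathbb{P}^1\times\mathbb{C}^4$ and $\nabla=P\circ D$, where $P=\Phi(\Phi^*\Phi)^{-1}\Phi^*$ is the orthogonal projection onto $\mathcal{P}$ for the standard Hermitian product, applying $\nabla$ to a section $s=\Phi c$ and using $P\Phi=\Phi$ gives $\nabla s=\Phi(dc+Ac)$ with
$$A=(\Phi^*\Phi)^{-1}\Phi^*\,d\Phi.$$
A direct computation using $\Phi^*=\begin{pmatrix} Z^* & I\end{pmatrix}$ and $d\Phi=\begin{pmatrix} dZ\\ 0\end{pmatrix}$ yields $\Phi^*\Phi=1+Z^*Z$ and $\Phi^*d\Phi=Z^*dZ$, hence
$$A=(1+Z^*Z)^{-1}Z^*dZ,$$
which is precisely the connection potential appearing in Proposition \ref{UC}.

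With this identification in hand, Proposition \ref{UC} immediately yields the conclusion on the affine parts: the restriction of $(\mathcal{P},\nabla)$ to $\mathbb{H}$ is the basic Euclidean instanton and its restriction to $\mathbb{H}_{\mathbb{R}}$ is the basic split anti-instanton. To upgrade the second statement to the compactification, I would observe that $(\mathcal{P},\nabla)$ is manifestly a globally defined smooth $U(2)$-bundle with connection on all of $\mathbb{H}_{\mathbb{C}}\mathbb{P}^1$, and hence restricts smoothly to the totally real submanifold $\mathbb{H}_{\mathbb{R}}\mathbb{P}^1$; since it agrees with the basic split anti-instanton on the dense chart $\mathbb{H}_{\mathbb{R}}$, and the latter was already observed to extend to $\mathbb{H}_{\mathbb{R}}\mathbb{P}^1$, the two smooth objects must coincide by density.

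I do not anticipate a serious obstacle, since the geometric content has effectively been absorbed into Proposition \ref{UC} and Proposition \ref{UP} is a bundle-theoretic reinterpretation of that computation. The two points requiring genuine care are: first, checking that the frame $\Phi$ really is the gauge in which Proposition \ref{UC} is phrased, so that the connection potentials agree on the nose rather than merely up to a gauge transformation; and second, the density-and-smoothness argument needed to pass from the affine chart $\mathbb{H}_{\mathbb{R}}$ to the full compactification $\mathbb{H}_{\mathbb{R}}\mathbb{P}^1$.
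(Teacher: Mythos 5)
Your proposal is correct and follows essentially the same route as the paper: both pass to the affine chart, take the frame $\begin{pmatrix} Z\\ I\end{pmatrix}$, use the standard formula $A=(u^*u)^{-1}u^*du$ for the projected connection to obtain $A=(1+Z^*Z)^{-1}Z^*dZ$, and then invoke Proposition \ref{UC}. Your added details (deriving the projection formula and the density argument for extending to $\mathbb{H}_{\mathbb{R}}\mathbb{P}^1$) merely flesh out steps the paper leaves implicit or cites to Atiyah.
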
 
\begin{proof}
On $\{[Z:1]|Z\in \mathbb{H}_{\mathbb{C}}\}$, we have the following frame for $\mathcal{P}$
$$u:= \begin{pmatrix}
 z_{11}& z_{12}\\
 z_{21}& z_{22}\\
  1& 0\\
 0& 1\\
  \end{pmatrix}$$
One can see that  the connection potential of $\nabla$ in this local frame $u$ is given by  (see \cite{At})
$$A=(u^{*}u)^{-1}u^*du$$ 
where 
$$u^*=\begin{pmatrix}
 \bar{z}_{11}& \bar{z}_{21}&1&0\\
 \bar{z}_{12}& \bar{z}_{22}&0&1\\
  \end{pmatrix}$$
This implies that 
 $$A=(1+Z^*Z)^{-1}Z^*dZ.$$
 Now proposition \ref{UC} finishes the proof. Note  that the structure group of $\mathcal{P}$ reduces to $Sp(1)$ on $\mathbb{H}$ and to $O(2)$ on $\mathbb{H}_{\mathbb{R}}$.
\end{proof}

  \end{subsection}

 \end{section}
 
 \begin{section}{Moduli space of split anti-instantons of topological charge $-1$}                     
It is well-known that the moduli space of (Euclidean) instantons (i.e. $Sp(1)$-SDYM solutions on $\mathbb{H}\mathbb{P}^1\cong S^4$) of topological charge  $-1$ is isomorphic to $SL(2,\mathbb{H})/Sp(2)$, see \cite{At} for example.                    
The proper conformal group of $\mathbb{H}\mathbb{P}^1$ is $SL(2,\mathbb{H})/\{\pm 1\}$. Hence $SL(2,\mathbb{H})$ acting on the basic  instanton produces solutions of topological charge $-1$. It is well-known that the subgroup of $SL(2,\mathbb{H})$ which fixes the basic instanton (up to gauge transformation) is $Sp(2)$, i.e. the maximal compact subgroup of $SL(2,\mathbb{H})$. It is well-known that all $Sp(1)$-SDYM solutions of topological charge $1$ on $\mathbb{H}\mathbb{P}^1$ can be obtained this way and hence we have that the moduli space of $Sp(1)$-SDYM solutions of topological charge $-1$ on $\mathbb{H}\mathbb{P}^1$ is isomorphic to $SL(2,\mathbb{H})/Sp(2)$.\\
In this section we want to consider the moduli problem in the split case.  As we will see the moduli space of $O(2)$-ASDYM solutions of topological charge $-1$ on $\mathbb{H}_{\mathbb{R}}\mathbb{P}^1$ is infinite dimensional. Nevertheless, we will see that by imposing a condition on the solutions we obtain a finite dimensional space of solutions which is isomorphic to    $SL(2,\mathbb{H}_\mathbb{R})/SO(4)\cong SL(4,\mathbb{R})/SO(4)$. This is in complete parallel with the Euclidean case because  the proper conformal group of $\mathbb{H}_{\mathbb{R}}\mathbb{P}^1$ is $SL(4,\mathbb{R})/\{\pm 1\}$ and $SO(4)$ is the maximal compact subgroup of $SL(4,\mathbb{R})$.  

 \begin{subsection}{Preliminaries}
 First we briefly explain the classification of $O(2)$ principal bundles (or equivalently real orthogonal vector bundles of rank 2) on 
 $\mathbb{H}_{\mathbb{R}}\mathbb{P}^1$.  \\
 The split quaternionic projective line has a double cover denoted by  $ \widetilde{\mathbb{H}_{\mathbb{R}}\mathbb{P}^1}$ 
 which is isomorphic to the Grassmannian of oriented 2-planes in $\mathbb{R}^4$.  More precisely on 
 $$S:=\{(Z,W)\in \mathbb{H}_{\mathbb{C}}^2|\; \nexists A\in \mathbb{H}_{\mathbb{C}}\setminus \{0\} \quad  \text{s.t.} \quad ZA=WA=0\}$$
 we define a weaker equivalence relation. Two elements $(X,Y), (X_1,Y_1)\in S$ are called equivalent if there is  an  element $q\in  \mathbb{H}_{\mathbb{R}}$ with positive determinant such that $X_1=Xq, Y_1=Yq$. Let $ \widetilde{\mathbb{H}_{\mathbb{R}}\mathbb{P}^1}$, called the oriented split quaternionic projective line, be the set of equivalence classes of elements in $S$.  
The equivalence class of $(X,Y)\in S$ in $ \widetilde{\mathbb{H}_{\mathbb{R}}\mathbb{P}^1}$
is denoted by $\{X:Y\}$.  It is well-known that $ \widetilde{\mathbb{H}_{\mathbb{R}}\mathbb{P}^1}$ is a smooth manifold diffeomorphic to $S^2\times S^2$. We have an obvious map $\pi:\widetilde{\mathbb{H}_{\mathbb{R}}\mathbb{P}^1}\to \mathbb{H}_{\mathbb{R}}\mathbb{P}^1$. Moreover we have the following isomorphism
$\sigma: \widetilde{\mathbb{H}_{\mathbb{R}}\mathbb{P}^1}\to \widetilde{\mathbb{H}_{\mathbb{R}}\mathbb{P}^1}$
given by
$$\sigma(\{X:Y\}]=\{XJ:YJ\}$$
where $J:=\begin{pmatrix}
0 &1 \\
1 & 0
\end{pmatrix}
$.  Clearly we have $\pi\circ\sigma=\pi$.\\
 It is well-known that  real line bundles on any manifold $M$ are  classified by their first Stiefel-Whitney class $w_1\in H^1(M,\mathbb{Z}_2)$. Likewise complex line bundles on $M$ are  classified by their first Chern class $c_1\in H^2(M,\mathbb{Z})$.\\ 
For $O(2)$-principal  bundles or equivalently real orthogonal vector bundles of rank 2, the first  invariant is the Stiefel-Whitney class $w_1\in H^1(M,\mathbb{Z}_2)$. 
This invariant is zero iff the vector bundle is orientable or equivalently the  principal $O(2)$-bundle is in fact a  principal $SO(2)$-bundle.  Therefore if $H^1(M,\mathbb{Z}_2)=0$, every principal $O(2)$-bundle is induced by a principal $SO(2)$-bundle. Set $\bar{H}^2(M,\mathbb{Z})=H^2(M,\mathbb{Z})/\sim$ where $a\sim b$ if and only if $a=b$ or $a=-b$ (note that  $\bar{H}^2(M,\mathbb{Z})$ is not a group).
\begin{proposition}
Suppose that $H^1(M,\mathbb{Z}_2)=0$. Then there is a 1-1 correspondence between the isomorphism classes of principal $O(2)$-bundles on $M$ and $\bar{H}^2(M,\mathbb{Z})$. 
\end{proposition}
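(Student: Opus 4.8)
The plan is to reduce the classification of principal $O(2)$-bundles to that of principal $SO(2)$-bundles and then pass to the quotient by the residual symmetry coming from the two components of $O(2)$. Throughout I assume $M$ is connected (as in the case of interest, $M=\mathbb{H}_{\mathbb{R}}\mathbb{P}^1$); for disconnected $M$ one would have to reverse orientations on each component separately, so the global $\pm$ identification defining $\bar{H}^2$ is tailored to the connected case. First I would record that $SO(2)\cong U(1)$, so a principal $SO(2)$-bundle is the same thing as a complex line bundle and is classified by its first Chern class $c_1\in H^2(M,\mathbb{Z})$. Since $H^1(M,\mathbb{Z}_2)=0$, the preceding discussion already gives that every $O(2)$-bundle is induced by an $SO(2)$-bundle, so the assignment $c\mapsto E(c)$, sending $c\in H^2(M,\mathbb{Z})$ to the $O(2)$-bundle underlying the line bundle $L_c$ with $c_1(L_c)=c$, is \emph{surjective} onto isomorphism classes of $O(2)$-bundles. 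The remaining content is therefore to understand exactly when $E(c)\cong E(c')$.

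Next I would check that the assignment descends to $\bar{H}^2(M,\mathbb{Z})$, that is, that $E(c)\cong E(-c)$. The key observation is that the complex-conjugate line bundle $\bar{L}_c$ has $c_1(\bar{L}_c)=-c$, while complex conjugation $L_c\to\bar{L}_c$ is an isometry of the underlying \emph{real} rank-$2$ orthogonal bundles, reversing only the complex structure and hence the orientation. Forgetting orientations identifies $E(c)$ with $E(-c)$, so the induced map $\bar{E}$ from $\bar{H}^2(M,\mathbb{Z})$ to the set of isomorphism classes of $O(2)$-bundles is well defined, and it is still surjective.

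It remains to prove injectivity, which I expect to be the crux. Suppose $\varphi\colon E(c)\to E(c')$ is an isomorphism of $O(2)$-bundles (equivalently, an isometry of real rank-$2$ orthogonal bundles). Its pointwise determinant is a continuous $\{\pm1\}$-valued function on $M$, hence locally constant, hence globally constant because $M$ is connected ($H^0(M,\mathbb{Z}_2)=\mathbb{Z}_2$); so $\varphi$ is either orientation-preserving or orientation-reversing with respect to the canonical orientations that the complex structures put on $L_c$ and $L_{c'}$. In the orientation-preserving case $\varphi$ lands pointwise in $SO(2)=U(1)$, so it is a complex-linear isomorphism and $c=c'$. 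In the orientation-reversing case, composing $\varphi$ with complex conjugation produces an orientation-preserving, hence complex-linear, isomorphism $L_c\to\bar{L}_{c'}$, giving $c=-c'$. Either way $c\sim c'$ in $\bar{H}^2(M,\mathbb{Z})$, which is injectivity.

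Conceptually this is the statement that $BSO(2)\to BO(2)$ is the universal double cover, with deck group $\mathbb{Z}_2\cong\pi_1(BO(2))$ acting on $BSO(2)\simeq\mathbb{CP}^\infty$ by $c_1\mapsto -c_1$; this action is precisely the one induced by conjugating a rotation by a reflection, i.e. inversion on $SO(2)$. The hypothesis $H^1(M,\mathbb{Z}_2)=0$ is exactly what guarantees that every map $M\to BO(2)$ lifts to $BSO(2)$ and that the lifts form a single $\mathbb{Z}_2$-orbit, yielding $[M,BO(2)]\cong H^2(M,\mathbb{Z})/(c\sim-c)=\bar{H}^2(M,\mathbb{Z})$. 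The main obstacle, and the step where genuine care is required, is the fiber analysis establishing injectivity: one must show that the \emph{only} coincidence $E(c)\cong E(c')$ is the one forced by orientation reversal, and this is where connectedness of $M$ (through $H^0(M,\mathbb{Z}_2)=\mathbb{Z}_2$) enters essentially.
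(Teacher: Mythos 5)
Your proof is correct and follows essentially the same route as the paper: reduce to $SO(2)\cong U(1)$-bundles classified by $c_1\in H^2(M,\mathbb{Z})$, observe that reversing the orientation negates $c_1$, and pass to the quotient $\bar{H}^2(M,\mathbb{Z})$. The only difference is one of detail: you make explicit the injectivity step (via the locally constant determinant of an orthogonal bundle isometry) and the connectedness hypothesis on $M$, both of which the paper's terser argument leaves implicit.
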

\begin{proof}
Since  $H^1(M,\mathbb{Z}_2)=0$, the isomorphism classes of principal $SO(2)$-bundles  on $M$ are in 1-1 correspondence between pairs $(E,o)$ where $E$ is a real orthogonal vector bundle on $M$ of rank two and $o$ is an orientation on $E$. Principal $SO(2)$-bundles $P$ on $M$ are classified by $c_1(P)\in H^2(M,\mathbb{Z})$. Moreover $c_1(E,-o)=-c_1(E,o)$ where $-o$ is the opposite orientation to $o$. Hence principal $O(2)$-bundles on $M$ are classified by 
$$c_1(E):=\overline{c_1(E,o)}\in \bar{H}^2(M,\mathbb{Z})$$
 where $o$ is an orientation on $E$. 
\end{proof}
For a real orthogonal vector bundle $E$ of rank two on $M$ with \mbox{$H^1(M,\mathbb{Z}_2)=0$,} we call 
$c_1(E):=\overline{c_1(E,o)}$ the first Chern class of $E$. \\
The cohomology groups of  $ \mathbb{H}_{\mathbb{R}}\mathbb{P}^1$ have been computed by Ehresmann, see 
\cite{Eh} or \cite{Ch}, as follows 
$$H^1(  \mathbb{H}_{\mathbb{R}}\mathbb{P}^1,\mathbb{Z}_2)\cong H^2(  \mathbb{H}_{\mathbb{R}}\mathbb{P}^1,\mathbb{Z})\cong \mathbb{Z}_2$$
Therefore, up to isomorphism, there are only two real line bundles on $\mathbb{H}_{\mathbb{R}}\mathbb{P}^1$, namely the trivial one $\varepsilon$ and the nontrivial one which we denote by $\widetilde{\varepsilon}$.
We always realize $\widetilde{\varepsilon}$ as the line bundle on $\mathbb{H}_{\mathbb{R}}\mathbb{P}^1$ whose sheaf of sections is the real-valued smooth functions on $\widetilde{\mathbb{H}_{\mathbb{R}}\mathbb{P}^1}$ such that $f(\sigma(x))=-f(x)$, i.e. odd functions. 
Similarly, up to isomorphism, there are only two complex line bundles on $\mathbb{H}_{\mathbb{R}}\mathbb{P}^1$, namely  $\varepsilon_{\mathbb{C}}$ and the nontrivial one which we denote by $\mathcal{L}$. In fact, under the isomorphism $H^1(\mathbb{H}_{\mathbb{R}}\mathbb{P}^1,\mathbb{Z}_2)\to H^2(\mathbb{H}_{\mathbb{R}}\mathbb{P}^1,\mathbb{Z})$ coming from the exact sequence $0\to\mathbb{Z}\to\mathbb{Z}\to\mathbb{Z}_2\to 0$, we have $c_1(\mathcal{L})=
w_1(\widetilde{\varepsilon})$ and hence 
$$\mathcal{L}=\widetilde{\varepsilon}_{\mathbb{C}}=\widetilde{\varepsilon}\otimes \mathbb{C}$$
 Up to isomorphism, there are only two orientable real orthogonal vector bundle of rank two on $\mathbb{H}_{\mathbb{R}}\mathbb{P}^1$  namely the trivial one and $\widetilde{\varepsilon}\oplus \widetilde{\varepsilon}$. 
Now suppose that $E$ is a real orthogonal vector bundle of rank 2 on $ \mathbb{H}_{\mathbb{R}}\mathbb{P}^1$. Then $\pi^*E$ is  a  real orthogonal vector bundle of rank 2 on $\widetilde{ \mathbb{H}_{\mathbb{R}}\mathbb{P}^1}$. Since $H^1( \widetilde{ \mathbb{H}_{\mathbb{R}}\mathbb{P}^1},\mathbb{Z}_2)=0$, we can consider $c_1(\pi^*E)$.  Using the isomorphism 
$$H^2(\mathbb{H}_{\mathbb{R}}\mathbb{P}^1,\widetilde{\mathbb{Z}})\to H^2(\widetilde{ \mathbb{H}_{\mathbb{R}}\mathbb{P}^1},\mathbb{Z})$$
we can assign an element $\widetilde{c_1(E)}$
in $\bar{H}^2(\mathbb{H}_{\mathbb{R}}\mathbb{P}^1,\widetilde{\mathbb{Z}})$ to $E$ which corresponds to  $c_1(\pi^*E)$. We call it the first twisted Chern class of $E$. In order to state the classification result we need the following definition
\begin{definition}
Real vector bundles $E$ and $F$ on $\mathbb{H}_{\mathbb{R}}\mathbb{P}^1$  are called T-isomorphic
if $E$ is isomorphic to $F$ or $\widetilde{F}:=F\otimes \widetilde{\varepsilon}$. 
\end{definition}
For more on twisting sheaves see \cite{Ar}.
\begin{proposition}
The T-isomorphism classes of real orthogonal vector bundles of rank two on $\mathbb{H}_{\mathbb{R}}\mathbb{P}^1$ are in 1-1 correspondence with  $\bar{H}^2(\mathbb{H}_{\mathbb{R}}\mathbb{P}^1,\widetilde{\mathbb{Z}})$. The correspondence is given by $E\longmapsto \widetilde{c_1(E)}$. 
\end{proposition}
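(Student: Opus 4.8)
The plan is to reduce the classification over $\mathbb{H}_{\mathbb{R}}\mathbb{P}^1$ to the classification over its simply connected double cover $\widetilde{\mathbb{H}_{\mathbb{R}}\mathbb{P}^1}\cong S^2\times S^2$, where rank-two orthogonal bundles are completely transparent, and then to descend along $\pi$. First I would record that on $\widetilde{\mathbb{H}_{\mathbb{R}}\mathbb{P}^1}$ every real orthogonal bundle of rank two is orientable, since $H^1(S^2\times S^2,\mathbb{Z}_2)=0$, hence reduces to an $SO(2)=U(1)$-bundle and is classified up to isomorphism by its first Chern class in $H^2(\widetilde{\mathbb{H}_{\mathbb{R}}\mathbb{P}^1},\mathbb{Z})$; forgetting the orientation replaces the class by its image in $\bar H^2(\widetilde{\mathbb{H}_{\mathbb{R}}\mathbb{P}^1},\mathbb{Z})$. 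Under the isomorphism $H^2(\mathbb{H}_{\mathbb{R}}\mathbb{P}^1,\widetilde{\mathbb{Z}})\to H^2(\widetilde{\mathbb{H}_{\mathbb{R}}\mathbb{P}^1},\mathbb{Z})$ used to define the first twisted Chern class, the assignment $E\mapsto\widetilde{c_1(E)}$ is by construction the composite $E\mapsto c_1(\pi^*E)$, so the statement to be proved becomes: $E\mapsto [c_1(\pi^*E)]$ induces a bijection between T-isomorphism classes on $\mathbb{H}_{\mathbb{R}}\mathbb{P}^1$ and $\bar H^2(\widetilde{\mathbb{H}_{\mathbb{R}}\mathbb{P}^1},\mathbb{Z})$.

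Well-definedness on T-isomorphism classes is the easy step. Since $\widetilde\varepsilon$ is the real line bundle associated with the cover $\pi$, its pullback $\pi^*\widetilde\varepsilon$ is trivial, whence $\pi^*\widetilde E=\pi^*(E\otimes\widetilde\varepsilon)\cong\pi^*E$ and therefore $\widetilde{c_1(\widetilde E)}=\widetilde{c_1(E)}$. For surjectivity I would begin from a class $\gamma\in H^2(\widetilde{\mathbb{H}_{\mathbb{R}}\mathbb{P}^1},\mathbb{Z})$ and first establish the key input that the deck transformation $\sigma$, being orientation reversal of the oriented $2$-plane (right multiplication by $J$, which has negative determinant), acts by $-1$ on $H^2(\widetilde{\mathbb{H}_{\mathbb{R}}\mathbb{P}^1},\mathbb{Z})$. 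Consequently the line bundle $L$ with $c_1(L)=\gamma$ satisfies $\sigma^*L\cong\overline L$. This reality condition is precisely a descent datum for the underlying real rank-two bundle $L_{\mathbb{R}}$ along $\pi$, and passing to the quotient by the induced involution produces a real orthogonal bundle $E$ on $\mathbb{H}_{\mathbb{R}}\mathbb{P}^1$ with $\pi^*E\cong L_{\mathbb{R}}$, so that $\widetilde{c_1(E)}=[\gamma]$.

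The substantive step, and the one I expect to be the main obstacle, is injectivity: if $[c_1(\pi^*E)]=[c_1(\pi^*F)]$, then $\pi^*E\cong\pi^*F$ as unoriented bundles on $\widetilde{\mathbb{H}_{\mathbb{R}}\mathbb{P}^1}$, and I must deduce that $E\cong F$ or $E\cong\widetilde F$. I would phrase this as a $\mathbb{Z}_2$-equivariant descent problem: two orthogonal bundles on $\mathbb{H}_{\mathbb{R}}\mathbb{P}^1$ with the same pullback correspond to two lifts of the deck involution $\sigma$ to the fixed bundle $G:=\pi^*E$, and any two such lifts are compared by an automorphism of $G$ intertwining them. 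Organizing the set of descent data as a nonabelian cohomology $H^1(\mathbb{Z}_2,\operatorname{Aut}(G))$, the claim is that the only ambiguity that changes the isomorphism class of the descent is the sign character of $\mathbb{Z}_2$ landing in the centre $\{\pm 1\}$ of $O(2)$, and that this ambiguity is realized exactly by $E\mapsto E\otimes\widetilde\varepsilon$. The delicate point is that when $G$ has a large automorphism group, concretely when $c_1(\pi^*E)=0$ and $G$ is the trivial bundle, extra automorphisms could a priori yield further inequivalent descent data; controlling this degenerate case is where the argument needs the most care, and it must be settled by pinning down $\operatorname{Aut}(G)$ and verifying that the equivariant structures, up to equivalence, reduce to the two that differ by the sign character. (Equivalently, one can avoid explicit descent by classifying $O(2)$-bundles on $\mathbb{H}_{\mathbb{R}}\mathbb{P}^1$ intrinsically by the pair $\bigl(w_1(E),\,\text{twisted Euler class }e(E)\bigr)$ and checking that $\otimes\widetilde\varepsilon$ negates $e(E)$ while $c_1(\pi^*E)$ realizes $e(E)$ in $H^2(\widetilde{\mathbb{H}_{\mathbb{R}}\mathbb{P}^1},\mathbb{Z})$; the same $e(E)=0$ stratum is where the bookkeeping is tightest.) Combining the three steps yields the asserted bijection $E\mapsto\widetilde{c_1(E)}$.
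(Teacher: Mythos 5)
Your proposal follows essentially the same route as the paper: both classify bundles on $\mathbb{H}_{\mathbb{R}}\mathbb{P}^1$ by $\mathbb{Z}_2$-equivariant descent from the double cover $\widetilde{\mathbb{H}_{\mathbb{R}}\mathbb{P}^1}\cong S^2\times S^2$, where rank-two orthogonal bundles are determined by $c_1$ up to sign, and both reduce the statement to the claim that each bundle upstairs carries exactly two inequivalent descent data, interchanged by tensoring with $\widetilde{\varepsilon}$. The one point you flag as delicate --- controlling the equivariant structures when $\operatorname{Aut}(\pi^*E)$ is large --- is precisely the step the paper also leaves unproved (``one can see that\dots''), so your sketch is at the same level of completeness as the published argument.
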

\begin{proof}
Since $\mathbb{H}_{\mathbb{R}}\mathbb{P}^1\cong \widetilde{\mathbb{H}_{\mathbb{R}}\mathbb{P}^1}/\mathbb{Z}_2$,
it is well-known that the isomorphism classes of real orthogonal vector bundles of rank two on  $\mathbb{H}_{\mathbb{R}}\mathbb{P}^1$ are in 1-1 correspondence with pairs $(F,\alpha)$ where $F$ is a real orthogonal vector bundle on $ \widetilde{\mathbb{H}_{\mathbb{R}}\mathbb{P}^1}$ and $\alpha:F\to\sigma^*F$ is an isomorphism such that $\alpha(\sigma(x))\alpha(x)=1$ for any $x$, see \cite{At1}. One can see that for each real orthogonal vector bundle of rank 2 on $ \widetilde{\mathbb{H}_{\mathbb{R}}\mathbb{P}^1}$
there are exactly two maps $\alpha_1$ and $\alpha_2$ with this property up to isomorphism. Moreover 
if $(F,\alpha_1)$ induces $E$ on $\mathbb{H}_{\mathbb{R}}\mathbb{P}^1$, then $(F,\alpha_2)$ induces 
$\widetilde{E}$. Hence $E\longmapsto \widetilde{c_1(E)}$ gives a 1-1 correspondence. 
\end{proof}
We have 
 $$H^2(\mathbb{H}_{\mathbb{R}}\mathbb{P}^1,\widetilde{\mathbb{Z}})=\mathbb{Z}\oplus\mathbb{Z}$$
so the T-isomorphic classes of real orthogonal vector bundles of rank two on   $\mathbb{H}_{\mathbb{R}}\mathbb{P}^1$ are classified by $\widetilde{c_1(E)}\in\mathbb{Z}\oplus\mathbb{Z}/\sim$. \\
From now on suppose that $V$ is a real orthogonal vector bundle of rank 2 on $\mathbb{H}_{\mathbb{R}}\mathbb{P}^1$.  As far as we are concerned with SDYM equations, there is no difference between these equations on $V$ and the ones on $\widetilde{V}$. Hence we just consider $V$ up to T-isomorphism. Let $g_V$ be the sub-bundle of $End(V)$ consisting of elements which are anti-symmetric with respect to the orthogonal structure of $V$. Therefore $g_V$ is just a real line bundle. It is easy to see that $g_V$ is trivial if and only if $V$ is trivial (up to T-isomorphism). We assume that $V$ is not trivial and hence $g_V$ is not trivial and hence isomorphic to $\widetilde{\varepsilon}$.   Choosing an orientation $o$ on $\pi^*V$ gives a canonical isomorphism $f:g_{(\pi^*V,o)}\to \varepsilon_{\widetilde{\mathbb{H}_{\mathbb{R}}\mathbb{P}^1}}$. If we change the orientation, this canonical isomorphism changes to
$-f$.  These isomorphisms descend to $\mathbb{H}_{\mathbb{R}}\mathbb{P}^1$. Therefore, if $V$ is not orientable, we have an isomorphism $g_V\to\widetilde{\varepsilon}$ which is canonical up to a negative sign. Under this isomorphism we have the twisted de Rham complex on $\mathbb{H}_{\mathbb{R}}\mathbb{P}^1$ 
\begin{equation}\label{TES}
0\to \Lambda^0(g_V)\overset{d}{\to}\Lambda^1(g_V)\overset{d}{\to}\Lambda^2(g_V)\overset{d}{\to}\Lambda^3(g_V)\overset{d}{\to}\Lambda^4(g_V)\to 0
\end{equation}
where $\Lambda^i(g_V)$ is the space of smooth $g_V$-valued $i$-forms on  $\mathbb{H}_{\mathbb{R}}\mathbb{P}^1$. 
 To $V$ we can associate two invariants namely its first twisted Chern class $\widetilde{c_1(V)}\in \widetilde{H}^2(\mathbb{H}_{\mathbb{R}}\mathbb{P}^1,\widetilde{\mathbb{Z}})$ and   its first  Pontryagin class  $p_1(V)\in H^4(\mathbb{H}_{\mathbb{R}}\mathbb{P}^1,\mathbb{Z})$.  Then we have the following proposition relating these two invariants
\begin{proposition}
(a) Suppose that $\nabla$ is an $O(2)$-connection on $E$ and $C\in \Lambda^2(g_V)$ is its curvature.  Then 
$C$ is closed as an element in $\Lambda^2(g_V)$. Moreover $[C]\in \bar{H}^2_{DRT} $ in the second cohomology group of the twisted exact sequence (sequence \ref{TES}) 
is a well-defined element which only depends on $V$ and $[\frac{C}{2\pi}]=\widetilde{c_1(E)}$ under the natural isomorphism 
$$\bar{H}^2(\mathbb{H}_{\mathbb{R}}\mathbb{P}^1,\widetilde{\mathbb{R}})\to
\bar{H}^2_{DRT} $$
(b) The cup product $\widetilde{c_1(V)}\cup \widetilde{c_1(V)}\in H^4(\mathbb{H}_{\mathbb{R}}\mathbb{P}^1,\mathbb{Z})$ is well defined and we have 
$p_1(V)=-\widetilde{c_1(V)}\cup \widetilde{c_1(V)}$.
\end{proposition}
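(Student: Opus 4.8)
The plan is to exploit throughout the fact that $o(2)$ is abelian, and to reduce every genuinely topological assertion to the orientable double cover $\pi:\widetilde{\mathbb{H}_{\mathbb{R}}\mathbb{P}^1}\cong S^2\times S^2\to\mathbb{H}_{\mathbb{R}}\mathbb{P}^1$, where ordinary Chern--Weil theory applies. For part (a) I would first note that the adjoint action of $O(2)$ on $o(2)\cong\mathbb{R}$ factors through $\det:O(2)\to\{\pm 1\}$, so the connection $\nabla$ induces on $g_V$ a connection whose structure group reduces to $\mathbb{Z}_2$; this is precisely the flat structure underlying the twisted differential $d$ of sequence \ref{TES}. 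Because $o(2)$ is abelian, the covariant exterior derivative of the curvature coincides with $d$ applied to $C$, so the Bianchi identity $d_\nabla C=0$ becomes $dC=0$ in $\Lambda^2(g_V)$, giving closedness. If $\nabla'=\nabla+a$ is a second $O(2)$-connection then $a\in\Lambda^1(g_V)$ and, again by abelianness, $C'=C+da$, so $[C]\in H^2_{DRT}$ is independent of $\nabla$. Finally, the isomorphism $g_V\cong\widetilde{\varepsilon}$ fixed earlier is canonical only up to an overall sign, and replacing it by its negative sends $C\mapsto -C$; this is exactly why $[C]$ is well defined only in the quotient $\bar{H}^2_{DRT}$.

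To identify this class with $\widetilde{c_1(V)}$ I would pull everything back along $\pi$. On $\widetilde{\mathbb{H}_{\mathbb{R}}\mathbb{P}^1}$ the bundle $\pi^*V$ is orientable, hence a genuine $SO(2)$-bundle, and $\pi^*\nabla$ is an ordinary $SO(2)=U(1)$ connection with curvature $\pi^*C$. Classical Chern--Weil theory gives $[\pi^*C/2\pi]=c_1(\pi^*V)$. Since $\widetilde{c_1(V)}$ was defined as the class corresponding to $c_1(\pi^*V)$ under $H^2(\mathbb{H}_{\mathbb{R}}\mathbb{P}^1,\widetilde{\mathbb{Z}})\to H^2(\widetilde{\mathbb{H}_{\mathbb{R}}\mathbb{P}^1},\mathbb{Z})$, and this is the isomorphism intertwining the twisted de Rham isomorphism $\bar{H}^2(\mathbb{H}_{\mathbb{R}}\mathbb{P}^1,\widetilde{\mathbb{R}})\to\bar{H}^2_{DRT}$ with honest pullback, descending the identity yields $[C/2\pi]=\widetilde{c_1(V)}$.

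For part (b), the well-definedness of $\widetilde{c_1(V)}\cup\widetilde{c_1(V)}$ is formal: although $\widetilde{c_1(V)}$ is defined only up to sign in $\bar{H}^2(\mathbb{H}_{\mathbb{R}}\mathbb{P}^1,\widetilde{\mathbb{Z}})$, the pairing $\widetilde{\mathbb{Z}}\otimes\widetilde{\mathbb{Z}}\to\mathbb{Z}$ lands in untwisted coefficients and $(\varepsilon a)\cup(\varepsilon a)=a\cup a$ for $\varepsilon=\pm 1$, so the self-product descends to a well-defined element of $H^4(\mathbb{H}_{\mathbb{R}}\mathbb{P}^1,\mathbb{Z})$. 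For the formula I would again pass to the cover, where $\pi^*\big(\widetilde{c_1(V)}\cup\widetilde{c_1(V)}\big)=c_1(\pi^*V)^2$ and $\pi^*p_1(V)=p_1(\pi^*V)$. For the oriented rank-two bundle $\pi^*V$ the Chern--Weil computation of $p_1(\pi^*V)$ from the $o(2)$-valued curvature relates it to $c_1(\pi^*V)^2$; tracking the normalization, in particular the factor $\operatorname{tr}(J^2)=-2$ coming from the generator $J$ of $o(2)$, produces the asserted minus sign. Because $H^4(\mathbb{H}_{\mathbb{R}}\mathbb{P}^1,\mathbb{Z})\cong\mathbb{Z}$ is torsion-free, the transfer shows $\pi^*$ is injective on it, so the identity $p_1(V)=-\widetilde{c_1(V)}\cup\widetilde{c_1(V)}$ descends from the cover.

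The hard part is the sign bookkeeping in (b): one must match the normalization of $p_1$ against the Chern--Weil expression for the $o(2)$-curvature, confirm that pulling back the twisted cup product to the cover introduces no extra sign beyond the $\operatorname{tr}(J^2)=-2$ factor, and verify that the descent $H^4(\widetilde{\mathbb{H}_{\mathbb{R}}\mathbb{P}^1},\mathbb{Z})\leftarrow H^4(\mathbb{H}_{\mathbb{R}}\mathbb{P}^1,\mathbb{Z})$ is injective integrally rather than merely rationally, which is where the torsion-freeness of the base's top cohomology is essential.
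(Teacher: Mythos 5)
Your argument is correct and follows essentially the same route as the paper: reduce to the oriented double cover where $\pi^*V$ becomes an $SO(2)=U(1)$ bundle, apply abelian Chern--Weil theory for part (a), and use the identity $\operatorname{tr}(C\wedge C)=-2\det(C)$ for an $o(2)$-valued curvature (your $\operatorname{tr}(J^2)=-2$) for part (b). The only cosmetic difference is that you carry out (b) upstairs and descend by a transfer argument, whereas the paper computes directly with the twisted forms on $\mathbb{H}_{\mathbb{R}}\mathbb{P}^1$; you also make explicit the closedness of $C$ and its independence of $\nabla$, which the paper leaves implicit.
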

\begin{proof}
(a) It is enough to prove the corresponding statement for a real orthogonal vector bundle $V$ of rank 2 on 
 $ \widetilde{\mathbb{H}_{\mathbb{R}}\mathbb{P}^1}$.  Let $\nabla$ be an $O(2)$-connection on $V$. We fix an orientation on $V$. Then $\nabla$ is also an $SO(2)$-connection. We can consider a complex structure on $V$ compatible with the orientation and the orthogonal structure. Then $V$ becomes a complex line bundle and the curvature of $\nabla$ as an $U(1)$-connection is a purely imaginary 2-form $\omega$ under the canonical isomorphism $g_V\to i\mathbb{R}$. We also have the canonical isomorphism $g_V\to\mathbb{R}$ as discussed  above. These two isomorphisms are the same up to multiplication by $i$. The proof is complete by noting that $[\frac{i}{2\pi}\omega]=c_1(V)$.  \\
 (b) If we define $\widetilde{c_1(V)}\cup \widetilde{c_1(V)}$    using isomorphism $f$ or $-f$ we get the same answer so we have a well-defined cohomology class       
$\widetilde{c_1(V)}\cup \widetilde{c_1(V)}\in H^4(\mathbb{H}_{\mathbb{R}}\mathbb{P}^1,\mathbb{Z})$.  It is easy to see that $\widetilde{c_1(V)}\cup \widetilde{c_1(V)}=[\frac{-tr(C\wedge C)}{8\pi^2}]$. On the other hand 
$$p_1(E)=[\frac{-\det(C)}{4\pi^2}] $$
 We claim that $tr(C\wedge C)=-2 \det(C)$. In fact in an orthogonal frame the curvature is given by $\begin{pmatrix}
 0& -\alpha\\
 \alpha& 0
 \end{pmatrix}$
 where $\alpha$ is just a 2-form. Then it is easy to see that $tr(C\wedge C)=-2 \det(C)=-2\alpha\wedge\alpha$.  Therefore we have 
 $p_1(V)=-\widetilde{c_1(V)}\cup \widetilde{c_1(V)}$
 
 \end{proof}
Moreover we have the following observation relating the topological charge and the first Pontryagin class.
\begin{proposition}
The topological charge of an $O(2)$-SDYM solution on $\mathbb{H}_{\mathbb{R}}\mathbb{P}^1$ is just   the negative of the first Pontryagin class of 
its vector bundle evaluated on the fundamental class of $\mathbb{H}_{\mathbb{R}}\mathbb{P}^1$.
\end{proposition}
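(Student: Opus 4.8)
The plan is to reduce the statement entirely to the Chern--Weil identities recorded in the preceding proposition, so that what remains is only the bookkeeping of normalization constants. First I would fix the definition of the topological charge for an arbitrary $O(2)$-SDYM solution on $\mathbb{H}_{\mathbb{R}}\mathbb{P}^1$, extending the formula \ref{TC} used for the basic split instanton: if $\nabla$ is the connection and $C\in\Lambda^2(g_V)$ is its curvature, then
$$k=\frac{-1}{8\pi^2}\int_{\mathbb{H}_{\mathbb{R}}\mathbb{P}^1}\operatorname{tr}(C\wedge C).$$
Because the solution extends across the conformal compactification, $C$ is a smooth $g_V$-valued $2$-form on the compact manifold $\mathbb{H}_{\mathbb{R}}\mathbb{P}^1$ and the integrand is a genuine top-degree form, so this integral is well defined.

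Next I would invoke the two facts already established. Although $C$ takes values in the (possibly non-trivial, i.e.\ $\widetilde{\varepsilon}$-valued) real line bundle $g_V$, both $\operatorname{tr}(C\wedge C)$ and $\det(C)$ are quadratic in $C$, so the sign ambiguity of a local orthogonal frame cancels and each defines an honest global $4$-form on $\mathbb{H}_{\mathbb{R}}\mathbb{P}^1$. In an orthogonal frame one writes $C=\left(\begin{smallmatrix}0&-\alpha\\\alpha&0\end{smallmatrix}\right)$, whence $\det(C)=\alpha\wedge\alpha$ and $\operatorname{tr}(C\wedge C)=-2\,\alpha\wedge\alpha$, giving the key algebraic identity $\operatorname{tr}(C\wedge C)=-2\det(C)$; moreover the Chern--Weil representative of the first Pontryagin class is $p_1(V)=\left[\frac{-\det(C)}{4\pi^2}\right]$.

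The computation is then immediate: substituting the identity into the definition of $k$ gives
$$k=\frac{-1}{8\pi^2}\int_{\mathbb{H}_{\mathbb{R}}\mathbb{P}^1}\bigl(-2\det(C)\bigr)=\frac{1}{4\pi^2}\int_{\mathbb{H}_{\mathbb{R}}\mathbb{P}^1}\det(C)=-\int_{\mathbb{H}_{\mathbb{R}}\mathbb{P}^1}\frac{-\det(C)}{4\pi^2},$$
and the right-hand side is precisely $-\,p_1(V)$ evaluated on the fundamental class, as claimed.

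The step requiring the most care --- the only content beyond arithmetic --- is ensuring that the de Rham $4$-form $\frac{-\det(C)}{4\pi^2}$ genuinely represents the integral class $p_1(V)\in H^4(\mathbb{H}_{\mathbb{R}}\mathbb{P}^1,\mathbb{Z})$ and that its pairing with the fundamental class is computed by honest integration. This is the standard Chern--Weil statement, but in the present twisted setting one must confirm that passing to the orientation double cover $\widetilde{\mathbb{H}_{\mathbb{R}}\mathbb{P}^1}$, where $g_V$ trivializes, does not alter the normalization --- which is exactly what the previous proposition's Chern--Weil identity was arranged to guarantee. With that in hand the proof is complete.
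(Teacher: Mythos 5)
Your proof is correct and is exactly the argument the paper intends: the paper states this proposition as an unproved observation, but the two identities you invoke, $\operatorname{tr}(C\wedge C)=-2\det(C)$ and $p_1(V)=\left[\frac{-\det(C)}{4\pi^2}\right]$, are precisely what is established in the proof of the immediately preceding proposition, and the rest is the arithmetic you carry out. Your added remark that the quadratic expressions in $C$ descend to honest global $4$-forms despite the $\widetilde{\varepsilon}$-twisting of $g_V$ is a worthwhile point that the paper leaves implicit.
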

  
 \end{subsection}                   
    
  \begin{subsection}{$O(2)$-ASDYM Solutions of topological charge $-1$}
 Suppose that $V$ is a real orthogonal vector bundle of rank 2 on $\mathbb{H}_{\mathbb{R}}\mathbb{P}^1$. We want to describe the moduli space of $O(2)$-solutions on $V$. 
 Let $G_V$ be the set of all gauge transformations of $V$, i.e. the group of all bundle isomorphisms $f:V\to V$ which preserve the orthogonal structure. Let $\mathcal{A}_V$ be the space of $O(2)$-connections on $V$. It is well-known that $\mathcal{A}_V$ is an affine space on $\Lambda^1(g_V)$. The group of gauge transformations acts on $\mathcal{A}_V$ via $g.\nabla:=g\nabla g^{-1}$. We assume that $V$ admits an $O(2)$-ASDYM solution and let $\mathcal{B}_V\subset \mathcal{A}_V$ be the space of all $O(2)$-solutions on $V$. 
It is easy to see that $G_V$ preserves $\mathcal{B}_V$. By the moduli space of $O(2)$-solutions on $V$  we mean $\mathcal{M}_V:=\mathcal{B}_V/G_V$. Since the structure group, namely $O(2)$, is not connected, $G_V$ is not also connected and we have $G_V=G^{+}_V\cup G^{-}_V$ where $G^{\pm}_V$ is the set of gauge transformations with determinant $\pm$. We set  $\mathcal{M}^+_V:=\mathcal{B}_V/G^+_V$. We would like to describe $\mathcal{M}^+_V:=\mathcal{B}_V/G^+_V$.  We note that $G^{+}_V$ is a commutative group. Let $\mathcal{G}^{+}_V$ be the sheaf of gauge transformations with determinant 1. Then we have an exponential map $\exp:g_V\to\mathcal{G}^+_V$ coming form the pointwise exponential map.   It is easy to see that the following sequence of sheaves 
$$0\to\widetilde{\mathbb{R}}\to g_V\overset{\exp}{\to}\mathcal{G}^{+}_V\to0$$
is exact.  The corresponding exact sequence of global sections and the fact that $H^1(\mathbb{H}_{\mathbb{R}}\mathbb{P}^1,\widetilde{\mathbb{R}})=0$ implies that 
$\exp:\Lambda^0(g_V)\to G^+_V$ is onto.  Now suppose that $g\in G^+_V$ and $\nabla\in \mathcal{A}_V$. Then $g=\exp(-h)$ for some $h\in\Lambda^0(g_V)$ and hence we have 
$$g.\nabla=\nabla+gdg^{-1}=\nabla+dh$$
in other words two $O(2)$-connections on $V$ are gauge equivalent (under the action of  $G^+_V$) if and only if their difference is an \emph{exact} $g_V$-valued one form. This implies that $\mathcal{A}_V/G^+_V$ is isomorphic to $\Lambda^1(g_V)/d\Lambda^0(g_V)$. Moreover if we fix  $\nabla_0\in \mathcal{B}_V$ then $\nabla_0+\alpha$ is ASD if and only if $\alpha\in \Lambda^1(g_V)$ is ASD. Therefore 
 $\mathcal{M}^+_V$ is isomorphic to $\Lambda^1_{ASD}(g_V)/d\Lambda^0(g_V)$ where $\Lambda^1_{ASD}(g_V)$ is the space of $g_V$-valued 1-forms $\alpha$ for which $d\alpha$ is ASD.  Therefore it is important to consider $\Lambda^2_{ASD}(g_V)$ the space of closed ASD $g_V$-valued 2-forms. In other words we are led to consider  (twisted) Maxwell's equations on $\mathbb{H}_{\mathbb{R}}\mathbb{P}^1$. A description of the solutions  is given by Guilleman and Sternberg \cite{GS}
 \begin{theorem}\label{GS}
 There is an $SL(4,\mathbb{R})$-equivariant  transformation 
 $$R:\Lambda^3_{\mathbb{R}\mathbb{P}^3}\to \Lambda^2_{ASD}(g_V)$$
 which is a bijection onto the space of closed ASD $g_V$-valued 2-forms. Moreover the space of exact 3-forms is mapped onto the space of exact ASD 2-forms. Here $\mathbb{R}\mathbb{P}^3$ is the real projective 3-space and $\Lambda^3_{\mathbb{R}\mathbb{P}^3}$ is the space of 3-forms on it. 
  \end{theorem}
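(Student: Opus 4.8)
The plan is to realize $R$ as a Penrose-type integral transform through the natural double fibration attaching $\mathbb{R}\mathbb{P}^3$ to $\mathbb{H}_{\mathbb{R}}\mathbb{P}^1\cong Gr(2,\mathbb{R}^4)$. Let $F=\{(\ell,P):\ell\subset P\}$ be the flag manifold of a line $\ell$ (a point of $\mathbb{R}\mathbb{P}^3=\mathbb{P}(\mathbb{R}^4)$) contained in a $2$-plane $P$ (a point of $Gr(2,\mathbb{R}^4)$), with the two projections
\begin{equation*}
\mathbb{R}\mathbb{P}^3 \xleftarrow{\ \mu\ } F \xrightarrow{\ \nu\ } Gr(2,\mathbb{R}^4).
\end{equation*}
Counting dimensions gives $\dim F=5$; the fibers of $\mu$ are copies of $\mathbb{R}\mathbb{P}^2$ (planes through a fixed line) and the fibers of $\nu$ are copies of $\mathbb{R}\mathbb{P}^1$ (lines inside a fixed plane). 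All three spaces are $SL(4,\mathbb{R})$-homogeneous and $\mu,\nu$ are equivariant, so any transform built from $\mu^*$ and integration along the fibers of $\nu$ is automatically $SL(4,\mathbb{R})$-equivariant; this settles the equivariance claim at no cost.

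I would then define $R(\omega):=\nu_*\mu^*\omega$, pulling a $3$-form on $\mathbb{R}\mathbb{P}^3$ back to a $3$-form on $F$ and integrating it along the one-dimensional fibers of $\nu$ to obtain a $2$-form on the Grassmannian (the degree shift $3-1=2$ is exactly right). The $g_V$-twist, i.e. the passage to the nontrivial orientation bundle $\widetilde{\varepsilon}$, is absorbed by performing this fiber integration with values in the bundle of orientations of the $\mathbb{R}\mathbb{P}^1$-fibers, which over $Gr(2,\mathbb{R}^4)$ is precisely the nontrivial double cover measured by $\widetilde{\varepsilon}$. Closedness then requires no work: since $\omega$ is a top-degree form on $\mathbb{R}\mathbb{P}^3$ we have $d\omega=0$; the $\mathbb{R}\mathbb{P}^1$-fibers are closed so Stokes gives $d\nu_*=\nu_*d$; and $d$ commutes with $\mu^*$, whence $dR(\omega)=\nu_*\mu^*d\omega=0$.

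The substantive geometric point is that $R(\omega)$ is anti-self-dual. Here I would argue that the fiber $\nu^{-1}(P)$, a line's worth of points of $\mathbb{R}\mathbb{P}^3$ lying in the plane $P$, sweeps out exactly one family of null planes through the corresponding point of $\mathbb{H}_{\mathbb{R}}\mathbb{P}^1$, and that integrating over this family forces the result to satisfy the split ASDYM system $F_{11,21}=F_{12,22}=F_{11,22}+F_{12,21}=0$ recorded in the preliminaries. Concretely I would verify this in the affine chart $\mathbb{H}_{\mathbb{R}}=\{[Z:1]\}$ by writing out $\mu^*\omega$ in coordinates adapted to the incidence relation and differentiating under the fiber integral, reducing the ASD condition to the linear identity already guaranteed by Lemma \ref{AL}(b).

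The main obstacle is proving that $R$ is a \emph{bijection} onto the closed ASD $g_V$-valued $2$-forms, which is the analytic content of the Guillemin--Sternberg theorem and amounts to inverting a John/Radon transform in split signature. For injectivity I would pair $R=\nu_*\mu^*$ with the dual transform $\mu_*\nu^*$: classical Radon inversion (equivalently the Penrose cohomology computation on $\mathbb{R}\mathbb{P}^3$) shows that the composite is, up to a nonzero constant, a differential operator acting trivially on the closed ASD forms in the image. For surjectivity I would show that every closed ASD $g_V$-valued $2$-form solves the twisted ultrahyperbolic Maxwell equation on $\mathbb{H}_{\mathbb{R}}\mathbb{P}^1$, and that by the Penrose transform on the compact twistor space $\mathbb{R}\mathbb{P}^3$ each such solution arises from a $3$-form; the care required here is entirely in tracking the nontrivial twist $\widetilde{\varepsilon}$ globally and in handling the compactness of the fibers. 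Finally, the exact-to-exact statement is a formal consequence of the chain-map property: if $\omega=d\eta$ then $R(\omega)=\nu_*\mu^*d\eta=d(\nu_*\mu^*\eta)$ is the exterior derivative of a $g_V$-valued $1$-form, hence exact, and bijectivity promotes this to a surjection onto the exact ASD $2$-forms.
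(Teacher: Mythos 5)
First, a point of reference: the paper does not prove this theorem at all --- it is imported verbatim from Guillemin--Sternberg \cite{GS} (``An ultra-hyperbolic analogue of the Robinson--Kerr theorem''), and the text immediately before the statement says as much. So there is no in-paper proof to compare against; what you have written is an attempted reconstruction of the cited result. Your setup is the right one and matches the mechanism behind \cite{GS} and the X-ray transform used later in the paper: the double fibration $\mathbb{R}\mathbb{P}^3 \leftarrow F \rightarrow Gr(2,\mathbb{R}^4)$ with $\mathbb{R}\mathbb{P}^2$- and $\mathbb{R}\mathbb{P}^1$-fibers, the degree count $3-1=2$, the automatic $SL(4,\mathbb{R})$-equivariance, the identification of the relative orientation bundle of the $\mathbb{R}\mathbb{P}^1$-fibration with $\widetilde{\varepsilon}\cong g_V$ (which is exactly why the target must be twisted), the commutation $d\nu_*=\nu_* d$ giving closedness and the exact-to-exact direction. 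These parts are sound, modulo the coordinate verification of anti-self-duality that you defer but correctly locate.

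The genuine gap is that the bijectivity onto closed ASD $g_V$-valued $2$-forms --- which is the entire analytic content of the theorem --- is not established. For injectivity, the assertion that ``the composite $\mu_*\nu^* \circ \nu_*\mu^*$ is, up to a constant, a differential operator acting trivially on the image'' is not an argument; on a compact real double fibration the composite is an integral operator whose invertibility is precisely John-type inversion, and it must be proved (or correctly quoted, e.g.\ from Sparling \cite{Sp}), not asserted. For surjectivity your reasoning is circular: you say each closed ASD form solves a twisted ultrahyperbolic Maxwell equation and ``by the Penrose transform on $\mathbb{R}\mathbb{P}^3$ each such solution arises from a $3$-form'' --- but that last clause \emph{is} the surjectivity of $R$, the very thing being proven. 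There is also a small unproved step at the end: knowing $R$ is a bijection and maps exact forms to exact forms does not by itself give that exact ASD $2$-forms come \emph{only} from exact $3$-forms; since $H^3(\mathbb{R}\mathbb{P}^3,\mathbb{R})\cong\mathbb{R}$ one must additionally check that $R(\mathrm{vol})$ is not exact in the twisted de Rham complex (which does follow from the nonvanishing of the topological charge of the basic split anti-instanton, but needs to be said). As written, the proposal is a correct scaffolding around a theorem whose core remains uncited and unproved; the honest options are to supply the inversion formula for this John transform or to do what the paper does and attribute the statement to \cite{GS}.
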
 
This theorem implies the following
\begin{theorem}
Suppose that $\mathcal{B}_V$ is not empty. Then there is an isomorphism
$$\mathcal{M}^+_V\cong \Lambda^3_{\mathbb{R}\mathbb{P}^3, \text{exact}}$$
where $\Lambda^3_{\mathbb{R}\mathbb{P}^3, \text{exact}}$ is the space of exact 3-forms on $\mathbb{R}\mathbb{P}^3$. In particular the moduli space of $O(2)$-ASDYM solutions on $V$ is infinite dimensional. 
\end{theorem}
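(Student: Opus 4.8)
The plan is to construct the isomorphism as a composition of two maps: the exterior derivative, which will identify $\mathcal{M}^+_V$ with the space of exact ASD $g_V$-valued $2$-forms, followed by the inverse of the Guillemin--Sternberg transformation $R$ of Theorem \ref{GS}, which identifies that space with the exact $3$-forms on $\mathbb{R}\mathbb{P}^3$. First I would regard the exterior derivative as a map $d:\Lambda^1_{ASD}(g_V)\to\Lambda^2(g_V)$. By definition $\alpha\in\Lambda^1_{ASD}(g_V)$ means $d\alpha$ is ASD, and $d\alpha$ is automatically closed; conversely every exact ASD $2$-form is $d\alpha$ for some such $\alpha$. Hence the image of $d$ is exactly the space of exact ASD $g_V$-valued $2$-forms. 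Passing to the quotient by $d\Lambda^0(g_V)$ and recalling from the preceding discussion that $\mathcal{M}^+_V\cong \Lambda^1_{ASD}(g_V)/d\Lambda^0(g_V)$, we obtain an induced surjection
$$\bar d:\mathcal{M}^+_V\cong \Lambda^1_{ASD}(g_V)/d\Lambda^0(g_V)\longrightarrow \{\text{exact ASD } g_V\text{-valued } 2\text{-forms}\}.$$

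The crucial step is the injectivity of $\bar d$, which is where the topology of $\mathbb{H}_{\mathbb{R}}\mathbb{P}^1$ enters. Its kernel consists of classes represented by $\alpha\in\Lambda^1(g_V)$ with $d\alpha=0$, taken modulo $d\Lambda^0(g_V)$; that is, closed $g_V$-valued $1$-forms modulo exact ones (note that closed forms do lie in $\Lambda^1_{ASD}(g_V)$, since $0$ is ASD). This is precisely the first cohomology of the twisted de Rham complex (\ref{TES}), which computes $H^1(\mathbb{H}_{\mathbb{R}}\mathbb{P}^1,\widetilde{\mathbb{R}})$. Since $g_V\cong\widetilde{\varepsilon}$ and $H^1(\mathbb{H}_{\mathbb{R}}\mathbb{P}^1,\widetilde{\mathbb{R}})=0$ --- the very vanishing already invoked to prove surjectivity of $\exp:\Lambda^0(g_V)\to G^+_V$ --- the kernel is trivial, so $\bar d$ is an isomorphism.

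Finally I would invoke Theorem \ref{GS}: the transformation $R$ is a bijection onto the closed ASD $2$-forms and carries exact $3$-forms onto exact ASD $2$-forms, so it restricts to a bijection from $\Lambda^3_{\mathbb{R}\mathbb{P}^3, \text{exact}}$ onto the exact ASD $2$-forms. Composing $\bar d$ with the restriction of $R^{-1}$ yields $\mathcal{M}^+_V\cong\Lambda^3_{\mathbb{R}\mathbb{P}^3, \text{exact}}$. Infinite-dimensionality is then immediate: on the compact oriented $3$-manifold $\mathbb{R}\mathbb{P}^3$ a top-degree form is exact precisely when its integral vanishes, so the exact $3$-forms form the kernel of the integration functional, an infinite-dimensional space. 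I expect the only genuine obstacle to be the bookkeeping in the injectivity step --- confirming that the kernel of $\bar d$ is the \emph{twisted} $H^1$ and that it vanishes --- since, granting Theorem \ref{GS} and the earlier identifications, the remainder is formal.
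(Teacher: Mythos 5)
Your proof is correct and takes essentially the same route as the paper's: the paper sends a connection to its curvature, which relative to the base point $\nabla_0$ is exactly your map $\bar d$ on $\Lambda^1_{ASD}(g_V)/d\Lambda^0(g_V)$, and then invokes Theorem \ref{GS} to pass to exact $3$-forms on $\mathbb{R}\mathbb{P}^3$. You are in fact somewhat more careful than the paper, which never explicitly checks injectivity; your identification of the kernel of $\bar d$ with the twisted first de Rham cohomology $H^1(\mathbb{H}_{\mathbb{R}}\mathbb{P}^1,\widetilde{\mathbb{R}})=0$ supplies that missing detail.
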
 
 \begin{proof}
 Since the action of $G^+_V$ on the curvature $C_{\nabla}$ of a connection $\nabla\in\mathcal{A}_V$ is trivial, we obtain a well-defined map
 $$\phi:\mathcal{M}^+_V\to\Lambda^2(g_V)$$
 by sending $\nabla$ to $C_\nabla$.   As we saw,  $C_\nabla$ is independent of $\nabla$ up to addition by an exact ASD $g_V$-valued 2-form  and it only depends on $V$. Using theorem \ref{GS}, we obtain the isomorphism.
 \end{proof}
The above two theorems give a restriction on the twisted Chern class of $V$. More precisely, since  $H^3(\mathbb{R}\mathbb{P}^3,\mathbb{R})\cong \mathbb{R}$ we see that not every twisted Chern class is possible for real orthogonal vector bundles  of rank two which admit an $O(2)$-ASDYM solution. In particular we see that the topological charge of the solution as defined by \ref{TC} determines the vector bundle up to T-isomorphism. Therefore we can restate the last theorem as 
\begin{theorem}\label{MS}
The moduli space of $O(2)$-ASDYM solutions of charge $-1$ on $\mathbb{H}_{\mathbb{R}}\mathbb{P}^1$ is isomorphic to  $\Lambda^3_{\mathbb{R}\mathbb{P}^3, \text{exact}}$.
\end{theorem}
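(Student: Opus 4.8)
The plan is to obtain the statement as a corollary of the preceding theorem, which already identifies $\mathcal{M}^+_V$ with $\Lambda^3_{\mathbb{R}\mathbb{P}^3,\text{exact}}$ for every rank-two orthogonal bundle $V$ carrying a solution. All that remains is to show that the single numerical condition ``charge $-1$'' pins down the underlying bundle $V$ up to T-isomorphism, and that this bundle does carry a solution. Since the (A)SDYM problems on $V$ and on $\widetilde{V}$ coincide, determining $V$ only up to T-isomorphism is exactly what is needed.

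First I would settle existence. The basic split anti-instanton is an $O(2)$-ASDYM solution of charge $-1$, and by Proposition~\ref{UP} it is carried by the restriction $V_0$ of the universal bundle $\mathcal{P}$ to $\mathbb{H}_{\mathbb{R}}\mathbb{P}^1$. Hence $\mathcal{B}_{V_0}\neq\emptyset$, and the preceding theorem applies to give $\mathcal{M}^+_{V_0}\cong\Lambda^3_{\mathbb{R}\mathbb{P}^3,\text{exact}}$.

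The substantive step is uniqueness of the bundle. Rank-two orthogonal bundles are classified up to T-isomorphism by $\widetilde{c_1(V)}\in\bar{H}^2(\mathbb{H}_{\mathbb{R}}\mathbb{P}^1,\widetilde{\mathbb{Z}})=\mathbb{Z}\oplus\mathbb{Z}/\sim$, and the propositions relating charge, $p_1$ and $\widetilde{c_1}$ express the charge as the self-intersection $\widetilde{c_1(V)}\cup\widetilde{c_1(V)}$ evaluated on the fundamental class. Admitting a solution is a severe constraint: by Theorem~\ref{GS} the curvature equals $R(\eta)$ for a closed $3$-form $\eta$, so the real image of $\widetilde{c_1(V)}$ is forced to lie in the fixed one-dimensional subspace determined by $H^3(\mathbb{R}\mathbb{P}^3,\mathbb{R})\cong\mathbb{R}$. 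On this line the self-intersection is a one-variable quadratic form; since $V_0$ already realizes the value $-1$ at a lattice point $v_0$, any further lattice solution of $\widetilde{c_1}\cup\widetilde{c_1}=-1$ is $tv_0$ with $t^2=1$, hence $\pm v_0$. Thus every charge $-1$ bundle admitting a solution is T-isomorphic to $V_0$, which together with the existence step yields the claimed identification.

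The point requiring the most care, and the main potential obstacle, is the passage from the quotient $\mathcal{M}^+_V$ by the identity component $G^+_V$ to the genuine moduli space $\mathcal{B}_V/G_V$. A reflection $g\in G^-_V$ acts as $-1$ on the line bundle $g_V$ and hence sends the curvature to its negative, so on $\mathcal{M}^+_V$ it induces the involution $\eta\mapsto-\eta$ of $\Lambda^3_{\mathbb{R}\mathbb{P}^3,\text{exact}}$. One must therefore be explicit that ``moduli space'' here means the $G^+_V$-quotient $\mathcal{M}^+_V$, for which the identification holds on the nose; the full $G_V$-quotient would be the further $\mathbb{Z}_2$-quotient by this sign involution. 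I would accordingly state and prove the theorem in the $\mathcal{M}^+_V$ form, consistent with the preceding development.
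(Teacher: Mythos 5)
Your proposal is correct and follows essentially the same route as the paper: the paper's own proof is the one-line observation that the basic split anti-instanton has charge $-1$, with the bundle-uniqueness step (charge determines $V$ up to T-isomorphism via the constraint $H^3(\mathbb{R}\mathbb{P}^3,\mathbb{R})\cong\mathbb{R}$ on $\widetilde{c_1(V)}$) relegated to the preceding paragraph, exactly as you reconstruct it. Your closing caveat that the identification is really with $\mathcal{M}^+_V$ and that the full $G_V$-quotient would introduce a further $\mathbb{Z}_2$ acting by $\eta\mapsto-\eta$ is a point the paper glosses over, and is worth keeping.
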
 
 \begin{proof}
 Note that the basic split anti-instanton has topological charge $-1$.   
 \end{proof}
 
  \end{subsection}

          \begin{subsection}{Moduli space of split anti-instantons of topological charge $-1$}
 As we saw in the last section, the moduli space of $O(2)$-ASDYM solutions of topological charge $-1$   on $\mathbb{H}_{\mathbb{R}}\mathbb{P}^1$ is infinite dimensional. However the basic split anti-instanton  is a very special $O(2)$-ASDYM solution as we will see.  Let us recall the geometrical construction of the basic split anti-instanton on  $\mathbb{H}_{\mathbb{R}}\mathbb{P}^1$. 
Let $\mathcal{P}$ be the universal vector bundle on  $ \mathbb{H}_{\mathbb{R}}\mathbb{P}^1$. Setting
 $X= \begin{pmatrix}
 x_{11}& x_{12}\\
 x_{21}& x_{22}
 \end{pmatrix}$ and  $Y= \begin{pmatrix}
 y_{11}& y_{12}\\
 y_{21}& y_{22}
 \end{pmatrix}$, then the fiber of  $\mathcal{P}$ at $[X:Y]$ is the plane in $\mathbb{R}^4$ spanned by 
  $ \begin{pmatrix}
 x_{11}& x_{12}\\
 x_{21}& x_{22}\\
  y_{11}& y_{12}\\
 y_{21}& y_{22}\\
  \end{pmatrix}$. Then consider the standard inner product $(\quad,\quad)$ on $\mathbb{R}^4$, i.e.
  $$( \begin{pmatrix}
 a_1\\
 a_2\\
  a_3\\
 a_4\\
  \end{pmatrix},\begin{pmatrix}
 b_1\\
 b_2\\
  b_3\\
 b_4\\
  \end{pmatrix})=a_1b_1+a_2b_2+a_3b_3+a_4b_4.
  $$
 Finally let $\nabla$  be the projection of the trivial connection on $\mathbb{R}^4$ onto $\mathcal{P}$. Then $(\mathcal{P},\nabla)$ is the basic split instanton which is an $O(2)$-ASDYM solution of topological charge $-1$.     
Since the ASDYM equations  are conformally invariant, the pull back of any ASDYM solution under a conformal map of $ \mathbb{H}_{\mathbb{R}}\mathbb{P}^1$ is again an ASDYM solution.  It is known that the proper conformal group of $ \mathbb{H}_{\mathbb{R}}\mathbb{P}^1$  is isomorphic to $SL(4,\mathbb{R})/\{\pm1\}$. More precisely, given any    $g=  \begin{pmatrix}
 a& b\\
 c& d
 \end{pmatrix}\in SL(4,\mathbb{R})$ with $a,b,c,d\in \mathbb{H}_{\mathbb{R}}$ the map
 $$g([X:Y]):=[aX+bY:cX+dY]$$
is a conformal map and all the proper conformal maps of $ \mathbb{H}_{\mathbb{R}}\mathbb{P}^1$  are achieved in this way. Hence $g$ acting on the basic split anti-instanton gives another solution of $O(2)$-ASDYM of topological charge $-1$.   It is easy to see the action of $g$ on $(\mathcal{P},\nabla)$ geometrically. Consider the sub-bundle of the trivial bundle on $ \mathbb{H}_{\mathbb{R}}\mathbb{P}^1$ with fibers $\mathbb{R}^4$ 
whose fiber at $[X:Y]$ is spanned by 
$$g \begin{pmatrix} 
 X\\
 Y
 \end{pmatrix}=\begin{pmatrix}
 a\begin{pmatrix}
 x_{11}& x_{12}\\
 x_{21}& x_{22}\\
 \end{pmatrix}+b\begin{pmatrix}
 y_{11}& y_{12}\\
 y_{21}& y_{22}\\
 \end{pmatrix} \\
  c\begin{pmatrix}
 x_{11}& x_{12}\\
 x_{21}& x_{22}\\
 \end{pmatrix}+d\begin{pmatrix}
 y_{11}& y_{12}\\
 y_{21}& y_{22}\\
 \end{pmatrix} 
    \end{pmatrix}
$$
Then the projection of the trivial connection on $\mathbb{R}^4$ to this vector bundle is the solution obtained from the action of $g$ on the basic split anti-instanton. From this construction it is clear that if $g\in SO(4)$ then the solution is gauge equivalent  to the basic anti-split instanton.  In other words the basic split anti-instanton has a big symmetry group. This is a property which is very restrictive for a solution.  We define anti-instantons of topological charge $-1$ as follows 
 \begin{definition}
 A split anti-instanton of topological charge $-1$ is an $O(2)$-ASDYM solution of topological charge $-1$ on  $ \mathbb{H}_{\mathbb{R}}\mathbb{P}^1$ which is invariant (up to gauge transformation) under some maximal compact subgroup of $SL(4,\mathbb{R})$.
 \end{definition}
The main result of this section is that the moduli space of split anti-instantons of topological charge $-1$  is finite dimensional and it is isomorphic to $SL(4,\mathbb{R})/SO(4)$.  First we show the following
 \begin{proposition}\label{IS}
The subgroup of $SL(4,\mathbb{R})$ which leaves the basic split anti-instanton invariant (up to gauge transformation) is $SO(4)$.
\end{proposition}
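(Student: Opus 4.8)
The plan is to reduce the statement to an intrinsic invariant of the solution living on $\mathbb{R}\mathbb{P}^3$, and then to a short computation with a projective action. The easy inclusion is that $SO(4)$ lies in the stabilizer. Indeed, $SO(4)$ preserves the standard inner product on $\mathbb{R}^4$, so it commutes with the orthogonal projection onto the fibres of the tautological bundle $\mathcal{P}$; hence the projected connection $\nabla$ of Proposition \ref{UP} is $SO(4)$-equivariant and, for each $g\in SO(4)$, the linear isometry $g\colon\mathcal{P}_{[X:Y]}\to\mathcal{P}_{g[X:Y]}$ gives a connection-preserving $O(2)$-bundle isomorphism $(\mathcal{P},\nabla)\cong g^*(\mathcal{P},\nabla)$. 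So $SO(4)$ fixes the basic split anti-instanton up to gauge, and all the content is in the reverse inclusion.

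For the converse I would pass to curvatures. Recall that the curvature map $\phi\colon\mathcal{M}^+_V\to\Lambda^2(g_V)$ is injective, so two $O(2)$-solutions on $V$ are equivalent under the connected gauge group $G^+_V$ exactly when they have the same curvature, while a gauge transformation in $G^-_V$ acts on the real line bundle $g_V$ by $-1$ and therefore negates the curvature. Composing $\phi$ with the inverse of the $SL(4,\mathbb{R})$-equivariant bijection $R$ of Theorem \ref{GS}, every solution on $V$ is encoded, up to gauge, by a $3$-form on $\mathbb{R}\mathbb{P}^3$, and the conformal pullback action of $SL(4,\mathbb{R})$ on solutions matches the projective pullback action on $3$-forms. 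Writing $\omega_0$ for the $3$-form attached to the basic split anti-instanton, this identifies the stabilizer to be computed with $\{g\in SL(4,\mathbb{R}) : g^*\omega_0=\pm\omega_0\}$.

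Next I would identify $\omega_0$. Since the basic split anti-instanton is $SO(4)$-invariant by the first paragraph and $R$ is equivariant, $\omega_0$ is an $SO(4)$-invariant $3$-form on $\mathbb{R}\mathbb{P}^3$; as $SO(4)$ acts transitively there with point-stabilizer $SO(3)$ acting trivially on the top exterior power of the tangent space, $\omega_0$ must be a multiple of the round volume form $\mathrm{vol}$, and it is nonzero because the topological charge is $-1$. It then remains to show that $g^*\mathrm{vol}=\pm\mathrm{vol}$ forces $g\in SO(4)$. Using the Cartan decomposition $g=k\exp(S)$ with $k\in SO(4)$ and $S$ a symmetric traceless $4\times 4$ matrix, and since $SO(4)$ already stabilizes $\mathrm{vol}$, this reduces to showing that a positive-definite symmetric $p=\exp(S)$ with $p^*\mathrm{vol}=\pm\mathrm{vol}$ is the identity. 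Diagonalizing $p$ with eigenvalues $\lambda_1,\dots,\lambda_4>0$ (product $1$), each eigendirection $[e_i]$ is a fixed point of the projective action, and in the affine chart there the differential has determinant $\det(Dp)_{[e_i]}=\prod_{j\neq i}(\lambda_j/\lambda_i)=\lambda_i^{-4}$; evaluating $p^*\mathrm{vol}=\pm\mathrm{vol}$ at the nonvanishing form $\mathrm{vol}_{[e_i]}$ gives $\lambda_i^{-4}=1$, hence $\lambda_i=1$ for every $i$ and $p=I$. Therefore $g=k\in SO(4)$.

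The step I expect to be the main obstacle is the bookkeeping in the second paragraph: making the correspondence ``solutions $\leftrightarrow$ $3$-forms on $\mathbb{R}\mathbb{P}^3$'' genuinely equivariant (so that conformal pullback really matches projective pullback under $R$) and keeping careful track of the sign introduced by the disconnected piece $G^-_V$ of the gauge group. Once this dictionary is in place, the identification $\omega_0=\mathrm{vol}$ and the fixed-point determinant computation are short. For completeness I would note a more hands-on route that avoids Theorem \ref{GS}: after the same Cartan reduction one observes that the solution obtained from $g$ is the projection connection built from the pulled-back inner product $p^*\langle\,,\rangle=p^2$, and one checks directly that this is gauge equivalent to the one built from $\langle\,,\rangle$ precisely when $p^2$ is conformal to $\langle\,,\rangle$, i.e.\ $p=I$; there the difficulty shifts to recovering the metric up to scale from the connection, which is exactly what the $\mathbb{R}\mathbb{P}^3$ picture packages cleanly.
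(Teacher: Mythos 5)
Your argument is correct, but it takes a genuinely different route from the paper's. The paper proves the hard inclusion by direct computation: using the $SO(4)$-action it reduces $g$ to the block form $\begin{pmatrix} a& b\\ 0& d\end{pmatrix}$ with $a,b$ upper triangular, writes down the transformed potential $A_g=[d^td+(aX+b)^t(aX+b)]^{-1}(aX+b)^tadX$ and its curvature $F_g$, and extracts from the gauge-invariant density $\det(F_g)$ two numerical invariants --- a ``center'' $X=-a^{-1}b$ where the field strength peaks and a ``scale'' $2\det(a)^2\det(d)^{-2}$; matching these with those of the basic split anti-instanton forces $b=0$ and $\det a=\det d$, and the remaining identity $\det(1+aX(aX)^t)=\det(1+Xd(Xd)^t)$ for all $X$ gives $a=d=\pm1$ by comparing coefficients. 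You instead push the whole question through the Guillemin--Sternberg transform of Theorem \ref{GS}: the stabilizer becomes the projective stabilizer in $SL(4,\mathbb{R})$ of the $SO(4)$-invariant volume form on $\mathbb{R}\mathbb{P}^3$, which your Cartan-decomposition and fixed-point determinant computation ($\det(Dp)_{[e_i]}=\lambda_i^{-4}$) correctly identifies as $SO(4)$. Amusingly, the paper deploys exactly your $\mathbb{R}\mathbb{P}^3$ picture one step later, in the proof that the moduli space is $SL(4,\mathbb{R})/SO(4)$, where it asserts that the only $3$-form with $g^*\omega=\pm\omega$ for all $g\in SO(4)$ is a multiple of the invariant volume form; you have essentially front-loaded that argument into Proposition \ref{IS} and then finished it off with the eigenline computation that the paper leaves implicit. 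What the paper's computation buys is independence from Theorem \ref{GS} together with the explicit center/scale invariants (which it visibly wants for their own sake); what yours buys is a conceptual one-line reason for the answer and no curvature bookkeeping, at the price of making the equivariance dictionary completely precise --- in particular the injectivity of the curvature map on $\mathcal{M}^+_V$ rests on $H^1(\mathbb{H}_{\mathbb{R}}\mathbb{P}^1,\widetilde{\mathbb{R}})=0$ and the sign of the $G^-_V$-action, both of which the paper's Section 3 does supply, so the step you flag as the main obstacle is already covered.
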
 
\begin{proof}
Suppose that $g$ fixes the basic split anti-instanton. Using the action of $SO(4)$ we can assume that 
$c=0$ and $a, b$ are upper-triangular. Computations similar to the basic split anti-instanton show that the connection potential of this solution  (in suitable local coordinates and  local frame, see \ref{UP}) is given by 
$$A_g=[d^td+(aX+b)^t(aX+b)]^{-1}(aX+b)^tadX$$
In order to compute the curvature we need the following lemma
\begin{lemma}
For any two elements $a,b\in \mathbb{H}_{\mathbb{R}}$ such that $b$ is invertible we have
$$1-a(a^ta+b^tb)^{-1}a^t=(1+a(b^tb)^{-1}a^t)^{-1}$$
\end{lemma}
\begin{proof}
We simply have
$$(1-a(a^ta+b^tb)^{-1}a^t)(1+a(b^tb)^{-1}a^t)=1+a(b^tb)^{-1}a^t-a(a^ta+b^tb)^{-1}a^t(1+a(b^tb)^{-1}a^t)
$$
$$=1+a(b^tb)^{-1}a^t-a(a^ta+b^tb)^{-1}a^t-a(a^ta+b^tb)^{-1}a^ta(b^tb)^{-1}a^t=$$
$$
1+a(b^tb)^{-1}a^t-a(a^ta+b^tb)^{-1}a^t-a(a^ta+b^tb)^{-1}(a^ta+b^tb)(b^tb)^{-1}a^t+a(a^ta+b^tb)^{-1}a^t
$$
which simplifies to 1. Hence we have the formula.

\end{proof}
From this lemma and computations similar to the ones for the curvature of the basic split anti-instanton, we find the following formula for the curvature $F_g$ of $A_g$
$$F_g= [d^td+(aX+b)^t(aX+b)]^{-1}dX^t\wedge a^t[1+(aX+b)(d^td)^{-1}(aX+b)^t]^{-1}adX$$
 This formula implies that
$$\det(F_g)=\frac{2 \det(a)^2dV}{\det[d^td+(aX+b)^t(aX+b)]\det[1+(aX+b)(d^td)^{-1}(aX+b)^t]}$$
Therefore the field strength $\sqrt{(\det(F_g),\det(F_g))}$ (the induced metric on the the vector bundle of 4-forms is always positive) attains its maximum at the point $X=-a^{-1}b$ (the center) and its value is 
$2\det(a)^2\det(d)^{-2}$ (the scale). For example the center of the basic split anti-instanton is the origin and its scale is 2. \\
Since $\det(F_g)$ is gauge invariant (so are the center and the scale) and $g$ fixes the basic split anti-instanton we must have $b=0$ and can assume that $\det(a)=\det(d)=1$.  With this assumption $\det(F_g)$ becomes
$$\det(F_g)=\frac{2 dV}{\det[d^td+X^ta^taX]\det[1+aX(d^td)^{-1}X^ta^t]}$$
$$=\frac{2 dV}{\det[1+aXd^{-1}(aXd^{-1})^t]^2}$$
So if $g$ fixes the basic split anti-instanton, then we have
$$\det[1+aXd^{-1}(aXd^{-1})^t]=\det[1+XX^t]$$
for any $X\in \mathbb{H}_{\mathbb{R}}$. This is equivalent to the following
$$\det[1+aX(aX)^t]=\det[1+Xd(Xd)^t] (*)$$
for any $X\in \mathbb{H}_{\mathbb{R}}$.  Set
$$a=\begin{pmatrix}
a_{11} & a_{12}\\
0 & a_{11}^{-1}
\end{pmatrix},
d=\begin{pmatrix}
d_{11} & d_{12}\\
0 & d_{11}^{-1}
\end{pmatrix} 
$$
Then (*) implies that
$$(a_{11}x_{11}+a_{12}x_{21})^2+(a_{11}x_{12}+a_{12}x_{22})^2+(a_{11}^{-1}x_{21})^2
+(a_{11}^{-1}x_{21})^2
$$
$$=(d_{11}x_{11})^2+(d_{12}x_{11}+d_{11}^{-1}x_{12})^2+(d_{11}x_{21})^2
+(d_{12}x_{21}+d_{11}^{-1}x_{22})^2
$$
 Comparing the coefficients implies that $a=d=\pm1$. Hence $g\in SO(4)$. 
\end{proof} 
Using this proposition we have
\begin{theorem}
 The moduli space of split anti-instantons of topological charge $-1$  is isomorphic to $SL(4,\mathbb{R})/SO(4)$.
\end{theorem}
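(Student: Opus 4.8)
The plan is to realize the moduli space as a single orbit of the basic split anti-instanton under the proper conformal group $SL(4,\mathbb{R})$ and then apply the orbit--stabilizer principle, the substantive input being a uniqueness statement. First I would dispose of the two easy halves. Since $SL(4,\mathbb{R})/\{\pm1\}$ is the proper conformal group of $\mathbb{H}_{\mathbb{R}}\mathbb{P}^1$ and the ASDYM equations are conformally invariant, $SL(4,\mathbb{R})$ acts on $O(2)$-ASDYM solutions by pullback (modulo gauge), preserving the topological charge. By Proposition \ref{IS} the stabilizer of the basic split anti-instanton $(\mathcal{P},\nabla)$ is exactly $SO(4)$; hence the orbit map $g\,SO(4)\mapsto g\cdot(\mathcal{P},\nabla)$ is a well-defined injection of $SL(4,\mathbb{R})/SO(4)$ into the moduli space. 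Moreover every point of this orbit is a genuine split anti-instanton: because $(\mathcal{P},\nabla)$ is $SO(4)$-invariant, $g\cdot(\mathcal{P},\nabla)$ is invariant under $g\,SO(4)\,g^{-1}$, which is again a maximal compact subgroup since all maximal compact subgroups of $SL(4,\mathbb{R})$ are conjugate.

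It remains to prove surjectivity, namely that every split anti-instanton lies in this orbit, and here I would use conjugacy of maximal compacts to reduce matters to a uniqueness assertion. If $\nabla'$ is an $O(2)$-ASDYM solution of charge $-1$ invariant (up to gauge) under a maximal compact subgroup $K=g\,SO(4)\,g^{-1}$, then a direct check shows that $g^{-1}\cdot\nabla'$ is invariant (up to gauge) under $SO(4)$. Thus it suffices to establish that \emph{the basic split anti-instanton is the unique $SO(4)$-invariant $O(2)$-ASDYM solution of charge $-1$, up to gauge}: granting this, $g^{-1}\cdot\nabla'$ is gauge-equivalent to the basic solution, so $\nabla'$ is conformally equivalent to it and lies in the orbit. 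Combined with injectivity this yields the bijection $\mathcal{M}\cong SL(4,\mathbb{R})/SO(4)$.

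For the uniqueness statement I would pass to the explicit description of the moduli space. Since the charge determines the bundle $V$ up to T-isomorphism, all such solutions live on a fixed $V$, and Theorems \ref{GS} and \ref{MS} identify $\mathcal{M}^+_V$ with $\Lambda^3_{\mathbb{R}\mathbb{P}^3,\,\text{exact}}$, with the basic solution corresponding to $0$. Because the transformation $R$ of Theorem \ref{GS} is $SL(4,\mathbb{R})$-equivariant, this identification intertwines the conformal action with the pullback action of $SL(4,\mathbb{R})$ on $3$-forms; and since the basic solution $(=0)$ is $SO(4)$-fixed by Proposition \ref{IS}, the $SO(4)$-action on $\Lambda^3_{\mathbb{R}\mathbb{P}^3,\,\text{exact}}$ is the \emph{linear} pullback action $\omega\mapsto g^{*}\omega$. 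I would then compute its fixed set: $SO(4)$ acts transitively on $\mathbb{R}\mathbb{P}^3$ by orientation-preserving isometries with irreducible isotropy representation, so the $SO(4)$-invariant $3$-forms form the one-dimensional space spanned by the volume form; as the volume form is not exact, the only invariant exact $3$-form is $0$. Hence $0$ is the unique $SO(4)$-fixed point, giving the desired uniqueness. I expect this last step---threading the equivariant identification of Theorem \ref{MS} and carrying out the fixed-point computation on $\mathbb{R}\mathbb{P}^3$---to be the main obstacle, together with the bookkeeping required to pass between the full gauge group $G_V$ (implicit in ``invariant up to gauge'') and its identity component $G^+_V$ used to define $\mathcal{M}^+_V$.
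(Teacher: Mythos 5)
Your proposal is correct and follows essentially the same route as the paper: reduce to $SO(4)$-invariance by conjugacy of maximal compact subgroups, use the Guillemin--Sternberg transform of Theorems \ref{GS} and \ref{MS} to turn the uniqueness question into a fixed-point computation for $3$-forms on $\mathbb{R}\mathbb{P}^3$, and then invoke Proposition \ref{IS} to identify the stabilizer. The only cosmetic difference is that you phrase the fixed-point step as ``the unique $SO(4)$-invariant exact $3$-form is $0$'' after shifting the base point to the basic solution, whereas the paper phrases it as ``the unique ray with $h^*\omega=\pm\omega$ is the (non-exact) volume form''; these are the same computation, and your handling of the $G_V$ versus $G^+_V$ sign issue via connectedness of $SO(4)$ is the correct resolution of what the paper leaves implicit.
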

 \begin{proof}
 Suppose that $\nabla$ is an $O(2)$-ASDYM solution of topological charge $-1$ which is invariant under a maximal compact subgroup of 
 $SL(4,\mathbb{R})$. Since all maximal compact subgroups of $SL(4,\mathbb{R})$ are conjugate, we may assume that $\nabla$ is invariant under the action of $SO(4)$.  Thanks to  theorem \cite{GS} and the fact that the action of the gauge transformation on the curvature is $\pm$, we deduce that $\omega\in\Lambda^3_{\mathbb{R}\mathbb{P}^3}$   goes to $\pm\omega$ under the action of $SO(4)$ where  $C_\nabla$, the curvature of $\nabla$, is equal to $R(\omega)$, see theorem \cite{GS}. It is easy to see that there is only one nonzero $\omega_0\in\Lambda^3_{\mathbb{R}\mathbb{P}^3}$ (up to multiplication by scalars) with this property and it is in fact invariant under the action of $SO(4)$. This implies that $\nabla$ has to be the basic split anti-instanton up to gauge transformation.  Now proposition \ref{IS} finishes the proof of the theorem.

 \end{proof}

\end{subsection}

                       \end{section}

                                               \begin{section}{t'Hooft ansatz}
t'Hooft ansatz gives a way to produce ASDYM solutions in the Euclidean case by starting from solutions of the Laplacian on four variables, see \cite{MW}.  Even though it does not produce the whole set of solutions in general, it does produce all instantons of topological charge $-1$. In this section we give the analog of t'Hooft ansatz in the split case and show that every split anti-instanton can be obtained via split t'Hooft ansatz.  

                \begin{subsection}{Grassmannian of 2-planes in $\mathbb{R}^4$}     
As we saw, $\mathbb{H}_{\mathbb{R}}\mathbb{P}^1$ is isomorphic to   the Grassmannian of 2-planes in $\mathbb{R}^4$. Since we want to do calculations in local coordinates in this section, we work with  the Grassmannian of 2-planes in $\mathbb{R}^4$.   We denote the Grassmannian of 2-planes in $\mathbb{R}^4$ by $Gr$. We denote elements of $Gr$  by  $$p=  \left | \begin{array}{cccc}
a_{11}&b_{11} \\
a_{21} & b_{21} \\
a_{31} & b_{31}\\
a_{41} & b_{41}
 \end{array} \right |$$
which means 
$$p=\mathbb{R} \left ( \begin{array}{cccc}
a_{11} \\
a_{21} \\
a_{31} \\
a_{41} 
 \end{array} \right )+\mathbb{R}\left ( \begin{array}{cccc}
b_{11} \\
b_{21} \\
b_{31}\\
 b_{41}
 \end{array}\right ) $$
Therefore, 
$$\left | \begin{array}{cccc}
a_{11}&b_{11} \\
a_{21} & b_{21} \\
a_{31} & b_{31}\\
a_{41} & b_{41}
 \end{array} \right |=\left | \begin{array}{cccc}
c_{11}&d_{11} \\
c_{21} & d_{21} \\
c_{31} & d_{31}\\
c_{41} & d_{41}
 \end{array} \right |$$
if and only if
$$ \left ( \begin{array}{cccc}
a_{11}&b_{11} \\
a_{21} & b_{21} \\
a_{31} & b_{31}\\
a_{41} & b_{41}
 \end{array} \right )=\left ( \begin{array}{cccc}
c_{11}&d_{11} \\
c_{21} & d_{21} \\
c_{31} & d_{31}\\
c_{41} & d_{41}
 \end{array} \right ) g$$
for some $g\in GL(2,\mathbb{R})$.
We introduce the following local coordinates on $Gr$. For any $g\in SL(4,\mathbb{R})$, we define  $\Psi_g:M_{2}(\mathbb{R})\to Gr$ by 

$$ \Psi_g(A)=\left | \begin{array}{cc}
g_{11}A+g_{12}\\ 
g_{21}A+g_{22} \\
 \end{array} \right | $$
 where  $g=\left ( \begin{array}{cc}
g_{11} & g_{12} \\
g_{21} & g_{22} \end{array} \right )$ with $g_{ij}\in M_{2}(\mathbb{R})$.
This defines local coordinates $(U_g,\Psi_g)$ on $Gr$ where $U_g=\Psi_g(M_{2}(\mathbb{R}))$.  The change of local coordinates from $U_g$ to $U_h$ is given by 
$$\Psi_{g,h}(A)=(k_{11}A+k_{12})(k_{21}A+k_{22})^{-1}$$ 
where $h^{-1}g=\left ( \begin{array}{cc}
k_{11} & k_{12} \\
k_{21} & k_{22} \end{array} \right )$ with $k_{ij}\in M_{2}(\mathbb{R})$. Note that the domain of  $\Psi_{g,h}$ is $U_{g,h}= \{ A | det(Ak_{12}+k_{22})\neq 0 \}$. \\
First we summarize some of   the properties of these changes of coordinates.\\
 (1)Tangent space: We identify the tangent space to $M_{2}(\mathbb{R})$ at any point with  $M_{2}(\mathbb{R})$ via $\sum_{ij}a_{ij}\frac{\partial}{\partial x_{ij}}\to
 \left( \begin{array}{cc}
a_{11} & a_{12} \\
a_{21} & a_{22}  \end{array} \right)$
Then it is easy to see that
\begin{lemma}
 $T_{X}\Psi_{g,h}:M_{2}(\mathbb{R})\to M_{2}(\mathbb{R})$ is given by
$$T_{X}\Psi_{g,h}(A)= (k_{11}-(k_{11}X+k_{12})(k_{21}X+k_{22})^{-1}k_{21})A(k_{21}X+k_{22})^{-1}$$
\end{lemma}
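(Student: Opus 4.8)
The plan is to compute the directional derivative of $\Psi_{g,h}$ directly from its formula. Under the identification of $T_X M_2(\mathbb{R})$ with $M_2(\mathbb{R})$ fixed just above the statement, $T_X\Psi_{g,h}(A)$ is nothing but $\frac{d}{dt}\big|_{t=0}\Psi_{g,h}(X+tA)$, so the entire task reduces to differentiating the matrix-valued map $X\mapsto (k_{11}X+k_{12})(k_{21}X+k_{22})^{-1}$ in the direction $A$, at a point $X$ in the domain $U_{g,h}$ where $k_{21}X+k_{22}$ is invertible.

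First I would introduce the abbreviations $P(X):=k_{11}X+k_{12}$ and $Q(X):=k_{21}X+k_{22}$, so that $\Psi_{g,h}(X)=P(X)Q(X)^{-1}$. Both $P$ and $Q$ are affine in $X$, hence their derivatives are immediate: $DP(X)[A]=k_{11}A$ and $DQ(X)[A]=k_{21}A$. Next I would apply the product rule together with the standard noncommutative formula for the derivative of a matrix inverse, namely
$$D(Q^{-1})(X)[A]=-Q(X)^{-1}\,\big(DQ(X)[A]\big)\,Q(X)^{-1}=-Q(X)^{-1}k_{21}A\,Q(X)^{-1},$$
which follows by differentiating $Q(X)Q(X)^{-1}=1$. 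Combining these gives
$$T_X\Psi_{g,h}(A)=k_{11}A\,Q(X)^{-1}-P(X)Q(X)^{-1}k_{21}A\,Q(X)^{-1},$$
where I must preserve the order of all factors throughout, since the entries $k_{ij}$ and the direction $A$ do not commute.

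The final step is purely algebraic: in both resulting terms the direction $A$ appears sandwiched between a left factor and the common right factor $Q(X)^{-1}$, so I can factor $A\,Q(X)^{-1}$ out on the right and collect the two left factors, obtaining
$$T_X\Psi_{g,h}(A)=\big(k_{11}-P(X)Q(X)^{-1}k_{21}\big)A\,Q(X)^{-1},$$
which is exactly the claimed formula after substituting back $P(X)=k_{11}X+k_{12}$ and $Q(X)=k_{21}X+k_{22}$.

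There is no deep obstacle here; the computation is routine. The only points demanding genuine care are using the correct \emph{noncommutative} form of the inverse-derivative rule and, crucially, verifying that $A$ occupies the identical position (immediately to the left of $Q(X)^{-1}$) in both terms, so that the final factorization is legitimate. A scalar-style manipulation that slides factors past $A$ would misplace it and yield an incorrect answer, so this bookkeeping of the order of factors is where the care lies.
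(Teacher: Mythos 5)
Your computation is correct: the product rule combined with the noncommutative inverse-derivative formula $D(Q^{-1})[A]=-Q^{-1}(k_{21}A)Q^{-1}$ yields exactly the stated expression, and your care with the order of factors is exactly what is needed. The paper omits the proof entirely (the lemma is prefaced only by ``it is easy to see''), and your argument is precisely the routine verification being implicitly invoked.
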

In particular this shows that  $Gr$  is orientable. We take the orientation on $Gr$ which in local coordinates $U_g$ is given by 
$$dx_{11}\wedge dx_{21}\wedge dx_{12}\wedge dx_{22}$$
(2) For a function $f:M_{2}(\mathbb{R})\to \mathbb{R}$ we define
 $$\frac{\partial f}{\partial X}= \left( \begin{array}{cc}
\frac{\partial f}{\partial x_{11}} & \frac{\partial f}{\partial x_{21}}\\
\frac{ \partial f}{\partial x_{12}} &  \frac{\partial f}{\partial x_{22}} \end{array} \right)$$
Then one can see that 
\begin{lemma}
For any  $f:M_{2}(\mathbb{R})\to \mathbb{R}$  we have,
$$\frac{\partial (f\circ \Psi_{g,h})}{\partial X}= (k_{21}X+k_{22})^{-1}
\frac{\partial f }{\partial X}\circ \Psi_{g,h}  (k_{11}-(k_{11}X+k_{12})(k_{21}X+k_{22})^{-1}k_{21})
$$
\end{lemma}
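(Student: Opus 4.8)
The plan is to reduce everything to the pairing between the matrix derivative and a tangent vector via the trace, and then feed in the formula for $T_X\Psi_{g,h}$ established in the preceding lemma. The key observation, which explains the particular (transposed) indexing in the definition of $\frac{\partial f}{\partial X}$, is that for any $f:M_2(\mathbb{R})\to\mathbb{R}$ and any direction $A=\sum_{ij}a_{ij}\frac{\partial}{\partial x_{ij}}$, the directional derivative can be written as a trace,
$$Df(X)[A]=\sum_{i,j}\frac{\partial f}{\partial x_{ij}}a_{ij}=\mathrm{tr}\!\left(\frac{\partial f}{\partial X}\,A\right).$$
First I would verify this identity directly from the definitions: writing $M=\frac{\partial f}{\partial X}$ so that $M_{ij}=\frac{\partial f}{\partial x_{ji}}$, one has $\mathrm{tr}(MA)=\sum_{i,j}M_{ij}A_{ji}=\sum_{i,j}\frac{\partial f}{\partial x_{ji}}a_{ji}$, which after relabeling indices is exactly $Df(X)[A]$.

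Second, I would apply the chain rule to $F:=f\circ\Psi_{g,h}$. For any $A\in M_2(\mathbb{R})$ one has $DF(X)[A]=Df(\Psi_{g,h}(X))\big[T_X\Psi_{g,h}(A)\big]$. Here I would substitute the tangent-map formula from the previous lemma, namely $T_X\Psi_{g,h}(A)=P\,A\,Q$ with $P=k_{11}-(k_{11}X+k_{12})(k_{21}X+k_{22})^{-1}k_{21}$ and $Q=(k_{21}X+k_{22})^{-1}$. Abbreviating $M=\big(\frac{\partial f}{\partial X}\circ\Psi_{g,h}\big)(X)$ and using the trace identity from the first step,
$$DF(X)[A]=\mathrm{tr}(M\,P\,A\,Q)=\mathrm{tr}\big((Q\,M\,P)\,A\big),$$
where the second equality is cyclic invariance of the trace.

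Finally, I would apply the trace identity once more, this time to $F$ itself, to obtain $DF(X)[A]=\mathrm{tr}\big(\frac{\partial F}{\partial X}\,A\big)$. Comparing the two expressions gives $\mathrm{tr}\big(\frac{\partial F}{\partial X}\,A\big)=\mathrm{tr}\big((Q\,M\,P)\,A\big)$ for every $A\in M_2(\mathbb{R})$; since the bilinear pairing $(B,A)\mapsto\mathrm{tr}(BA)$ is non-degenerate on $M_2(\mathbb{R})$, this forces $\frac{\partial F}{\partial X}=Q\,M\,P$, which upon unwinding the abbreviations is precisely the claimed identity. I do not expect a serious obstacle: the only delicate point is bookkeeping the order of the matrix factors, which is dictated by the transposed indexing convention. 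Getting that convention right in the first step is exactly what makes the cyclic rearrangement in the second step place $Q$ on the left and $P$ on the right, matching the statement.
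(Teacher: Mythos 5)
Your proof is correct. The paper states this lemma without proof (prefacing it only with ``one can see that''), and your argument --- expressing the directional derivative as $\mathrm{tr}\bigl(\frac{\partial f}{\partial X}A\bigr)$ using the transposed indexing convention, applying the chain rule together with the tangent-map formula $T_X\Psi_{g,h}(A)=PAQ$ from the preceding lemma, and then using cyclicity and non-degeneracy of the trace pairing --- is exactly the natural verification the authors intend, with the matrix factors landing in the correct order.
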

(3) For a function $f:M_{2}(\mathbb{R})\to M_{2}(\mathbb{R})$, we define 
$$df:=\sum_{ij}\frac{\partial f}{\partial x_{ij}}dx_{ij}$$
as an $M_{2}(\mathbb{R})$-valued 1-form.  For example
$$dX= \left( \begin{array}{cc}
dx_{11} & dx_{12} \\
dx_{21} &dx_{22}  \end{array} \right)$$               
It is easy to see that  
\begin{lemma}
$$ \Psi_{g,h}^{*}(dX)=(k_{11}-(k_{11}X+k_{12})(k_{21}X+k_{22})^{-1}k_{21})dX (k_{21}X+k_{22})^{-1}.$$
\end{lemma}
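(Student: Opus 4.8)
The final statement to prove is the transformation law for the pullback of the matrix-valued $1$-form $dX$ under the change-of-coordinates map $\Psi_{g,h}$, namely
$$\Psi_{g,h}^{*}(dX)=(k_{11}-(k_{11}X+k_{12})(k_{21}X+k_{22})^{-1}k_{21})\,dX\,(k_{21}X+k_{22})^{-1}.$$
This is a purely computational lemma, and the natural plan is to reduce it to the already-established tangent-map formula. The key observation is that for a matrix-valued $1$-form built from the coordinate differentials, pulling back by $\Psi_{g,h}$ amounts to composing with the differential $T_X\Psi_{g,h}$. I would first make this reduction precise and then read off the answer from the preceding lemma.

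\textbf{The reduction to the differential.}
First I would unwind the definition $dX=\sum_{ij}\frac{\partial X}{\partial x_{ij}}\,dx_{ij}$, where $dX$ is simply the identity-valued matrix of coordinate differentials. The pullback $\Psi_{g,h}^{*}(dX)$ acts on a tangent vector $A\in M_2(\mathbb{R})$ (using the identification of $T_XM_2(\mathbb{R})$ with $M_2(\mathbb{R})$ fixed in property (1)) by pushing $A$ forward through $T_X\Psi_{g,h}$ and then evaluating $dX$ on the image. Since $dX$ evaluated on a tangent vector $B$ returns $B$ itself, we get the clean identity $\bigl(\Psi_{g,h}^{*}(dX)\bigr)(A)=T_X\Psi_{g,h}(A)$.

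\textbf{Applying the tangent-map lemma.}
Now I would invoke the earlier tangent-space lemma, which gives
$$T_{X}\Psi_{g,h}(A)=\bigl(k_{11}-(k_{11}X+k_{12})(k_{21}X+k_{22})^{-1}k_{21}\bigr)\,A\,(k_{21}X+k_{22})^{-1}.$$
Combining this with the reduction above yields precisely the claimed formula, once one re-expresses the linear action in $A$ as left- and right-multiplication on the $1$-form $dX$; the left factor and the right factor $(k_{21}X+k_{22})^{-1}$ are exactly the matrices sandwiching $A$ in the tangent-map formula.

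\textbf{The main subtlety.}
There is essentially no hard analytic step here; the content is entirely bookkeeping. The one point requiring care is the consistency of the identifications: the tangent-map lemma identifies a tangent vector with a matrix via $\sum_{ij}a_{ij}\frac{\partial}{\partial x_{ij}}\mapsto (a_{ij})$, and the matrix $1$-form $dX$ must be interpreted with the same index convention (note the transpose appearing in property (2)'s definition of $\partial f/\partial X$ versus the layout of $dX$ in property (3)). The only thing that could go wrong is a transpose mismatch between the ordering of entries in $dX$ and the ordering used to identify tangent vectors with matrices. I would verify that the two conventions are compatible so that $\bigl(\Psi_{g,h}^{*}(dX)\bigr)$ reproduces the matrix product structure without an extraneous transpose, which is the one place where a careless reading of the definitions could introduce an error.
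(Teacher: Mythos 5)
Your proof is correct: the paper states this lemma without proof (``It is easy to see that''), and your reduction --- observing that $\Psi_{g,h}^{*}(dX)=d(\Psi_{g,h})$ acts on a tangent vector $A$ exactly as $T_{X}\Psi_{g,h}(A)$, so the formula is just a restatement of the preceding tangent-map lemma --- is the evident intended argument. Your remark about checking that the index conventions for $dX$ and for the identification $\sum_{ij}a_{ij}\frac{\partial}{\partial x_{ij}}\mapsto(a_{ij})$ agree (so that no spurious transpose appears) is exactly the one point worth verifying, and it does check out.
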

(4) Conformal structure on $Gr$: Consider the following metric on $M_{2}(\mathbb{R})$,
$$ds^2=2(dx_{11}dx_{22}-dx_{12}dx_{21})=2 det(dX).$$
\begin{proposition}
The metric changes under the change of coordinates as
$$\Psi_{g,h}^{*}(ds^2)=\frac{1}{det(k_{21}X+k_{22})^2}ds^2$$
\end{proposition}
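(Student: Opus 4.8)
The plan is to pull the metric back entry-by-entry using the transformation law for $dX$ established above, namely $\Psi_{g,h}^{*}(dX)=P\,dX\,Q$ with $P=k_{11}-(k_{11}X+k_{12})(k_{21}X+k_{22})^{-1}k_{21}$ and $Q=(k_{21}X+k_{22})^{-1}$, and then to reduce the whole statement to a single $4\times 4$ determinant that vanishes thanks to $h^{-1}g\in SL(4,\mathbb{R})$.

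First I would record the elementary multiplicativity fact: for any $2\times 2$ matrix of $1$-forms $\omega$ and any $2\times 2$ matrices $P,Q$ with function entries,
$$\det(P\,\omega\,Q)=\det(P)\det(Q)\det(\omega),\qquad \det(\omega):=\omega_{11}\omega_{22}-\omega_{12}\omega_{21}.$$
This follows from a direct expansion: upon right multiplication by $Q$ the diagonal products $\omega_{11}\omega_{22}$ and $\omega_{12}\omega_{21}$ acquire the factor $\det(Q)$, while the cross terms (products of two entries in a common column of $\omega$) appear with an antisymmetric scalar coefficient of the form $Q_{ab}Q_{ab'}-Q_{ab'}Q_{ab}=0$ and cancel; left multiplication by $P$ is identical. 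The key point is that this cancellation does not use anticommutativity of the factors, so the identity is valid for the \emph{symmetric} products occurring in the quadratic form $ds^{2}=2\det(dX)$ exactly as for wedge products. Applying this to $\Psi_{g,h}^{*}(dX)=P\,dX\,Q$ gives at once
$$\Psi_{g,h}^{*}(ds^{2})=2\det(P\,dX\,Q)=\det(P)\det(Q)\,ds^{2}.$$

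It then remains to identify $\det(P)\det(Q)$. Since $\det(Q)=\det(k_{21}X+k_{22})^{-1}$ is immediate, the entire statement reduces to the claim $\det(P)=\det(k_{21}X+k_{22})^{-1}$, which I expect to be the crux of the argument. Here I would recognize $P$ as a Schur complement. Writing $M:=k_{21}X+k_{22}$, the matrix $P=k_{11}-(k_{11}X+k_{12})M^{-1}k_{21}$ is precisely the Schur complement of the top-left block $M$ in
$$N:=\begin{pmatrix} k_{21}X+k_{22} & k_{21}\\ k_{11}X+k_{12} & k_{11}\end{pmatrix},\qquad \det N=\det(M)\det(P).$$

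The decisive observation is the factorization
$$N=\begin{pmatrix} k_{21} & k_{22}\\ k_{11} & k_{12}\end{pmatrix}\begin{pmatrix} X & I\\ I & 0\end{pmatrix}.$$
The second factor has determinant $1$: a block-row swap turns it into the unipotent block-triangular matrix $\left(\begin{smallmatrix} I & 0\\ X & I\end{smallmatrix}\right)$, and for $2\times 2$ blocks that swap is the even permutation $(1\,3)(2\,4)$. The first factor is obtained from $k=h^{-1}g$ by the same even block-row swap, so its determinant equals $\det(h^{-1}g)=1$ because $g,h\in SL(4,\mathbb{R})$. Hence $\det N=1$, giving $\det(P)=1/\det(M)$ and therefore $\det(P)\det(Q)=\det(k_{21}X+k_{22})^{-2}$, which is exactly the asserted conformal factor. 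The only genuine obstacle is spotting the Schur-complement/block-factorization structure that converts $\det P$ into the value $\det(h^{-1}g)=1$; everything else is routine expansion.
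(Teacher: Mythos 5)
Your proof is correct and follows the same route as the paper: pull back $ds^2=2\det(dX)$ through the transformation law for $dX$ from the preceding lemma and reduce everything to the identity $\det\bigl(k_{11}-(k_{11}X+k_{12})(k_{21}X+k_{22})^{-1}k_{21}\bigr)=\det(k_{21}X+k_{22})^{-1}$. The only difference is that the paper asserts this determinant identity as a bare ``fact,'' whereas you actually prove it (via the Schur complement and the block factorization, using $\det(h^{-1}g)=1$), and you also check that determinant multiplicativity survives the replacement of wedge products by the symmetric products appearing in $ds^2$ --- both are details the paper omits, not a different argument.
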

\begin{proof}
This follows from lemma 3.3 and the fact that
$$det(k_{21}X+k_{22})^{-1}=det(k_{11}-(k_{11}X+k_{12})(k_{21}X+k_{22})^{-1}k_{21})$$
\end{proof}
This proposition implies that we have a conformal structure on $Gr$ which on $U_g$ is given by the above metric.  Using this conformal structure and the orientation we can consider ASDYM equations on $Gr$ which in local coordinates are just split ASDYM equations. We need to have some information about vector bundles on $Gr$.  First of all we have the universal vector bundle on $Gr$. We denote the universal vector bundle on $Gr$ by $\mathcal{P}$ as before. It is the vector sub-bundle of the trivial bundle $Gr\times \mathbb{R}^4$ whose fiber at $p\in Gr$  is $p$ itself.
On each $U_g$, we have the following trivialization           
$$U_g\times \mathbb{R}^2\to  \mathcal{P}$$
$$ (p,\left( \begin{array}{cc}
x \\
 y \end{array} \right) )\to (p, g   \left( \begin{array}{cc}
A\\  
I
 \end{array}   \right)
 \left( \begin{array}{cc}
x \\
 y \end{array} \right)  
)
$$
 where $\Psi_g(A)=p$. Note that the transition functions of $\mathcal{P}$ are given by $k_{21}X+k_{22}$.\\
We denote $\wedge^{2}\mathcal{P}$ by $\varepsilon[-1]$. Hence the transition functions of  $\varepsilon[-1]$ are given by $det(k_{21}X+k_{22})$. We have a locally constant line bundle on $Gr$ whose transition functions are given by $\frac{det(k_{21}X+k_{22})}{|det(k_{21}X+k_{22})|}$. We denote this line bundle by $\tilde{\varepsilon}$.
 For any $n\in \mathbb{Z}$  and any vector bundle $E$ on $Gr$ we set $\varepsilon[n]=\varepsilon[-1]^{-n}$, $E[n]:=E\otimes \varepsilon[n]$ and $\widetilde{E}=E\otimes\tilde{\varepsilon}$.\\
On each $U_g$ we have the following second order differential operator 
$$\square_{2,2}=\frac{\partial^2}{\partial x_{11}\partial x_{22}}-\frac{\partial^2}{\partial x_{12}\partial x_{21}}$$
 One can check that under the change of local coordinates we have, see \cite{KO},
 $$\square_{2,2} (|det(k_{21}X+k_{22})|^{-1} f\circ \Psi_{g,h})=|det(k_{21}X+k_{22})|^{-3} \square_{2,2} ( f)\circ \Psi_{g,h}$$
 This means that we have a global differential operator from  $\widetilde{\varepsilon[-1]}$ to  $\widetilde{\varepsilon[-3]}$
which in each local coordinates $U_g$ is given by  $\square_{2,2} $. We also denote this global differential operator by $\square_{2,2} $.
\end{subsection}
 \begin{subsection}{ t'Hooft ansatz}                                                    
Using our notation we rewrite the famous t'Hooft ansatz.  For any smooth function $f: \mathbb{H}_{\mathbb{R}}\to \mathbb{R}$, we define  the local t'Hooft ansatz to be
 $$A(f)=-\frac{\frac{\partial f}{\partial X}}{f}dX$$
 which is an $\mathbb{H}_{\mathbb{R}}$-valued 1-form. Then we have the following famous result
 \begin{proposition}(Local t'Hooft ansatz)
The  curvature 
$$F(f)=dA(f)+A(f)\wedge A(f)$$
  of the connection potential $A(f)$ is ASD if  and only if $f$ satisfies
$$\square_{2,2}f:= \frac{\partial^2 f}{\partial x_{11}\partial x_{22}}-\frac{\partial^2 f}{\partial x_{12}\partial x_{21}}=0$$
\end{proposition}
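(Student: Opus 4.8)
The plan is to compute $F(f)=dA(f)+A(f)\wedge A(f)$ directly as a matrix of $2$-forms in the coordinates $x_{ij}$, decompose it into its self-dual and anti-self-dual parts using the bases of $2$-forms recalled earlier (on $\mathbb{H}_{\mathbb{R}}=M_{2}(\mathbb{R})$ the self-dual forms are spanned by $dx_{11}\wedge dx_{21}$, $dx_{12}\wedge dx_{22}$ and $dx_{11}\wedge dx_{22}+dx_{12}\wedge dx_{21}$), and show that the self-dual part is, in every matrix entry, $\tfrac1f\,\square_{2,2}f$ times a fixed $2$-form. Since $A(f)$ is only defined where $f\neq 0$, the factor $1/f$ is nowhere-vanishing, and the equivalence "$F(f)$ anti-self-dual $\iff \square_{2,2}f=0$" then follows at once in both directions.

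First I would abbreviate $M:=\frac{\partial f}{\partial X}$ and $\omega:=M\,dX$, so that $A(f)=-\tfrac1f\omega$; using $d(dX)=0$, a one-line computation gives
$$F(f)=-\tfrac1f\,dM\wedge dX+\tfrac{1}{f^{2}}\bigl(df\wedge\omega+\omega\wedge\omega\bigr).$$
The crucial point is that the quadratic piece $\tfrac{1}{f^{2}}(df\wedge\omega+\omega\wedge\omega)$ is already anti-self-dual. Writing $\omega=\left(\begin{smallmatrix}a&b\\ c&d\end{smallmatrix}\right)$ with scalar $1$-forms $a,b,c,d$ and using $df=\operatorname{tr}\omega=a+d$, one finds
$$df\wedge\omega+\omega\wedge\omega=\begin{pmatrix} d\wedge a+b\wedge c & 2\,a\wedge b\\ 2\,d\wedge c & a\wedge d+c\wedge b\end{pmatrix}.$$
Substituting $a=f_{11}dx_{11}+f_{21}dx_{21}$ and the analogous expressions for $b,c,d$ (with $f_{ij}:=\partial f/\partial x_{ij}$) and re-expanding in the self-dual/anti-self-dual basis, the products $f_{ij}f_{kl}$ enter the self-dual coefficient of each entry symmetrically, so they cancel under the $\wedge$-antisymmetry. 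This is precisely the feature that makes the ansatz produce (anti-)self-dual curvature.

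It then remains to analyze $-\tfrac1f\,dM\wedge dX$, the only term that can contribute a self-dual part. Computing $dM\wedge dX$ entrywise and invoking equality of mixed partials, the coefficients along $dx_{11}\wedge dx_{21}$ and along $dx_{12}\wedge dx_{22}$ cancel, and the surviving self-dual coefficients are all proportional to $f_{11,22}-f_{12,21}=\square_{2,2}f$, where $f_{ij,kl}:=\partial^2 f/\partial x_{ij}\partial x_{kl}$ (the two diagonal entries carry $\pm\tfrac12\square_{2,2}f$ on $dx_{11}\wedge dx_{22}+dx_{12}\wedge dx_{21}$, while the off-diagonal entries carry $\pm\square_{2,2}f$ on $dx_{12}\wedge dx_{22}$ and $dx_{11}\wedge dx_{21}$). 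Hence the self-dual part of $F(f)$ is $\square_{2,2}f$ times a nowhere-vanishing factor, which gives the proposition.

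The real work is the bookkeeping in the middle step: one must check that the self-dual components of the quadratic term vanish in all four matrix entries, not just one. Two observations cut down the labor. First, $\operatorname{tr}F(f)=0$ identically (indeed $\operatorname{tr}(\omega\wedge\omega)=b\wedge c+c\wedge b=0$ and $\operatorname{tr}A(f)=-d\log f$ is closed), so the curvature is $sl(2,\mathbb{R})$-valued and the two diagonal self-dual coefficients are automatically negatives of one another. Second, the $*$-operator acts pointwise, so at each point $M$ may be frozen to a constant matrix; this reduces the quadratic identity to a finite, coordinate-free linear-algebra check, in the spirit of the algebraic Lemma \ref{AL}.
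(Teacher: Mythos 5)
The paper states this proposition without proof (it is quoted as the known t'Hooft ansatz), so there is no argument of the author's to compare yours against; what you give is a correct, self-contained verification, and the two computational claims it rests on both check out. With $\omega=\frac{\partial f}{\partial X}\,dX=\bigl(\begin{smallmatrix}a&b\\ c&d\end{smallmatrix}\bigr)$, so that $a=f_{11}dx_{11}+f_{21}dx_{21}$, $b=f_{11}dx_{12}+f_{21}dx_{22}$, $c=f_{12}dx_{11}+f_{22}dx_{21}$, $d=f_{12}dx_{12}+f_{22}dx_{22}$ and $df=a+d$, one indeed finds $df\wedge\omega+\omega\wedge\omega=\bigl(\begin{smallmatrix}d\wedge a+b\wedge c& 2a\wedge b\\ 2d\wedge c& a\wedge d+c\wedge b\end{smallmatrix}\bigr)$, and each entry expands over $dx_{11}\wedge dx_{12}$, $dx_{21}\wedge dx_{22}$ and $dx_{11}\wedge dx_{22}-dx_{12}\wedge dx_{21}$ only, hence is ASD; and the self-dual part of $dM\wedge dX$ is, entry by entry, $-\tfrac12\square_{2,2}f\,(dx_{11}\wedge dx_{22}+dx_{12}\wedge dx_{21})$, $-\square_{2,2}f\,dx_{12}\wedge dx_{22}$, $\square_{2,2}f\,dx_{11}\wedge dx_{21}$ and $+\tfrac12\square_{2,2}f\,(dx_{11}\wedge dx_{22}+dx_{12}\wedge dx_{21})$, i.e.\ a fixed nowhere-vanishing matrix of SD forms times $\square_{2,2}f$, which yields both directions of the equivalence since $1/f$ never vanishes on the domain of $A(f)$. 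The only blemish is a loosely worded sentence: the coefficients of $dx_{11}\wedge dx_{21}$ and $dx_{12}\wedge dx_{22}$ cancel (by equality of mixed partials) only in the \emph{diagonal} entries of $dM\wedge dX$; in the off-diagonal entries these are precisely the components where $\square_{2,2}f$ survives, exactly as your own parenthetical records, so the slip does not affect the argument. Your two labor-saving observations (tracelessness of $F(f)$, and freezing $M$ pointwise to reduce the quadratic identity to linear algebra in the spirit of Lemma \ref{AL}) are both valid.
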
 
For the global picture, we need to know  how $A(f)$ is transformed under the change of coordinates. we have
\begin{proposition} \label{TA}
We have
$$\Psi_{g,h}^{*}(A(f))=(k_{21}X+k_{22})A(f\circ \Psi_{g,h})(k_{21}X+k_{22})^{-1}$$

\end{proposition}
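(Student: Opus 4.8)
The plan is to verify the identity by a direct computation, pulling back the defining expression $A(f)=-\frac{1}{f}\frac{\partial f}{\partial X}dX$ and comparing it with the conjugate $(k_{21}X+k_{22})A(f\circ\Psi_{g,h})(k_{21}X+k_{22})^{-1}$. To keep the formulas manageable I would abbreviate $P:=k_{21}X+k_{22}$ and $Q:=k_{11}-(k_{11}X+k_{12})P^{-1}k_{21}$, so that the two transformation lemmas already established read $\Psi_{g,h}^{*}(dX)=Q\,dX\,P^{-1}$ (lemma 3.3, the pullback of $dX$) and $\frac{\partial(f\circ\Psi_{g,h})}{\partial X}=P^{-1}\left(\frac{\partial f}{\partial X}\circ\Psi_{g,h}\right)Q$ (lemma 3.2, the transformation of $\frac{\partial f}{\partial X}$).

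First I would use that $\Psi_{g,h}^{*}$ is an algebra homomorphism of the matrix-valued de Rham complex that commutes with composition on coefficient functions and preserves the left-to-right order of matrix multiplication. Since $\frac{1}{f}$ is scalar while $\frac{\partial f}{\partial X}$ is a matrix of functions, this yields
$$\Psi_{g,h}^{*}(A(f))=-\frac{1}{f\circ\Psi_{g,h}}\left(\frac{\partial f}{\partial X}\circ\Psi_{g,h}\right)\Psi_{g,h}^{*}(dX)=-\frac{1}{f\circ\Psi_{g,h}}\left(\frac{\partial f}{\partial X}\circ\Psi_{g,h}\right)Q\,dX\,P^{-1},$$
substituting lemma 3.3 in the last step.

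Next I would expand the claimed right-hand side directly from the definition of $A$ applied to the composite $f\circ\Psi_{g,h}$: by lemma 3.2,
$$A(f\circ\Psi_{g,h})=-\frac{1}{f\circ\Psi_{g,h}}\frac{\partial(f\circ\Psi_{g,h})}{\partial X}dX=-\frac{1}{f\circ\Psi_{g,h}}P^{-1}\left(\frac{\partial f}{\partial X}\circ\Psi_{g,h}\right)Q\,dX.$$
Conjugating by $P$ then makes the leading $P\cdot P^{-1}$ cancel and leaves exactly the expression obtained above for $\Psi_{g,h}^{*}(A(f))$, which establishes the proposition.

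The argument is entirely mechanical, and I do not expect a genuine analytic obstacle; the only point demanding care is the bookkeeping of the non-commutative matrix factors, namely checking that the pullback respects the order of matrix multiplication and that the scalar $1/f$ may be moved freely past the matrices. The conceptual content is that the matrix $Q$ coming from the tangent map occurs identically in the transformation of $\frac{\partial f}{\partial X}$ and in the pullback of $dX$, while the two stray factors $P^{-1}$ and $P$ are precisely what the conjugation by $P$ supplies and absorbs.
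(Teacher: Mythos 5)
Your computation is correct: the paper states this proposition without any proof, and your argument --- pulling back $A(f)$ entrywise, substituting the transformation laws for $\frac{\partial f}{\partial X}$ and $dX$ from the two preceding lemmas, and observing that conjugation by $k_{21}X+k_{22}$ supplies and absorbs the stray factors $P$ and $P^{-1}$ --- is exactly the derivation the author evidently intends. The only point worth making fully explicit is the one you already flag, namely that $\Psi_{g,h}^{*}\left(\frac{\partial f}{\partial X}\,dX\right)=\left(\frac{\partial f}{\partial X}\circ\Psi_{g,h}\right)\Psi_{g,h}^{*}(dX)$ as a matrix product, which holds because the pullback acts entrywise and matrix multiplication is bilinear over the ring of functions.
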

We would like to obtain a global version of t'Hooft ansatz. Note that if $f:U\to \mathbb{R}$ is a smooth nowhere vanishing function on an open subset of  $M_2(\mathbb{R})$ then the t'Hooft ansatz $A(f)$ gives a smooth solution of $GL(2,\mathbb{R})$-ASDYM equations on $U$. However t'Hooft ansatz depends on the local coordinates. Nevertheless if we pass to appropriate line bundles we obtain a global version of the t'Hooft ansatz. More precisely
  \begin{theorem}
(1) (Global t'Hooft ansatz) There is a unique map 
 $$\phi:\Gamma(Gr,\widetilde{\varepsilon[-1]} )^*\to \mathcal{A}(\mathcal{P})$$
such that on local coordinates it is given by local t'Hooft ansatz. Here $\Gamma(Gr,\widetilde{\varepsilon[-1] })^*$ is the space of nowhere vanishing sections of 
 $\widetilde{\varepsilon[-1]} $ and $\mathcal{A}(\mathcal{P})$ is the space of connections on 
$\mathcal{P}$.\\
(2) For $s\in \Gamma(Gr,\widetilde{\varepsilon[-1]})^*$, the curvature of $\phi(s)$ is ASD if and only if 
$ \square_{2,2}s=0$.
  \end{theorem}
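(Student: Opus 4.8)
The plan is to build $\phi$ chart by chart from the local ansatz and then check that the pieces are compatible with the gluing data of $\mathcal{P}$. Fix a nowhere vanishing section $s\in\Gamma(Gr,\widetilde{\varepsilon[-1]})^*$. Over each coordinate patch $U_g$ it is represented by a nowhere vanishing function $f_g$, and on an overlap $U_g\cap U_h$ the two representatives are related by $f_g=|\det(k_{21}X+k_{22})|^{-1}\,(f_h\circ\Psi_{g,h})$, where $h^{-1}g=\begin{pmatrix}k_{11}&k_{12}\\k_{21}&k_{22}\end{pmatrix}$; this is exactly the rule under which $\square_{2,2}$ descends to the global operator $\widetilde{\varepsilon[-1]}\to\widetilde{\varepsilon[-3]}$. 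I would then define $\phi(s)$ to be the connection on $\mathcal{P}$ whose potential on $U_g$ is the local t'Hooft ansatz $A(f_g)=-\tfrac{1}{f_g}\tfrac{\partial f_g}{\partial X}\,dX$. Uniqueness is immediate, since demanding that $\phi(s)$ be given locally by the t'Hooft ansatz fixes it on every chart.

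The substance of part (1) is to verify that the $A(f_g)$ glue into a genuine connection on $\mathcal{P}$, i.e. that on overlaps they obey the gauge law $A(f_g)=\gamma^{-1}\,\Psi_{g,h}^*(A(f_h))\,\gamma+\gamma^{-1}d\gamma$ for the transition function $\gamma=k_{21}X+k_{22}$ of $\mathcal{P}$. Three inputs combine here. First, Proposition \ref{TA} gives $\Psi_{g,h}^*(A(f_h))=\gamma\,A(f_h\circ\Psi_{g,h})\,\gamma^{-1}$, so the conjugation collapses and the right-hand side becomes $A(f_h\circ\Psi_{g,h})+\gamma^{-1}d\gamma$. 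Second, the ansatz is a logarithmic derivative and hence additive, $A(\mu\nu)=A(\mu)+A(\nu)$ for scalar functions, so the transition rule for $s$ yields $A(f_g)=-A(|\det\gamma|)+A(f_h\circ\Psi_{g,h})$. Matching the two sides thus reduces to the single identity $A(|\det\gamma|)=-\gamma^{-1}d\gamma$. This is the one genuinely computational point, and I expect it to be the main obstacle: it is proved by Jacobi's formula $\tfrac{1}{\det\gamma}\tfrac{\partial\det\gamma}{\partial x_{ij}}=\mathrm{tr}(\gamma^{-1}k_{21}E_{ij})$ (with $E_{ij}$ the matrix units) together with $d\gamma=k_{21}\,dX$, which combine to give $\tfrac{1}{|\det\gamma|}\tfrac{\partial|\det\gamma|}{\partial X}\,dX=\gamma^{-1}k_{21}\,dX=\gamma^{-1}d\gamma$. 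In other words, the pure-gauge term $\gamma^{-1}d\gamma$ demanded by $\mathcal{P}$ is precisely the contribution that the twisting factor $|\det\gamma|$ feeds into the t'Hooft ansatz.

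For part (2) I would argue that both sides are local and chart-independent, so the global equivalence follows from the local t'Hooft ansatz proposition established above. The curvature of $\phi(s)$ is a globally defined $\mathrm{End}(\mathcal{P})$-valued $2$-form, and anti-self-duality is a pointwise condition that makes sense on $Gr$ because $Gr$ carries both a conformal structure and an orientation; on each $U_g$ it restricts to $F(f_g)$, which by the local proposition is ASD if and only if $\square_{2,2}f_g=0$. On the other side, $\square_{2,2}s=0$ means exactly $\square_{2,2}f_g=0$ in every chart, since $\square_{2,2}$ is the global operator $\widetilde{\varepsilon[-1]}\to\widetilde{\varepsilon[-3]}$ whose transformation rule guarantees that the vanishing of $\square_{2,2}f_g$ is consistent across overlaps. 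Hence $F$ is ASD everywhere if and only if $\square_{2,2}s=0$, which completes the proof.
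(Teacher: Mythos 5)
Your proof is correct and takes essentially the same route as the paper: both reduce the gluing condition, via Proposition \ref{TA}, to the single identity $A(|\det(k_{21}X+k_{22})|)=-\pi_{gh}^{-1}d\pi_{gh}$, verified by the derivative-of-determinant computation $\frac{\partial \det(k_{21}X+k_{22})}{\partial X}=\det(k_{21}X+k_{22})(k_{21}X+k_{22})^{-1}k_{21}$, and both deduce part (2) directly from the local ansatz. Your explicit use of the additivity $A(\mu\nu)=A(\mu)+A(\nu)$ is only a cosmetic repackaging of the paper's argument.
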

 \begin{proof}
First  we recall that  constructing a connection on a vector bundle $E$ with local trivializations $E|_{U_i}=U_i\times \mathbb{R}^n$ and transition functions 
$(U_i,\pi_{ij})$ is the same as giving $M_n(\mathbb{R})$-valued 1-forms $A_i$ on $U_i$ such that 
$$A_j=\pi_{ij}^{-1}A_i\pi_{ij}+\pi_{ij}^{-1}d\pi_{ij}$$
on $U_i\cap U_j$. \\
We denote the transition functions of $\mathcal{P}$ by $\pi_{gh}$.\\
  (1) We just need to check that the different local definitions of t'Hooft ansatz match to give a global map $\phi$. This is equivalent to proving that    we have
  $$ A(f_g)=\pi_{gh}^{-1}\Psi_{gh}^{*}(A(f_h))\pi_{gh}+\pi_{gh}^{-1}d\pi_{gh}$$
  when $f_h\circ\Psi_{g,h}=|det(k_{21}X+k_{22})|f_g$.
From proposition \cite{TA} we have,
 $$\Psi_{g,h}^{*}(A(f_h))=\pi_{gh}A(f_h\circ \Psi_{g,h})\pi_{gh}^{-1}$$
 So we need to prove that 
 $$A(|det(k_{21}X+k_{22})|)+\pi_{gh}^{-1}d\pi_{gh}=0\qquad (**)$$
 Clearly $A(|det(k_{21}X+k_{22})|)=A(det(k_{21}X+k_{22}))$ and 
 a simple computation shows that 
 $$\frac{\partial det(k_{21}X+k_{22})}{\partial X}=det(k_{21}X+k_{22}) (k_{21}X+k_{22})^{-1}k_{21}$$
 so 
 $$A(det(k_{21}X+k_{22}))=-(k_{21}X+k_{22})^{-1}k_{21}dX=-\pi_{gh}^{-1}d\pi_{gh}$$
which proves $(**)$.\\
(2) This follows form the local t'Hooft ansatz.
 
 \end{proof}
Therefore in order to produce ASDYM solutions on $\mathcal{P}$ we only need to start form a solution of $\square_{2,2}s=0$ on $\widetilde{\varepsilon[-1]}$. Fortunately there is a classical result which produce all the global solutions. More precisely,    
it is well-known that there is a bijection 
$$\Gamma(\mathbb{R}\mathbb{P}^3,\varepsilon(-2))\to ker(\square_{2,2}:\Gamma(Gr,\widetilde{\varepsilon[-1]})\to\Gamma(Gr,\widetilde{\varepsilon[-3]}))$$
This transform is known as the X-ray transform \cite{Ar,Sp}. One can Identify $\Gamma(\mathbb{R}\mathbb{P}^3,\varepsilon(-2))$ with homogeneous functions on $\mathbb{R}^4-\{0\}$ of degree $-2$. Now, the simplest homogeneous functions on $\mathbb{R}^4-\{0\}$ of degree $-2$ is $\frac{1}{x_1^2+x_2^2+x_3^2+x_4^2}$. Under the X-ray transform this function goes to a solution $f_0\in \Gamma(Gr,\widetilde{\varepsilon[-1]} )^*$of $\square_{2,2}$ which in local coordinates is given by 
$$f_0(X)=\frac{1}{\sqrt{det(1+X^tX)}}$$
see \cite{Sp}.  As we saw this solution of  $\square_{2,2}s=0$ gives a solution to the ASDYM equations on $Gr$. In local coordinates this solution is just $A(f_0)$.  A simple computation shows that
$$A(f_0)=\frac{-1}{2}A(det(1+X^tX))=(1+X^tX)^{-1}X^tdX$$
which is just the basic split anti-instanton.   We just summarize this discussion in the following corollary
\begin{corollary}
The t'Hooft ansatz applied to $\frac{1}{x_1^2+x_2^2+x_3^2+x_4^2}$ yields the basic split anti-instanton. Moreover all the split anti-instantons of topological charge $-1$ can be obtained via the t'Hooft ansatz.
\end{corollary}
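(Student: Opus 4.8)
The plan is to treat the two assertions separately: the first is an immediate consequence of the computation carried out just above the corollary, while the second rests on the structure theorem of Section 2 together with an equivariance property of the global t'Hooft ansatz.

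For the first assertion I would simply chain together the pieces already assembled. The homogeneous degree $-2$ function $1/(x_1^2+x_2^2+x_3^2+x_4^2)$ on $\mathbb{R}^4\setminus\{0\}$ represents a section of $\varepsilon(-2)$ on $\mathbb{R}\mathbb{P}^3$; its X-ray transform is the nowhere-vanishing section $f_0\in\Gamma(Gr,\widetilde{\varepsilon[-1]})^{*}$ given in local coordinates by $f_0(X)=1/\sqrt{\det(1+X^tX)}$. Feeding $f_0$ into the global ansatz $\phi$ and using $A(f_0)=-\frac{1}{2}A(\det(1+X^tX))=(1+X^tX)^{-1}X^t dX$ reproduces the basic split anti-instanton exactly, so the first statement holds with no further work.

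For the second assertion I would argue by equivariance. By the moduli theorem of Section 2, every split anti-instanton of charge $-1$ is, up to gauge, the conformal pullback $g^{*}\nabla_0$ of the basic split anti-instanton $\nabla_0$ under some $g\in SL(4,\mathbb{R})$, and as $g$ runs over $SL(4,\mathbb{R})$ these pullbacks fill out the entire moduli space $SL(4,\mathbb{R})/SO(4)$ (the stabiliser being $SO(4)$ by Proposition \ref{IS}). Since $\nabla_0=\phi(f_0)$ with $f_0$ the X-ray transform of $h_0:=1/(x_1^2+\cdots+x_4^2)$, the claim reduces to showing that $\phi$ intertwines the conformal action, i.e. $g^{*}\phi(s)=\phi(g^{*}s)$, and that $g^{*}s$ stays in $\ker\square_{2,2}$. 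The second point is free: $\square_{2,2}$ was constructed as a global operator $\widetilde{\varepsilon[-1]}\to\widetilde{\varepsilon[-3]}$, so its kernel is automatically $SL(4,\mathbb{R})$-stable. On the source side the X-ray transform carries the $SL(4,\mathbb{R})$-action on $\Gamma(\mathbb{R}\mathbb{P}^3,\varepsilon(-2))$ to the action on solutions of $\square_{2,2}$, sending $h_0$ to $1/Q_g$, where $Q_g(x)=|g^{-1}x|^2$ is a positive-definite quadratic form of unit determinant; the orbit of $h_0$ is thus exactly the space of such forms, another copy of $SL(4,\mathbb{R})/SO(4)$. Matching this orbit with the moduli space through $\phi$ then exhibits each charge $-1$ split anti-instanton as $\phi(1/Q_g)$, i.e. as a t'Hooft ansatz.

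The hard part is the naturality identity $g^{*}\phi(s)=\phi(g^{*}s)$. The transformation law in Proposition \ref{TA} already records how the local ansatz $A(f)$ behaves under the chart transitions $\Psi_{g,h}$, and the gluing in the global-ansatz theorem was engineered precisely so that these local pieces cohere; the task is to check that the identical transformation law, applied now to the conformal diffeomorphisms coming from $SL(4,\mathbb{R})$ rather than to bare chart changes, identifies $g^{*}\phi(s)$ with $\phi$ of the conformally transported section. Equivalently, one verifies the $SL(4,\mathbb{R})$-equivariance of the X-ray transform directly, in the same spirit as the equivariance of $R$ in Theorem \ref{GS}, and couples it with the gauge-invariance of the construction. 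With naturality in hand the parameter count closes: both the orbit of $h_0$ and the moduli space are $SL(4,\mathbb{R})/SO(4)$, so surjectivity is automatic and no charge $-1$ split anti-instanton escapes the ansatz.
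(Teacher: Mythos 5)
Your proposal is correct and follows essentially the same route as the paper: the first assertion is read off from the computation $A(f_0)=(1+X^tX)^{-1}X^t\,dX$ preceding the corollary, and the second combines the identification of the moduli space with $SL(4,\mathbb{R})/SO(4)$ and the $SL(4,\mathbb{R})$-equivariance of the X-ray transform and the global ansatz, exactly as in the paper's discussion of the forms $1/Q(x)$ immediately following the corollary. Your treatment is in fact somewhat more careful than the paper's, since you isolate the naturality identity $g^{*}\phi(s)=\phi(g^{*}s)$ as the point requiring verification, whereas the paper passes over it with ``one can see that.''
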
   
If we start with any positive quadric form $Q$ on $\mathbb{R}^4$, then $\frac{1}{Q(x)}$ gives a  global section of $ \varepsilon(-2)$. One can see that the t'Hooft ansatz applied to this section gives a split anti-instanton of topological charge $-1$. On the other hand $SL(4,\mathbb{R})$ acting on the standard positive quadric form on $\mathbb{R}^4$ gives all the positive quadratic forms and $SO(4)$ leaves it invariant. In summary, we see that the t'Hooft ansatz provides an $SL(4,\mathbb{R})$-equivariant isomorphism between the space of  of positive quadratic forms on $\mathbb{R}^4$ and the moduli space of split anti-instantons of topological charge $-1$.\\
Finally, we note that similar formulas give rise to the Euclidean t'Hooft ansatz. But there is a big difference between the two signatures. In the Euclidean case we have to start with singular solutions and the topological charge of the solution depends on the kind of singularity of the solution. In the split case, we start with global solutions and the topological charge is always $-1$.

\end{subsection}

\end{section}

\begin{section}{Split ADHM construction}
In the Euclidean case, the so-called ADHM construction gives all the solutions to $Sp(n)$-(A)SDYM equations. 
The similarity between our construction of the basic split instanton  and basic instanton suggests that the ADHM construction of multi-instantons must have a counterpart in the split case and in fact this is the case.  In this section we explain the split ADHM construction. \\  
First we recall the ADHM construction of multi-instantons of charge $k$ and structure  group $Sp(n)$, see \cite{At}. Given $n+k$ by $k$ quaternionic matrices $A$ and $B$ we consider the following $\mathbb{H}$-linear maps 
$$v(X,Y)=AX+BY:\mathbb{H}^k\to\mathbb{H}^{k+n}$$
Moreover we have  a non-degeneracy condition which is 
$$v(X,Y)\quad \text{has  maximal rank for any} \quad (X,Y)\neq(0,0)$$ 
This non-degeneracy condition implies that 
the co-kernels of these maps define a quaternionic vector bundle $E$ on $\mathbb{H}\mathbb{P}^1$. 
Consider the following quaternionic inner product on $\mathbb{H}^{k+n}$,
$$( \begin{pmatrix}
 a_1\\
 a_2\\
  \vdots\\
 a_{n+k}\\
  \end{pmatrix},\begin{pmatrix}
 b_1\\
 b_2\\
  \vdots\\
 b_{n+k}\\
  \end{pmatrix})=\bar{b}_1a_1+\bar{b}_2a_2+\dots+\bar{b}_{n+k}a_{n+k}.
  $$
Using this inner product we can consider the projection of the trivial connection onto $E$ where we identify $E$ with the orthogonal complement of the image bundle of $v(X,Y)$.
Then we have the following observation:
\newline
The projection of the trivial connection onto $E$ is ASD if and only if for any $X\in\mathbb{H}$
$$[(AX+B)^*(AX+B)]^{-1}$$
is real, where $A^*$ is defined by $(A^*)_{ij}=\bar{A}_{ji}$. This condition is equivalent to the condition that  $B$ and $A^*A+B^*B$ are symmetric as matrices over quaternions. Furthermore all the solutions are of this form. \\
The split ADHM construction is basically the same construction except the quaternionic conjugation operation is replaced by matrix transpose operation. 
More precisely, given  $n+k$ by $k$ split quaternionic matrices $A$ and $B$ we consider maps 
$$v(X,Y)=AX+BY:\mathbb{R}^{2k}\to\mathbb{R}^{2k+2n}$$
Note that here we identify an $m$ by $n$ matrix of split quaternions with a $2m$ by $2n$ matrix over real numbers under the realization $\mathbb{H}_{\mathbb{R}}=M_2(\mathbb{R})$.
We now impose the following  non-degeneracy condition
$$v(X,Y)\quad \text{has maximal rank for any} \quad [X:Y]\in\mathbb{H}_{\mathbb{R}}\mathbb{P}^1.$$ 
Then the images of $v(X,Y)$ which only depend on $ [X:Y]\in\mathbb{H}_{\mathbb{R}}\mathbb{P}^1$ define a vector subbundle of the trivial bundle on $\mathbb{H}_{\mathbb{R}}\mathbb{P}^1$.   Consider the usual inner product on $\mathbb{R}^{2n+2k}$ which is  
$$( \begin{pmatrix}
 a_1\\
 a_2\\
  \vdots\\
 a_{2n+2k}\\
  \end{pmatrix},\begin{pmatrix}
 b_1\\
 b_2\\
  \vdots\\
 b_{2n+2k}\\
  \end{pmatrix})=a_1b_1+a_2b_2+\dots+a_{2n+2k}b_{2n+2k}.
  $$
  We denote the orthogonal complement of the vector bundle defined by the images of $v(X,Y)$ by $E$.  Finally, we say $a\in \mathbb{H}_{\mathbb{R}}$ is split real if $a^t=a$
 \begin{proposition}
The projection of the trivial connection onto $E$ is an $O(2n)$-SD connection if and only if for any $X\in\mathbb{H}_\mathbb{R}$, 
$$[(AX+B)^T(AX+B)]^{-1} $$
is split real, i.e. its entries are split real quaternions. Here $A^{T}$ is defined as $(A^{T})_{ij}=A^t_{ji}$, in other words $A^T$ is just the transpose of $A$ considered as a $2n+2k$ by $2k$ real matrix. 
\end{proposition}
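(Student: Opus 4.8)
The plan is to run the Euclidean ADHM curvature computation line by line, replacing the quaternionic conjugate-transpose $*$ by the real transpose $T$ throughout, and then to extract the self-duality condition from Lemma \ref{AL}(a) rather than from Lemma \ref{AL}(c). It is exactly this replacement --- $dX^t$ in place of $d\tilde X$ --- that converts the Euclidean anti-self-duality into self-duality and the reality condition into the split-reality condition, so most of the work is organizational.

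First I would pass to the affine chart $[X:1]$, where $v=v(X)=AX+B$ is an injective real $(2n+2k)\times 2k$ matrix; its image is the fibre of the image bundle and $E$ is the orthogonal complement. Choosing a local orthonormal real frame $U$ for $E$ (so $U^TU=I_{2n}$ and $v^TU=0$), the orthogonal projection $P=UU^T$ visibly preserves the standard inner product on the rank-$2n$ bundle $E$, so the projected connection is automatically an $O(2n)$-connection; the real content of the proposition is the self-duality of its curvature. Writing the potential as $U^TdU$ and using $U^TU=I$, $v^TU=0$ and $d^2=0$ exactly as in the Euclidean case, I get the curvature formula
$$F=U^T\,dv\wedge(v^Tv)^{-1}\,dv^T\,U.$$
Substituting $dv=A\,dX$ and $dv^T=dX^t\,A^T$ and setting $W=U^TA$, $R=(v^Tv)^{-1}$, this becomes, entrywise,
$$F_{pq}=\sum_{j,l}W_{pj}\,(dX\wedge R_{jl}\,dX^t)\,(W^T)_{lq}.$$
Nothing split-specific has entered yet: this is real linear algebra with $*$ replaced by $T$.

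Next I would analyse self-duality entrywise. Splitting each split-quaternion entry $R_{jl}$ into its symmetric and antisymmetric parts, Lemma \ref{AL}(a) says that $dX\wedge R_{jl}\,dX^t$ is SD precisely for the symmetric (split real) part, while a one-line check shows an antisymmetric entry (a multiple of $\bigl(\begin{smallmatrix}0&1\\-1&0\end{smallmatrix}\bigr)$) yields a purely ASD form. Since the SD/ASD splitting is real-linear and is therefore preserved by the two-sided multiplication by the constant split quaternions $W_{pj}$ and $(W^T)_{lq}$, I obtain $F^{\mathrm{ASD}}_{pq}=\sum_{j,l}W_{pj}\,(dX\wedge R^{\mathrm{skew}}_{jl}\,dX^t)\,(W^T)_{lq}$. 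This already gives the easy implication: if $R=(v^Tv)^{-1}$ is split real then $R^{\mathrm{skew}}=0$, hence $F^{\mathrm{ASD}}=0$ and $F$ is SD.

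The main obstacle is the converse, namely that $F^{\mathrm{ASD}}=0$ forces $R^{\mathrm{skew}}=0$ --- i.e. the antisymmetric part of $R$ cannot be annihilated by the sandwiching $W(\,\cdot\,)W^T$. This is where the non-degeneracy hypothesis is indispensable and where I expect the genuine work to lie, exactly as in Atiyah's Euclidean proof that the self-dual part vanishes only when $v^*v$ is real. Pointwise in $X$, I would use that $v^Tv$ (hence $R$) is a real symmetric $2k\times 2k$ matrix, so $R^T=R$ ties $R_{jl}$ to $R_{lj}$ and makes the diagonal entries automatically split real; combined with the maximal-rank condition on $v$, which guarantees that $U$ and the columns of $A$ together span enough of $\mathbb{R}^{2n+2k}$ to detect a nonzero off-diagonal skew component, this shows the map $R^{\mathrm{skew}}\mapsto F^{\mathrm{ASD}}$ is injective. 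Establishing that injectivity cleanly is the crux; once it is in hand, $F$ SD $\iff R^{\mathrm{skew}}=0\iff(v^Tv)^{-1}$ split real, which is the assertion.
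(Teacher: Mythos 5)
Your proposal follows essentially the same route as the paper: pass to the chart $[X:1]$, derive the curvature of the projected connection in the form $F=(u^Tu)^{-1}u^T\bigl(dv\wedge(v^Tv)^{-1}dv^T\bigr)u$, substitute $dv=A\,dX$ to get $F=(u^Tu)^{-1}u^TA\bigl(dX\wedge(v^Tv)^{-1}dX^t\bigr)A^Tu$, and invoke Lemma \ref{AL}(a) to read off self-duality from the symmetry of the entries of $(v^Tv)^{-1}$. In fact you are more careful than the paper: the converse direction you single out as the crux --- that vanishing of $F^{\mathrm{ASD}}$ forces $R^{\mathrm{skew}}=0$, i.e.\ that the sandwiching by $u^TA$ and $A^Tu$ cannot annihilate the antisymmetric part --- is exactly the step the paper passes over with the bare assertion ``this implies that $F$ is SD if and only if $(v^Tv)^{-1}$ is split real,'' so you have not missed anything the paper supplies, though a fully rigorous account would still need that injectivity argument (as in Atiyah's Euclidean treatment, using the non-degeneracy condition).
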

\begin{proof}
The proof is essentially the same as the Euclidean case, see \cite{At}. In fact if $u$ (which is considered to be a $2n+2k$ by $2n$ real matrix) is a local frame for $E$ and $v$ (which is considered to be a $2n+2k$ by $2k$ real matrix) is a local frame for the orthogonal complement of $E$ then one can see that the matrix of the curvature of the connection on $E$ in the local frame $u$ is given by
$$F=(u^Tu)^{-1}u^T(dv\wedge(v^Tv)^{-1}dv^T)u$$ 
Now on local coordinates $\{[X:1]|X\in \mathbb{H}_{\mathbb{R}}\}$ we can choose $v=AX+B$. Therefore
$$F=(u^Tu)^{-1}u^T(d(AX+B)\wedge(v^Tv)^{-1}d(X^tA^T+B^T))u$$
$$=(u^Tu)^{-1}u^TA(dX\wedge(v^Tv)^{-1}dX^t)A^Tu$$
This implies that $F$ is SD if and only if 
$$(v^Tv)^{-1}=[(AX+B)^T(AX+B)]^{-1} $$
 is split real for any $X$.

\end{proof}
A few remarks are in order. We note that the condition to obtain solutions is that $[(AX+B)^T(AX+B)]^{-1}$ is split real for any $X$. In contrast with the Euclidean case we cannot conclude that $[(AX+B)^T(AX+B)]$ is split real for every $X$ (except when $k=1$). It does not seem to be easy to simplify this condition to some conditions on $A$ and $B$ as in  the Euclidean case. Moreover, it is not clear if there are matrices $A$ and $B$ with this  property. Nevertheless, we note that, for $k=1$, this condition is always fulfilled. In particular, $n=k=1$, the above theorem yields all the basic instantons of topological charge $1$. \\
There are many questions remain to be answered here. Two main ones are
 (1) What is the moduli space of the solutions obtained via the split ADHM construction?\\
  (2) How can one characterize the solutions coming from the split ADHM construction?\\
The split ADHM   construction has a complex analog.  More precisely,  Given  $n+k$ by $k$ complex quaternionic matrices $A$ and $B$ we consider maps 
$$v(X,Y)=AX+BY:\mathbb{C}^{2k}\to\mathbb{C}^{2k+2n}$$
 Note that here we identify an $m$ by $n$ matrix of complex quaternions with a $2m$ by $2n$ matrix over complex numbers under the realization $\mathbb{H}_{\mathbb{C}}=M_2(\mathbb{C})$.
We now impose the following  non-degeneracy condition 
$$v(X,Y)\quad \text{has maximal rank for any} \quad [X:Y]\in\mathbb{H}_{\mathbb{C}}\mathbb{P}^1.$$ 
Then the images of $v(X,Y)$ which only depend on $ [X:Y]\in\mathbb{H}_{\mathbb{C}}\mathbb{P}^1$ define a holomorphic vector sub-bundle of the trivial bundle on $\mathbb{H}_{\mathbb{R}}\mathbb{P}^1$.  We consider the standard Hermitian product on $\mathbb{C}^{2k+2n}$. We denote the orthogonal complement of the vector bundle defined by the images of $v(X,Y)$ by $\mathcal{E}(A,B)$. Let $\nabla(A,B)$ be the projection of the trivial connection on $\mathbb{C}^{2k+2n}$ onto $\mathcal{E}(A,B)$. Computations similar to the above proposition shows that the curvature of $\nabla$ is of type $(1,1)$.  Therefore there is a holomorphic structure on $\mathcal{E}(A,B)$ defined by $\nabla^{0,1}$.  Therefore the complex ADHM construction gives rise to a family of holomorphic vector bundles equipped with a unitary structure.  We see that the holomorphic structure and the unitary structure uniquely determine $\nabla(A,B)$ because 
 it is well-known that if we have a unitary structure on a holomorphic vector bundle $V$ then there is a unique smooth connection $V$ which is compatible with both the holomorphic structure and unitary structure, see \cite{At}.  The importance of the complex picture is that it relates the Euclidean SDYM equations and split ASDYM equations.  Here we explain this connection for the case $n=k=1$ but we believe that a similar picture holds higher ranks and charges. 
 Suppose that $V$ is a holomorphic vector bundle on $\mathbb{H}_{\mathbb{C}}\mathbb{P}^1$. We denote the moduli space of unitary structures on $V$ by $\mathcal{U}_V$ where to unitary structure considered to be equivalent if there is a holomorphic isomorphism of $V$ sending one of them to the other one.  Let $\mathcal{P}$ be the universal vector bundle of $\mathbb{H}_{\mathbb{C}}\mathbb{P}^1$ and $\mathcal{Q}:=\mathbb{C}^4/\mathcal{P}$. Then
 we can embed the moduli space of anti-instantons of topological charge $1$ and the moduli space of split instantons of topological charge $-1$ into $\mathcal{U}_{\mathcal{Q}}$ in a natural way.  To see this, we just use the ADHM construction in all cases. \\
This already relates the (A)SDYM equations with the theory of holomorphic vector bundles on complex 4-manifolds. This also opens a way to find a finite dimensional moduli space of solutions in the split case. Even though $\mathcal{U}_V$ is infinite dimensional, it seems possible to find a finite dimensional moduli space of unitary structures in a natural way.   In any case, it seems that there is a deep relation between the Euclidean, split and Complex pictures which needs to be studied thoroughly.

\end{section}

\end{document}